\numberwithin{equation}{section}
\newtheorem{theorem}{Theorem}[section]
\newtheorem{proposition}[theorem]{Proposition}
\newtheorem{lemma}[theorem]{Lemma}
\newtheorem{corollary}[theorem]{Corollary}
\newtheorem{prob*}{Problem}
\newtheorem{question}[theorem]{Question}
\newtheorem{questions}[theorem]{Questions}
\newtheorem*{theorem*}{Theorem}
\theoremstyle{definition}
\newtheorem{notation}[theorem]{Notation}
\newtheorem{example}[theorem]{Example}
\newtheorem{remark}[theorem]{Remark}
\newtheorem{thmx}{Theorem}
\newtheorem{conjx}[thmx]{Conjecture}
\numberwithin{equation}{section}
\definecolor{green}{rgb}{.1,.75,.1}
\tikzset{mycolor/.style = {line width=1bp,color=#1}}%
\tikzset{myfillcolor/.style = {draw,fill=#1}}%
\NewDocumentCommand{\highlight}{O{blue!40} m m}{%
	\draw[mycolor=#1] (#2.north west)rectangle (#3.south east);
}
\NewDocumentCommand{\fhighlight}{O{blue!40} m m}{%
	\draw[myfillcolor=#1] (#2.north west)rectangle (#3.south east);
}
\newcommand{\ZZ}{ \ensuremath{\mathbb{Z}}}
\newcommand{\CC}{ \ensuremath{\mathbb{C}}}
\newcommand{\PP}{ \ensuremath{\mathbb{P}}}
\newcommand{\reg}{\ensuremath{\mathrm{reg}}\hspace{1pt}}
\newcommand{\Tor}{\ensuremath{\mathrm{Tor}}\hspace{1pt}}
\newcommand{\comp}[2]{$\mathbf{#1}\quad{#2}$}
\definecolor{MyDarkGreen}{cmyk}{0.7,0,1,0}
\def\cocoa{{\hbox{\rm C\kern-.13em o\kern-.07em C\kern-.13em o\kern-.15em A}}}
\patchcmd{\subsection}{-.5em}{.5em}{}{}
\patchcmd{\subsection}{2}{3}{}{}
\newsavebox\myboxA
\newsavebox\myboxB
\newlength\mylenA
\newcommand*\xoverline[2][0.75]{%
    \sbox{\myboxA}{$\m@th#2$}%
    \setbox\myboxB\null% Phantom box
    \ht\myboxB=\ht\myboxA%
    \dp\myboxB=\dp\myboxA%
    \wd\myboxB=#1\wd\myboxA% Scale phantom
    \sbox\myboxB{$\m@th\overline{\copy\myboxB}$}%  Overlined phantom
    \setlength\mylenA{\the\wd\myboxA}%   calc width diff
    \addtolength\mylenA{-\the\wd\myboxB}%
    \ifdim\wd\myboxB<\wd\myboxA%
       \rlap{\hskip 0.5\mylenA\usebox\myboxB}{\usebox\myboxA}%
    \else
        \hskip -0.5\mylenA\rlap{\usebox\myboxA}{\hskip 0.5\mylenA\usebox\myboxB}%
    \fi}
\begin{document}

\title{Intersection of curves in $\PP^4$}
\date{January 19, 2026}

\author[\tiny Chiantini]{Luca Chiantini}
\address[L.~Chiantini]{Dipartmento di Ingegneria dell'Informazione e Scienze Matematiche, Universit\`a di Siena, Italy}
\email{luca.chiantini@unisi.it}

\author[Farnik]{{\L}ucja Farnik}
\address[{\L}.~Farnik]{Department of Mathematics, University of the National Education Commission, Krakow,
   Podcho\-r\c a\.zych~2,
   PL-30-084 Krak\'ow, Poland}
\email{lucja.farnik@gmail.com}

\author[Favacchio]{Giuseppe Favacchio}
\address[G.~Favacchio]{Dipartimento di Ingegneria, Universit\`a degli studi di Palermo,
Viale delle Scienze,  90128 Palermo, Italy}
\email{giuseppe.favacchio@unipa.it}

\author[Harbourne]{Brian Harbourne}
\address[B.~Harbourne]{Department of Mathematics,
University of Nebraska,
Lincoln, NE 68588-0130 USA}
\email{brianharbourne@unl.edu}

\author[Migliore]{Juan Migliore} 
\address[J.~Migliore]{Department of Mathematics,
University of Notre Dame,
Notre Dame, IN 46556 USA}
\email{migliore.1@nd.edu}

\author[Szemberg]{Tomasz Szemberg}
\address[T.~Szemberg]{Department of Mathematics, University of the National Education Commission, Krakow,
   Podcho\-r\c a\.zych~2,
   PL-30-084 Krak\'ow, Poland}
\email{tomasz.szemberg@gmail.com}

\author[Szpond]{Justyna Szpond}
\address[J.~Szpond]{Department of Mathematics, University of the National Education Commission, Krakow,
   Podcho\-r\c a\.zych~2,
   PL-30-084 Krak\'ow, Poland}
\email{szpond@gmail.com}

\begin{abstract}
Given positive integers $n$, $d_1$ and $d_2$, it is natural to seek a sharp upper bound for the number of intersection points of reduced, irreducible curves $C_1$ and $C_2$ of those degrees in $\PP^n$. When $n=2$ this is trivial. When $n=3$ this problem was solved independently by Diaz and by Giuffrida. They showed that two curves achieving the maximum number of intersection points have to be rational curves on a smooth surface of minimal degree, i.e., a quadric surface. As such, these curves are far from being arithmetically Cohen-Macaulay (ACM). It is noteworthy that Hartshorne and Mir\'o-Roig also addressed this problem for curves in $\PP^3$ under the assumption that the curves {\it are} ACM; this latter paper produced very different results from those of Diaz and Giuffrida, and introduced very deep techniques. Diaz and Giuffrida also gave initial results when $n > 3$. In this note we continue this study when $n=4$. We introduce an invariant, $B(d_1,d_2)$.  We prove that when both curves lie on a surface of minimal degree (now a cubic surface) then $|C_1 \cap C_2| \leq B(d_1,d_2)$, and for any $d_1,d_2$ sharp examples exist (although only one of our two curves is rational). We conjecture that $B(d_1,d_2)$ is always an upper bound, among arbitrarily chosen irreducible curves of the given degrees. We prove this conjecture in many cases; one of these is when at least one of the two curves is ACM, giving a link to the work of Hartshorne and Mir\'o-Roig. Our approach focuses on the genera of $C_1$, $C_2$ and especially the genus, $g$, of $C_1 \cup C_2$. We introduce a second invariant, $B_g(d_1,d_2)$, which is always an upper bound; however, it may happen that $B_g(d_1,d_2) > B(d_1,d_2)$ so in these cases this approach does not lead immediately to a proof of the conjecture for the degrees $d_1,d_2$ in question. We give some situations where we can get around this problem, one being the ACM situation just mentioned. We also show, when $d_1, d_2$ are both odd, that smooth, irreducible, rational  curves achieving $|C_1 \cap C_2| = B(d_1,d_2)$ exist {\it not} lying on a cubic surface, giving a stark contrast to the situation in $\PP^3$. 
Our main approach is to study the $h$-vector and the geometry of the general hyperplane section $\Gamma$ of $C_1 \cup C_2$. Finally, using a lifting result of Huneke and Ulrich, we make conclusions about what surfaces may contain $C_1 \cup C_2$.
\end{abstract}

\maketitle

\setcounter{tocdepth}{1}
\tableofcontents

\section{Introduction}

In this work, a \emph{curve} refers to a reduced, equidimensional variety of dimension~$1$. Throughout the paper, we assume that $C_1$ and $C_2$ are reduced, irreducible, and nondegenerate curves of degrees $d_1$ and $d_2$, respectively. A curve is {\it arithmetically Cohen-Macaulay (ACM)} if its coordinate ring is Cohen-Macaulay, and several cohomological criteria exist to determine this. We are interested in finding an upper bound for the number, $|C_1 \cap C_2|$  (counted without multiplicity), of intersection points of curves $C_1,C_2$ of degrees $d_1,d_2$ in $\PP^n$ over the complex numbers. The following natural questions  have been addressed by various authors (as we will see).
 
\begin{questions} \label{questions}
\ 
    \begin{enumerate}
    
    \item \label{Q1}  What kinds of upper bounds can be given  for $|C_1\cap C_2|$ and how do they depend on the dimension, $n$, of the ambient projective space?
    
        \item \label{Q2} Given $n$, is there a sharp bound that depends only on $d_1$ and $d_2$?
        
        \item \label{Q3} If $C_1$ and $C_2$ attain the maximal number of intersection points, what additional geometric or algebraic constraints must they satisfy?
        
        \item \label{Q4} What roles do the arithmetic genera of the curves play?
    \end{enumerate}
\end{questions}

The following two additional questions were motivated by analogous questions posed by Hartshorne and Mir\'o-Roig \cite{hartshorne2015} for ACM curves in $\PP^3$.

{\it 
\begin{enumerate} \setcounter{enumi}{4}

     \item \label{Q5} If two curves have the maximum number of intersection points, does this force the two curves to lie in a common surface of minimal degree? 
    
    \item \label{Q6} If the maximum is attained, is the union of the two curves necessarily arithmetically Cohen-Macaulay? 
    
\end{enumerate}
}

\noindent Quoting Hartshorne and Mir\'o-Roig,
\begin{center}
    {\it ``There seems to be scant attention to these questions in the literature.'' }
\end{center}

\noindent  Our goal is to address all these questions in the case of curves in $\PP^4$. We begin here with a quick review of the known results. 
 
If the ambient space is $\mathbb{P}^2$, then of course the set $C_1 \cap C_2$ may contain up to $d_1d_2$ points, with the upper bound being achieved when the curves intersect transversally. The arithmetic genus of either curve does not play a relevant role here. 

Several papers have the aim of obtaining results in higher dimensional projective spaces \cite{diaz}, \cite{Giu86}, \cite{Abhyankar1990}, \cite{Abhyankar1991}, \cite{Miro-Roig2005}, \cite{hartshorne2015}.
 
Turning to the details, for curves $C_1,C_2$ in $\mathbb{P}^3$, Diaz \cite{diaz} and, independently, Giuffrida \cite{Giuffrida86} established the upper bound:
\begin{equation}\label{eq: DG bound in P3}
|C_1\cap C_2| \leq (d_1-1)(d_2-1)+1.
\end{equation}
Furthermore, they showed that this bound is achieved if and only if both curves lie on the same smooth quadric, in particular a surface of minimal degree in $\mathbb{P}^3$ and have arithmetic genus 0, with divisor types $(d_1-1,1)$ and $(1,d_2-1)$. 
Moving beyond this, Giuffrida \cite{Giu86}  showed that the maximum intersection for curves on a cubic surface in $\PP^3$ but not on a quadric is on the order of $\frac{5}{9} d_1d_2$.

The approach of Abhyankar, Chandrasekar and Chandru \cite{Abhyankar1990, Abhyankar1991} was different from the approach of the other papers on this topic, in that they focused on curves not contained in hypersurfaces of unexpectedly low degree, and they found an application to computer vision problems. On the other hand, as will be seen in a moment, curves on hypersurfaces of unexpectedly low degree are important because this is the setting where sharp examples arise. 

Continuing in $\PP^3$, the paper \cite{Miro-Roig2005} of Mir\'o-Roig and Ranestad posed an interesting variant of this problem,  bounding the number of intersection points of two {\it ACM} curves under the assumption that for both curves, the defining ideals  are given by the maximal minors of  homogeneous matrices {\it with linear entries}. 
Then the paper \cite{hartshorne2015} by Hartshorne and Mir\'o-Roig  extended this,  looking for the maximum number of intersection points of two reduced, irreducible ACM curves in $\PP^3$ with no restrictions on the defining matrix. They heavily  used results from liaison theory, and the approach of their paper was through a 
careful study of the $h$-vectors of $C_1, C_2$ and $C_1 \cup C_2$. This is a classical approach and is also at the heart of this paper. One of our main results is in the context of ACM curves in $\PP^4$ (see Theorem~\ref{theorem d}).

The situation for curves in higher-dimensional projective spaces $\mathbb{P}^n$ is much less clear. Giuffrida established the following bound in \cite[Theorem]{giuffrida}:
\begin{equation}\label{eq: G bound in Pn}
|C_1\cap C_2|  \leq (d_1-n+2)(d_2-n+2)+n-2,
\end{equation}
which, for $n=3$, coincides exactly with \eqref{eq: DG bound in P3}.
Moreover, Giuffrida demonstrated in \cite[Example]{giuffrida} that the bound in \eqref{eq: G bound in Pn} is sharp and is attained when $C_1$ and $C_2$ are rational normal curves (of degree $n$) meeting in $n+2$ points.

Diaz also addressed curves in $\mathbb{P}^n$ \cite[(13) Corollary]{diaz}, giving the bound
\[
| C_1 \cap C_2 | \leq (d_1 -n +1)d_2
\]
under the assumption that $C_1$ (only) is irreducible and nondegenerate, no component of $C_2$ is a line or is equal to $C_1$, and $d_1 \geq n+2$.

Thus, in the case of central interest in this work -- namely, that of curves in $\PP^4$ -- the number of intersection points of two curves $C_1$ and $C_2$ is bounded above by the function $B_{DG}(d_1,d_2)$, which we refer to as the \emph{Diaz-Giuffrida bound}: 
\begin{equation}\label{eq: Diaz G}
B_{DG}(d_1,d_2)=
\left\{
\begin{array}{ll}
   (d_1-2)(d_2-2)+2  & \mbox{if }  d_1\leq 5 \mbox{ or } d_2\leq 5,\\
   & \\
   \min\{(d_1-2)(d_2-2)+2, (d_1-3)d_2, d_1(d_2-3)\} & \mbox{otherwise.}
\end{array}
\right.
\end{equation}
Our goal is to improve this bound. To this end, we introduce the following two functions:
\begin{equation}\label{eq: our bound}
B(d_1,d_2)=\left\{
\begin{array}{ccl}
     \dfrac{d_1(d_2-1)}{2} & \rm{ if } &  \mbox{\textbf{both even} and } d_1\leq d_2\\
     &&\mbox{or } d_1 \mbox{ \textbf{even}, } d_2\mbox{ \textbf{odd}},\\[10pt]
     \dfrac{(d_1-1)d_2}{2} & \rm{ if } & \mbox{\textbf{both even} and } d_1\geq d_2\\
     &&\mbox{or } d_1 \mbox{ \textbf{odd}, } d_2\mbox{ \textbf{even}},\\[10pt]
    \dfrac{(d_1-1)(d_2-1)}{2}+1 & \rm{ if } & \mbox{\textbf{both odd}}.
\end{array}
\right.
\end{equation}
Furthermore, setting $d = d_1 + d_2 = 4k + \ell$ for $\ell \in \{0,1,2,3\}$, we define:
\begin{equation}\label{eq: genus bound}
B_g(d_1,d_2)=\left\{
\begin{array}{ccc}
    \dfrac{d^2-4d}{8}+2 && \ell=0, \\[10pt]
    \dfrac{d^2-4d+3}{8}+1  && \ell =1 \mbox{ or } \ell=3,\\[10pt]
    \dfrac{d^2-4d+4}{8}+1 && \ell=2.
\end{array}
\right.    
\end{equation}
At this point, the motivation behind the definitions of $B$ and $B_g$ may not be immediately apparent. However, their relevance will become clearer in the course of the paper 
(see Notation 
\ref{not: genus bound Bg} and  \Cref{{cor: genus bound explicit}} for how \eqref{eq: genus bound} is derived). 
For now, we note that $B(d_1, d_2)$ is  asymptotically half the Diaz-Giuffrida bound.

Mimicking the situation in $\PP^3$
we  would first like to know the bound for $|C_1 \cap C_2|$ on an irreducible surface of minimal degree (a cubic surface), and understand sharpness of our bound on this surface. Our first result deals with this situation.

\vspace{.1in}

\begin{thmx} \label{theorem a}
\textit{ Let $S$ be an irreducible, nondegenerate cubic surface in $\mathbb P^4$. Then any two reduced, irreducible curves $C_1, C_2$ in $S$ of degrees $d_1, d_2$ satisfy $|C_1 \cap C_2| \leq B(d_1,d_2)$. Furthermore, there exist in $S$ two reduced, irreducible curves $C_1$ and $C_2$ of degrees $d_1$ and $d_2$ such that $|C_1\cap C_2|=B(d_1,d_2)$, for any $d_1,d_2$.  For each choice of $d_1,d_2$ we construct  an example of  curves achieving this bound in terms of their divisor classes  on $S$. }
\end{thmx}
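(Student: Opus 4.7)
The plan is to pass to the smooth (minimal) model of $S$ and reduce the problem to a concrete optimization on its Picard group. By the classical del~Pezzo--Bertini classification of surfaces of minimal degree, an irreducible nondegenerate cubic surface in $\PP^4$ is either the smooth rational normal scroll $S(1,2)\cong\mathbb{F}_1$ or the cone $S(0,3)$ over a twisted cubic, whose minimal desingularization is $\mathbb{F}_3$. I would focus on the smooth scroll first; on $\mathbb{F}_1$ I use the basis $\{C_0,F\}$ of $\mathrm{Pic}(\mathbb{F}_1)$ with $C_0^2=-1$, $F^2=0$, $C_0\cdot F=1$, hyperplane class $H=C_0+2F$, and record that an irreducible nondegenerate curve $C\sim aC_0+bF$ of degree $d=a+b$ satisfies $a\geq 1$ (the case $a=0$ being either a fibre or a union of fibres, all degenerate lines) and $a\leq b$ (from $C\cdot C_0=b-a\geq 0$), hence $1\leq a\leq\lfloor d/2\rfloor$.

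For the upper bound I would compute
\[
C_1\cdot C_2\;=\;-a_1a_2+a_1b_2+a_2b_1\;=\;a_1d_2+a_2d_1-3a_1a_2,
\]
which bounds $|C_1\cap C_2|$ from above since distinct irreducible curves intersect properly. The mixed partial is $-3$, so the Hessian is indefinite and the maximum over the feasibility rectangle $[1,\lfloor d_1/2\rfloor]\times[1,\lfloor d_2/2\rfloor]$ is attained at a vertex. A short case analysis in the parity configurations of $(d_1,d_2)$ shows that the dominant vertex gives precisely the value $B(d_1,d_2)$: for both degrees even with $d_1\leq d_2$ (and for $d_1$ even, $d_2$ odd), the vertex $(d_1/2,1)$ gives $d_1(d_2-1)/2$; for $d_1$ odd and $d_2$ even (and for both even with $d_1\geq d_2$), the symmetric vertex $(1,d_2/2)$ gives $(d_1-1)d_2/2$; and in the both-odd case the vertex $(1,(d_2-1)/2)$ gives $\bigl((d_1-1)(d_2-1)+2\bigr)/2$. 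For the cone $S(0,3)$, an analogous computation on $\mathbb{F}_3$ (where $C_0^2=-3$ and the irreducibility constraint tightens to $b\geq 3a$), together with a single-point correction to account for curves meeting at the vertex, yields a bound that never exceeds $B(d_1,d_2)$ when $d_1,d_2\geq 4$.

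For sharpness I would exhibit explicit divisor classes realizing the dominant vertex in each parity case on $\mathbb{F}_1$: $\bigl(\tfrac{d_1}{2}(C_0+F),\ C_0+(d_2-1)F\bigr)$ when both $d_i$ are even with $d_1\leq d_2$; $\bigl(C_0+(d_1-1)F,\ \tfrac{d_2}{2}(C_0+F)\bigr)$ when $d_1$ is odd and $d_2$ even; $\bigl(\tfrac{d_1-1}{2}C_0+\tfrac{d_1+1}{2}F,\ C_0+(d_2-1)F\bigr)$ when both are odd; and symmetric variants otherwise. Each such class has $b\geq a$, so the associated linear system on $\mathbb{F}_1$ is base-point-free and its general member is smooth by Bertini; irreducibility is trivial when $a=1$ (the member is a section of the ruling) and follows in the $a\geq 2$ cases from the contraction $\mathbb{F}_1\to\PP^2$ collapsing $C_0$, which identifies $|k(C_0+F)|$ with the pullback of $|\mathcal{O}_{\PP^2}(k)|$. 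A further Bertini argument shows that the general pair meets transversally at smooth points of both curves, upgrading the intersection inequality to the equality $|C_1\cap C_2|=C_1\cdot C_2=B(d_1,d_2)$. I expect the main technical point to be the bookkeeping of the parity case analysis -- matching the four corners of the feasibility rectangle to the three branches of $B(d_1,d_2)$ and confirming feasibility of the chosen corner (which requires $d_i\geq 4$, guaranteed by nondegeneracy) -- together with handling the cone case carefully to avoid double-counting intersections at the vertex.
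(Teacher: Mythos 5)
Your proposal is correct and follows essentially the same route as the paper: both identify the smooth cubic with the blow-up of $\PP^2$ at a point (your $\mathbb{F}_1$ with basis $\{C_0,F\}$ is the paper's $\{h,e\}$ in disguise, and your optimization variable and objective $-3a_1a_2+a_1d_2+a_2d_1$ on the rectangle $[1,\lfloor d_1/2\rfloor]\times[1,\lfloor d_2/2\rfloor]$ coincide exactly with the paper's $F(a_1,a_2)$), reduce to the corners by bilinearity, do the same parity case analysis with the same extremal divisor classes, and handle the cone separately via its desingularization with a vertex correction, just as the paper's Remark on the singular case does.
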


\vspace{.1in}

For the proof, see Theorem \ref{bd on cubic} and Remark \ref{sing cubic}. 

The main goal of this paper is to study to what extent $B(d_1,d_2)$ is an upper bound for $|C_1 \cap C_2|$ among all choices of reduced, irreducible, nondegenerate curves of degrees $d_1, d_2$ in $\PP^4$. We pose the following conjecture: 

\vspace{.1in}

\begin{conjx} \label{conjecture b}
{\it Fix positive integers $d_1,d_2$ and let $C_1,C_2$ be reduced, irreducible, nondegenerate curves of degrees $d_1,d_2$ in $\PP^4$. Then \begin{enumerate}
    \item[1)] the number of intersection points of $C_1$ and $C_2$ is at most $B(d_1,d_2)$;
    
    \item[2)] equality can only be achieved in one of the following situations:

    \begin{itemize}

\item both curves lie on a common cubic surface (and then for arbitrary $d_1,d_2$);

\item $d_1$ and $d_2$ are odd, both curves have arithmetic genus 0, and they lie on a common complete intersection quartic surface;

\item $d_1 = d_2 = 4$ and the curves are linked in a complete intersection of type $(2,2,2)$.
        
    \end{itemize}
    
\end{enumerate}}
\end{conjx}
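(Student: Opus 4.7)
The plan is to attack both parts of the conjecture through a careful analysis of the arithmetic genus of $C := C_1 \cup C_2$ via its general hyperplane section. The starting point is the standard exact sequence
\begin{equation*}
0 \to \mO_C \to \mO_{C_1} \oplus \mO_{C_2} \to \mO_{C_1 \cap C_2} \to 0,
\end{equation*}
which, after taking Euler characteristics, yields
\begin{equation*}
|C_1 \cap C_2| \;\leq\; \mathrm{length}(C_1 \cap C_2) \;=\; p_a(C) - p_a(C_1) - p_a(C_2) + 1.
\end{equation*}
The problem therefore reduces to bounding $p_a(C)$ from above in terms of $d_1,d_2,p_a(C_1),p_a(C_2)$, and then optimising over the admissible genera.

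To bound $p_a(C)$, consider the general hyperplane section $\Gamma := C \cap H \subset H \cong \PP^3$, a set of $d_1+d_2$ points whose $h$-vector controls $p_a(C)$ up to a non-negative correction measuring the failure of $C$ to be ACM. Since each $C_i$ is irreducible and nondegenerate, Harris's uniform position lemma applies to $\Gamma_i := C_i \cap H$, constraining the Hilbert function of each piece. Maximising $p_a(C)$ over all $h$-vectors of $\Gamma$ compatible with being the hyperplane section of a nondegenerate curve of degree $d_1+d_2$ in $\PP^4$ produces the invariant $B_g(d_1,d_2)$ of \eqref{eq: genus bound}, and hence the preliminary estimate $|C_1 \cap C_2| \leq B_g(d_1,d_2)$. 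When $B_g(d_1,d_2) \leq B(d_1,d_2)$, this already proves part~1).

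In the opposite regime, the genus bound alone is too weak and sharper information must be extracted from the individual configurations $\Gamma_1,\Gamma_2$. My strategy is to classify, for each residue class of $(d_1,d_2) \bmod 2$, the $h$-vectors of $\Gamma$ that would yield $p_a(C) - p_a(C_1) - p_a(C_2) + 1 > B(d_1,d_2)$, and to rule them out by combining: (i) uniform position of each $\Gamma_i$, which pins down the allowable $h$-vectors of the hyperplane sections of the individual curves; (ii) Castelnuovo-type bounds relating $p_a(C_i)$ to $d_i$; and (iii) the requirement that $\Gamma_1 \cup \Gamma_2$ impose sufficiently many independent conditions on low-degree forms. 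The ACM case already treated in the paper corresponds to the situation where the cohomological correction vanishes and the $h$-vector estimate is tight, so the same combinatorial analysis should be carried out without the ACM hypothesis.

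For part~2), any surviving extremal configuration forces $\Gamma$ onto a surface of small degree in $H$; the Huneke--Ulrich lifting result cited in the paper then propagates this to an actual surface of the same degree in $\PP^4$ containing $C$. Since irreducible nondegenerate surfaces of degree $\leq 2$ do not exist in $\PP^4$, the lift must either be a cubic surface (recovering Theorem~A and the first case of the conjecture), a complete intersection $(2,2)$ quartic on which both curves have arithmetic genus $0$ (which turns out to be compatible with the $h$-vector analysis only when $d_1,d_2$ are both odd, yielding the second case), or the numerically exceptional complete intersection $(2,2,2)$ with $d_1=d_2=4$. The main obstacle -- and the heart of what remains conjectural -- is step (i)--(iii) above: showing in full generality that the uniform position of the individual $\Gamma_i$ is incompatible with precisely those $h$-vectors of $\Gamma$ responsible for the gap between $B_g(d_1,d_2)$ and $B(d_1,d_2)$. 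It is this combinatorial incompatibility that the authors verify in many, but not all, cases, and pushing the argument through uniformly in $(d_1,d_2)$ is where a complete proof will stand or fall.
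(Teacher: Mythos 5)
This statement is a conjecture, and the paper does not prove it in full; your proposal is an outline of essentially the same program the authors follow (the Mayer--Vietoris genus inequality of Proposition~\ref{rosa fact}, the $h$-vector analysis of the general hyperplane section leading to $B_g(d_1,d_2)$, the comparison $B_g \leq B$ in favorable congruence classes, and the Huneke--Ulrich lifting to a cubic or complete intersection quartic surface in the extremal cases), and you correctly identify that the remaining gap is exactly where the paper stops. One imprecision worth flagging: maximizing $p_a(C)$ over \emph{all} admissible $h$-vectors of $\Gamma$ does not produce $B_g(d_1,d_2)$ --- the $h$-vectors $(1,3,3,\dots,3,c)$ coming from curves on a cubic surface give a larger value, so one must first dispose of the case where a curve lies on a cubic surface (Theorem~\ref{bd on cubic}, Lemma~\ref{C int S}, and the degree $\leq 5$ reductions) before Proposition~\ref{min hvtr} forces $a_2 \geq 4$ and the maximization yields $B_g$.
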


\vspace{.1in}

Our main tools in this paper will require us to assume that $d_1, d_2 \geq 6$ and neither curve lies on a cubic surface, so it is of interest to study the case of low degree and the case where one curve does lie on a cubic.  Of course if $d_i \leq 3$ then $C_i$ is degenerate, so we can assume $d_1,d_2 \geq 4$. We show (\Cref{quintic}) that when $d \leq 5$, an irreducible curve of degree $d$ lies on an irreducible cubic surface.

\begin{thmx}
{\it 
Let $S$ be an irreducible cubic surface in $\PP^4$ and let $C_1\subset S$ and $C_2\not\subset S$ be reduced, irreducible, nondegenerate curves in $\PP^4$. Then
\begin{enumerate}
    \item[1)] $ |C_1 \cap C_2 |  \leq  \ |S \cap C_2|\leq 2d_2-1$ (Lemma \ref{C int S});
    \item[2)] $|C_1\cap C_2|\leq B(d_1,d_2)$ with  equality 
    occurring only if
    \begin{itemize}[left=.7em, label=-]
    \item[] either  $d_1=d_2=4$ and $C_1,C_2$ are linked in a complete intersection of type $(2,2,2)$ (\Cref{low deg th 5.9}); 
    \item[] or $d_1=5$, $d_2$ is odd and both curves have arithmetic genus $0$ (\Cref{bd for quintic}).
    \end{itemize}
\end{enumerate}
} 
\end{thmx}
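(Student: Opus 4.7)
The plan is to reduce the whole statement to a parity-based case split and then invoke the two cited low-degree results \Cref{low deg th 5.9} and \Cref{bd for quintic} only where the crude bound from part~(1) is not already strong enough to imply the target bound $B(d_1,d_2)$.

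Part~(1) is immediate: since $C_1\subset S$, we have $C_1\cap C_2\subseteq S\cap C_2$, and Lemma~\ref{C int S} supplies the upper bound $|S\cap C_2|\leq 2d_2-1$.

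For part~(2), I would first run a short parity computation using the three regimes of \eqref{eq: our bound} to verify that $B(d_1,d_2)\geq 2d_2-1$ whenever $d_1\geq 6$, and also when $d_1=5$ with $d_2$ even, the inequality being strict as soon as $d_2\geq 4$ (which is automatic for a nondegenerate irreducible curve in $\PP^4$). In this range, part~(1) yields $|C_1\cap C_2|\leq 2d_2-1<B(d_1,d_2)$, so the desired bound holds and equality is already ruled out, consistent with the statement.

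The two remaining ranges require the cited low-degree theorems. For $d_1=4$ a direct check gives $B(4,d_2)=2d_2-2$, which is one less than the bound from~(1); here I would invoke \Cref{low deg th 5.9} to sharpen to $|C_1\cap C_2|\leq 2d_2-2=B(4,d_2)$ and to characterize equality in the subcase $d_1=d_2=4$ as forcing $C_1,C_2$ to be linked in a complete intersection of type $(2,2,2)$. For $d_1=5$ with $d_2$ odd one has $B(5,d_2)=2d_2-1$ exactly, so~(1) already gives the bound but leaves room for equality; here I would invoke \Cref{bd for quintic} to conclude that equality forces both $C_1$ and $C_2$ to have arithmetic genus zero.

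The main obstacle is plainly packed into the two cited theorems, since their conclusions (CI-linkage in type $(2,2,2)$, and rationality of both curves) are structural rather than numerical, and will presumably rest on an $h$-vector and liaison analysis of a general hyperplane section $\Gamma$ of $C_1\cup C_2$. Once those results are granted, the only care needed at the level of the present theorem is the clean case split and the arithmetic confirmation that $B(d_1,d_2)\geq 2d_2-1$ strictly outside the two boundary ranges $d_1=4$ and $(d_1,d_2)=(5,\text{odd})$; both are routine.
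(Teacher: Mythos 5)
Your proposal is correct and follows essentially the same route as the paper: part (1) plus the arithmetic check that $2d_2-1 < B(d_1,d_2)$ for $d_1\geq 6$ (and for $d_1=5$ with $d_2$ even) disposes of all cases except $d_1\in\{4,5\}$, which the paper likewise delegates to its low-degree lemmas. The one slip is attributional: the sharpening from $2d_2-1$ down to $B(4,d_2)=2d_2-2$ when $d_1=4$ and $d_2\geq 5$ is supplied by Lemma \ref{rnc} (a Bezout/liaison argument), not by \Cref{low deg th 5.9}, which concerns $d_1=d_2=5$; and for $d_1=5$ with $d_2$ odd, \Cref{bd for quintic} directly yields only that $C_2$ has arithmetic genus $0$.
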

\noindent In view of the above, it is not restrictive to assume $d_1, d_2 \geq 6$. For the rest of this overview we will make this assumption.

The following theorem collects our main results toward a resolution of Conjecture \ref{conjecture b}.

\begin{thmx} \label{theorem d} {\it Assume that $C_1, C_2$ are reduced, irreducible and nondegenerate curves of degrees $\geq 6$ and that they do not lie on a common cubic surface. Then 
\begin{itemize}
    \item[1)] If at least one of the curves is ACM (\Cref{acm curves}) then \Cref{conjecture b} holds;
    
    \item[2)] If $d_1$ and $d_2$ are both odd then smooth rational curves $C_1,C_2$ exist with $|C_1 \cap C_2| = B(d_1,d_2)$ (\Cref{lem: curves on del Pezzo quartic});
    
    \item[3)] If $(d_2-d_1)^2\leq M(d_1,d_2)$, where $M(d_1,d_2)$ is as specified in Table \ref{conj cases} (\Cref{B minus Bg}), we have $|C_1 \cap C_2| \leq B(d_1,d_2)$.  If the first inequality is strict then so is the second.
   
\end{itemize}
    In particular, combining 2) and 3), \Cref{conjecture b} holds whenever $d_1 = d_2$.}
\end{thmx}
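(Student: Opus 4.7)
The plan is to dispatch the three parts of the theorem separately, relying throughout on the universal $h$-vector bound
\[
|C_1\cap C_2|\leq B_g(d_1,d_2)
\]
that the paper will have obtained from a general hyperplane section $\Gamma=(C_1\cup C_2)\cap H\subset\PP^3$.

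The easiest part is Part 3). I would write \eqref{eq: our bound} and \eqref{eq: genus bound} out in each of the (at most six) parity cases for $(d_1,d_2)$ and compute $B-B_g$ directly. In every case the difference reduces to a positive constant times $M(d_1,d_2)-(d_2-d_1)^2$, so that $(d_2-d_1)^2\leq M(d_1,d_2)$ is exactly the condition forcing $B_g\leq B$, with strictness preserved. The ``in particular'' conclusion then follows by specializing to $d_1=d_2$: this makes $(d_2-d_1)^2=0$, which one checks is strictly less than $M(d_1,d_2)$ whenever $d_1=d_2$ is even and $\geq 6$ (yielding strict inequality $|C_1\cap C_2|<B$ and hence a vacuous equality characterization), and equal to $M(d_1,d_2)$ precisely when $d_1=d_2$ is odd (where $B_g=B$ and the sharp examples of Part 2) of the theorem fall into the conjecturally predicted equality class, namely curves on a complete intersection quartic). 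This matches \Cref{conjecture b} in both parts.

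For Part 2) of the theorem I would construct the sharp examples on a smooth del Pezzo quartic $S\subset\PP^4$, i.e., a smooth complete intersection of two quadrics, whose Picard lattice is classical. I would select divisor classes $D_1,D_2$ on $S$ of degrees $d_1,d_2$ with $D_1\cdot D_2=B(d_1,d_2)$, arranging the self-intersections so that $|D_i|$ contains a smooth, irreducible rational curve; this uses Bertini together with the vanishing $h^1(\mathcal O_S(D_i))=0$ coming from the del Pezzo structure. The explicit numerics are laid out in \Cref{lem: curves on del Pezzo quartic}.

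The main obstacle is Part 1). With $C_1$ ACM, its general hyperplane section $\Gamma_1=C_1\cap H$ is a zero-dimensional Cohen--Macaulay scheme whose $h$-vector is an $O$-sequence of decreasing type; this simultaneously determines $p_a(C_1)$ and tightly constrains the tail of the $h$-vector $h$ of $\Gamma$. My plan is to combine this with the assumption that $C_1\cup C_2$ does not lie on any cubic surface (which prevents the degree-$\leq 3$ portion of $h$ from being too compressed) in order to refine the generic $h$-vector bound on $p_a(C_1\cup C_2)$, and then to convert back through the standard identity $p_a(C_1\cup C_2)=p_a(C_1)+p_a(C_2)+|C_1\cap C_2|-1$ to obtain $|C_1\cap C_2|\leq B(d_1,d_2)$. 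The hardest step will be the $h$-vector combinatorics needed to cut $B_g$ down to $B$ from just these two hypotheses, and, in the equality case, extracting enough structure on $h$ to apply the Huneke--Ulrich lifting result advertised in the abstract and thereby identify the surfaces that can contain $C_1\cup C_2$, yielding the equality characterization demanded by Part 2) of \Cref{conjecture b}.
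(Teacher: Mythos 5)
Your treatments of Parts 2) and 3) essentially coincide with the paper's: Part 3) is exactly the direct computation of $B-B_g$ carried out in Theorem \ref{B minus Bg} (though note the case analysis is governed by $d_1$ and $d_2-d_1$ modulo $4$, giving sixteen cases rather than ``at most six parity cases,'' since $B_g$ depends on $d \bmod 4$), and your reading of the $d_1=d_2$ specialization — strict inequality when the common degree is even, $(d_2-d_1)^2=M=0$ when it is odd with the quartic-surface examples realizing equality — is precisely Proposition \ref{t: bd in P4 same deg}. Part 2) is the construction of Proposition \ref{lem: curves on del Pezzo quartic} on the del Pezzo quartic; the paper establishes irreducibility of the general member of each linear system by exhibiting two reducible members with no common component through a general point rather than by a cohomological vanishing, but this is a difference of detail, not of route.

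Part 1) is where your plan has a genuine gap. You propose to keep working inside the genus-bound framework: constrain the $h$-vector of $\Gamma$ using the decreasing-type $h$-vector of the ACM curve, sharpen the bound on $p_a(C_1\cup C_2)$, and feed the result through $|C_1\cap C_2|\leq g-g_1-g_2+1$. This cannot close the gap between $B_g$ and $B$ when $d_1$ and $d_2$ are far apart. The deficit $B_g(d_1,d_2)-B(d_1,d_2)$ grows like $(d_2-d_1)^2/8$ (this is visible in the sixteen case computations, where $B-B_g$ is roughly $-2k^2$ with $4k\approx d_2-d_1$; Remark \ref{d1,d2 far apart} notes $B_g$ can even exceed $d_1d_2$), whereas the genus credit you can extract from the ACM hypothesis — a lower bound on the arithmetic genus of the ACM curve — grows far more slowly: an ACM curve of degree $d_2$ in $\PP^4$ can have $h$-vector close to the generic one, so its genus can be on the order of $d_2^{3/2}$, which is swamped by $(d_2-d_1)^2/8$ when $d_1$ is small and $d_2$ is large. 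The paper explicitly abandons the genus route here and uses a Bezout--regularity argument inspired by Diaz (Theorem \ref{acm curves}): assuming $|C_1\cap C_2|>B(d_1,d_2)$ and setting $A=\lfloor B(d_1,d_2)/d_1\rfloor$, B\'ezout forces every element of $[I_{C_2}]_A$ to vanish on all of $C_1$, whence $\reg(I_{C_2})>A$; on the other hand the $h$-vector constraints on an ACM curve not on a cubic surface give $\reg(I_{C_2})\leq\lfloor d_2/4\rfloor+2\leq A$, a contradiction. The ACM hypothesis is used precisely because regularity of a non-ACM curve is not controlled by its general hyperplane section. You would need to replace your Part 1) strategy with an argument of this kind (or something else that does not route through $B_g$).
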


\vspace{.1in}

For curves not on a cubic surface, the known results for $\PP^3$ would lead one to expect that $B(d_1,d_2)$ is not only a bound, but in fact a strict upper bound for the number of intersection points of curves of degrees $d_1$ and $d_2$ in $\PP^4$. Thus part 2) of the above theorem  was a surprise to us.

Comparing our bound in Theorem \ref{theorem a} with the bound of Diaz and Giuffrida for curves in $\PP^4$ (see \eqref{eq: Diaz G}), we note that our result provides an asymptotically sharper estimate improving the bound by a factor of $1/2$ in this context.
In Table \ref{tab: compare G and POLITUS}, we illustrate how our upper bound (\textbf{in boldface}) compares to that of Diaz and Giuffrida for some small degrees. 

\renewcommand{\arraystretch}{1.3}

\begin{table}[h!]
    \centering
    \begin{tabular}{c|c|c|c|c|c|c|c}
    \diagbox[]{$d_1$}{$d_2$} & 4 & 5 & 6 & 7 & 8 & 9 & 100 \\
    \hline
    4     & \comp{6}{6} & \comp{8}{8} & \comp{10}{10} & \comp{12}{12} & \comp{14}{14} & \comp{16}{16} & \comp{198}{198}\\
    5     & \comp{8}{8} & \comp{9}{11} & \comp{12}{14} & \comp{13}{17} & \comp{16}{20} & \comp{17}{23} & \comp{200}{296} \\
    6     & \comp{10}{10} & \comp{12}{14} & \comp{15}{18} & \comp{18}{21} & \comp{21}{24} & \comp{24}{27} & \comp{297}{300} \\
    7     & \comp{12}{12} & \comp{13}{17} & \comp{18}{21} & \comp{19}{27} & \comp{24}{32} & \comp{25}{36} & \comp{300}{400}\\
    8     & \comp{14}{14} & \comp{16}{20} & \comp{21}{24} & \comp{24}{32} & \comp{28}{38} & \comp{32}{44} & \comp{396}{500}\\
    9     & \comp{16}{16} & \comp{17}{23} & \comp{24}{27} & \comp{25}{36} & \comp{32}{44} & \comp{33}{51} & \comp{400}{600}\\
    100     & \comp{198}{198} & \comp{200}{296} & \comp{297}{300} & \comp{300}{400} & \comp{396}{500} & \comp{400}{600} & \comp{4950}{9700}\\
    \end{tabular}
    \bigskip
    \caption{Comparison between \eqref{eq: our bound} and \eqref{eq: Diaz G}.}
    \label{tab: compare G and POLITUS}
\end{table}

Our method for Theorem \ref{theorem a} (Theorem \ref{bd on cubic})  is by a direct examination of the curves arising as divisors on the one-point blowup of $\PP^2$ embedded in $\PP^4$ by the linear system of conics passing through that point.

 Our main approach involves deriving a bound, $B_g(d_1,d_2)$, based on the arithmetic genus of the union. Now we describe this approach.
Proposition \ref{rosa fact} recalls the well-known fact that
\[
|C_1 \cap C_2| \leq g - g_1 - g_2 + 1
\]
where  $g$ is the arithmetic genus of $C = C_1 \cup C_2$, and $g_i$ is the arithmetic genus of $C_i$, $i = 1,2$. This already shows that maximizing $|C_1\cap C_2|$ should be closely tied to maximizing $g$, and it suggests that having $g_1 = g_2 = 0$ would be advantageous to maximizing $|C_1\cap C_2|$ (if it is possible). 

To understand $g$ we invoke the Hilbert function (more precisely the $h$-vector) of the general hyperplane section, $\Gamma$, of $C$. Proposition \ref{genus formula} recalls the connection between $g$, the $h$-vector of $\Gamma$, and a certain submodule of the deficiency module of $C$. When $C$ is not ACM, one of our challenges is to ``lift" properties that we deduce about $\Gamma$ back up to $C$.
These considerations allow us to produce a specific bound $B_g(d_1,d_2)$ for the number of intersection points.  This {\it genus bound} and the above connections are discussed in Section \ref{sec: genus bd} and then starting with Section~\ref{sec: Bg leq B}.

When $B_g(d_1,d_2) \leq B(d_1,d_2)$, we are done: then it necessarily follows that $|C_1 \cap C_2| \leq B_g(d_1,d_2) \leq B(d_1,d_2)$. However, sometimes it is not true that $B_g(d_1,d_2) \leq B(d_1,d_2)$ (see Remark \ref{d1,d2 far apart}) and we discuss some of its limitations in Section \ref{sec: beyond Bg}.

In Figure \ref{fig:Bg vs G} we compare $B_g(d_1,d_2)$ with the minimum of the bounds given by Giuffrida and by Diaz.  It shows that ``most of the time" $B_g(d_1,d_2)$ gives an improvement over the previously known bounds. 
In Figure~\ref{fig:Bg vs B}  we compare $B_g (d_1,d_2)$ with $B(d_1,d_2)$.

\begin{figure}
     \centering
     \begin{subfigure}[t]{0.45\textwidth}
         \centering
         \includegraphics[width=\textwidth]{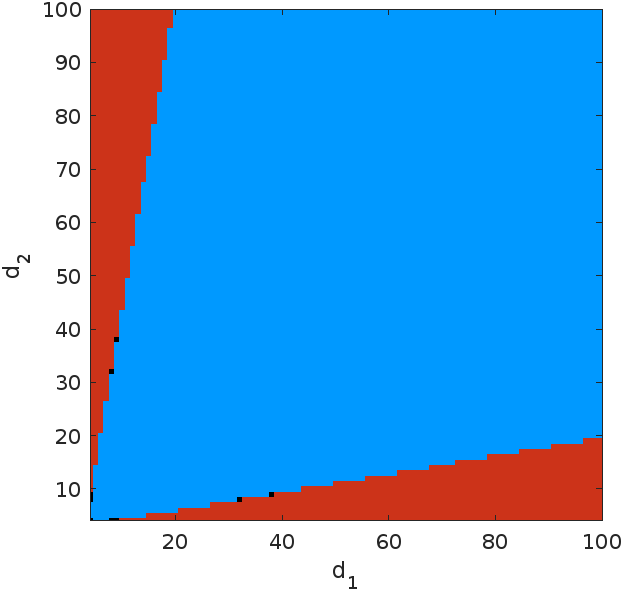}
         \caption{The blue pixels mean $B_g<B_{DG}$,  black means $B_g=B_{DG}$ and red means
         $B_g>B_{DG}$.}
     \end{subfigure}\hspace{1cm}
     \begin{subfigure}[t]{0.45\textwidth}
         \centering
         \includegraphics[width=\textwidth]{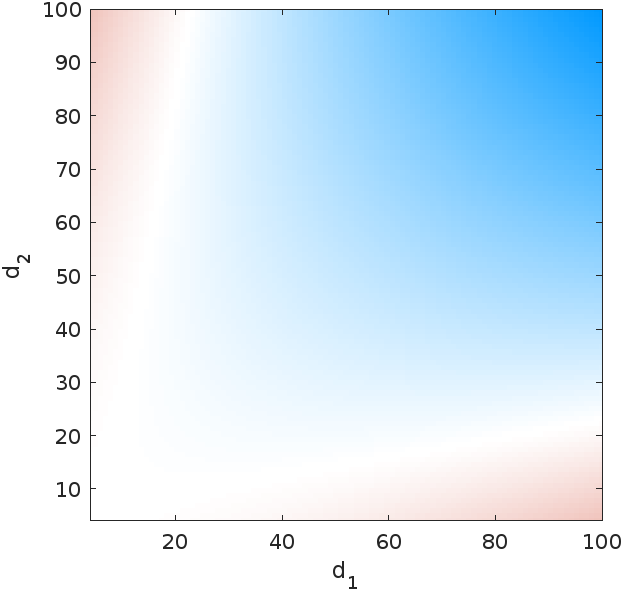}
         \caption{Each pixel's color encodes the difference between the value of 
$B_g$ and $B_{DG}$. Color intensity is scaled proportionally to the absolute value of the difference, normalized by the maximum across the domain.}
     \end{subfigure}
        \caption{Comparison between the genus bound $B_g(d_1,d_2)$ and the Giuffrida-Diaz bound $B_{DG}(d_1,d_2)$.}
        \label{fig:Bg vs G}
\end{figure}

\begin{figure}[h!]
     \centering
     \begin{subfigure}[t]{0.45\textwidth}
         \centering
         \includegraphics[width=\textwidth]{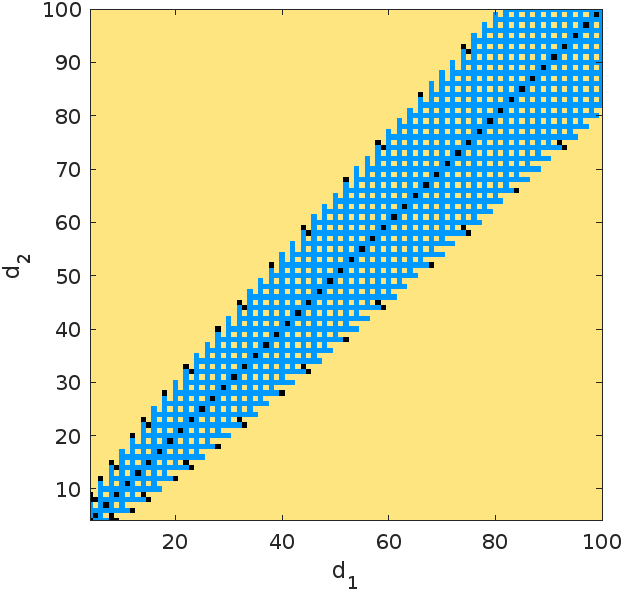}
         \caption{The blue pixels mean $B_g<B$,  black means $B_g = B$, and yellow means $B_g > B$.}
     \end{subfigure}\hspace{1cm}
     \begin{subfigure}[t]{0.45\textwidth}
         \centering
         \includegraphics[width=\textwidth]{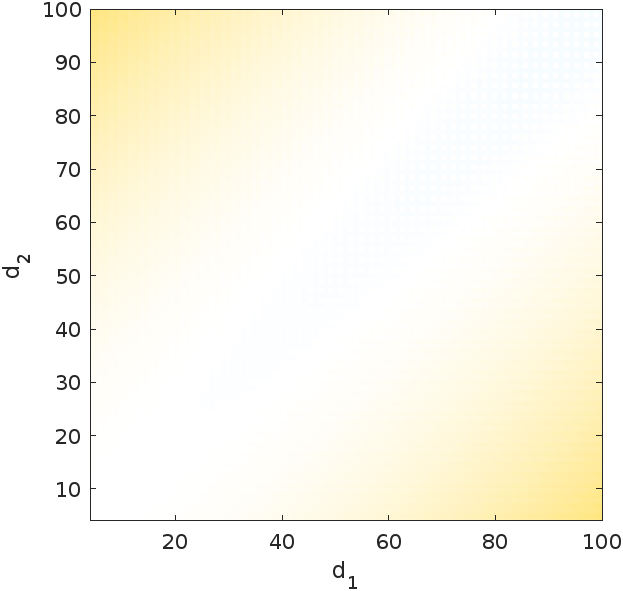}
         \caption{Each pixel's color encodes the difference between the value of 
$B_g$ and $B$. Color intensity is scaled proportionally to the absolute value of the difference, normalized by the maximum across the domain.}
     \end{subfigure}
        \caption{Comparison between $B_g(d_1,d_2)$ and $B(d_1,d_2)$.}
        \label{fig:Bg vs B}
\end{figure}

With regard to Question \ref{Q1} of Questions \ref{questions}, we can see a stark difference between the bounds for $\PP^3$ and $\PP^4$, and one of our open questions (see Section \ref{sec: open questions}) asks what happens in higher projective space. The above results give our contributions to Questions \ref{Q2} and \ref{Q5} (the latter having a negative answer). We also give a negative answer to Question \ref{Q6} in Remark \ref{union not ACM} (see also Proposition \ref{prop: B holds odd degrees}). Our contributions to Questions \ref{Q3} and \ref{Q4} really get into the mechanics of our methods, but we note the importance of Proposition \ref{rosa fact} that directly relates to Question \ref{Q4}. Note also that  one of the two extreme curves (in each case) described in Theorem \ref{bd on cubic} does not have arithmetic genus 0, unlike the conclusion for $\PP^3$ established by Diaz and Giuffrida.

\newpage

\section{The geography of this paper}

Throughout this paper we have many technical and seemingly unmotivated results. We begin our work by giving an overview of these technical results and where they will be important.

We have  the following facts:

\begin{enumerate}

\item Let $C$ be a curve and let $\Gamma$ be its general hyperplane section. We denote by $h_\Gamma$ the Hilbert function of $\Gamma$ in $H = \mathbb P^3$ and by $\underline{h}$ the corresponding $h$-vector (i.e., the first difference of $h_\Gamma$). They give equivalent information: either of $h_\Gamma$ or $\underline{h}$ can be deduced from the other.

In Proposition \ref{genus formula} we recall that the arithmetic genus $g$ of $C$ satisfies
\[
g = \sum_{i=1}^\ell [d - h_\Gamma(i)] - k
\]
where $d = \deg(C)$,  $k$ is the vector space dimension of a certain submodule $K$  of the Hartshorne-Rao module $M(C)$ of $C$, and $\ell \gg 0$. Moreover,  $C$ is ACM if and only if $k =0$.

We denote by $g(\underline{h})$ the integer
\[
g(\underline{h}) = \sum_{i=1}^\ell [d - h_\Gamma(i)],
\]
which is the arithmetic genus of $C$ if it is ACM.

\item \label{item2} Given irreducible curves $C_1$ and $C_2$ of degrees $d_1, d_2$ and arithmetic genera $g_1, g_2$, we consider the union $C = C_1 \cup C_2$. Let $\Gamma$ be the general hyperplane section of $C$ and let $\underline{h}$ be the $h$-vector of $\Gamma$. Using  Proposition \ref{rosa fact} and \ref{genus formula} and Remark \ref{formal calc} it follows that
\[
|C_1 \cap C_2| \leq  g - g_1 - g_2 +1 \leq g+1 \leq  
g(\underline{h}) +1.
\]
The equality $|C_1 \cap C_2| = g(\underline{h}) +1$ holds  if and only if $g_1 = g_2 = 0$ and $C_1 \cup C_2$ is ACM.

\item For the central problem of this paper, we are given only $d_1, d_2$ and consequently their sum $d$. To find an upper bound for $|C_1 \cap C_2|$ (before comparing it to the target $B(d_1,d_2)$) we need to find an upper bound for $g(\underline{h})$ as $\underline{h}$ varies over all possible $h$-vectors $\underline{h}$ for $\Gamma$. So an important component of our work is to restrict the possible $\underline{h}$ that we consider by eliminating many $\underline{h}$.

\item The problem of finding a sharp bound for $|C_1 \cap C_2|$ is completely solved if $C_1$ and $C_2$ lie on a common cubic surface (Theorem \ref{bd on cubic} and Remark \ref{sing cubic}).

\item Assume $C_1$ lies on a cubic surface $S$ and $C_2$ does not lie on $S$. Then $|C_1 \cap C_2| \leq 2d_2 -1$ (Corollary \ref{cor of C int S}). If $d_1 \geq 6$ then one checks that this forces $|C_1 \cap C_2| < B(d_1,d_2)$. If $d_1$ is 4 or 5 (the only other possibilities for irreducible, nondegenerate curves) then these are handled in Lemma \ref{rnc} and Lemma \ref{bd for quintic} (see also Remark \ref{low deg th 5.9}).

\item Combining the latter two items, we can assume without loss of generality that neither curve lies on a cubic surface. Since we show that any nondegenerate irreducible curve of degree 5 lies on a cubic surface (Lemma \ref{quintic}), we can also assume that $d_1,d_2 \geq 6$.

\item Even though we cannot conclude that $\Gamma$ has the Uniform Position Property (UPP) since $C$ is not irreducible, we can assume that the points are in Linear General Position (LGP) in the hyperplane $H = \mathbb P^3$, i.e., that no four of the points lie on a plane (Lemma \ref{lgp rmk}).

\item 
Assume that the $h$-vector of $\Gamma$ has the form
\[
(1,a_1,a_2,\dots,a_{r-1}, a_{r} )
\]
where $a_r \geq 1$, and the $h$-vector is zero beyond degree $r$. Then we have (Proposition \ref{min hvtr})

\begin{itemize}
    \item[(a)] $a_1=3$ and $a_2\geq 4$;
    \item[(b)] the following possibilities hold:

\begin{itemize}
\item[\em (i)] $r=2$, $5 \leq a_2 \leq 6$;   

\item[\em (ii)] $r = 3$, $a_2 = 4$, $1 \leq a_3 \leq 4$; 

\item[\em (iii)] $r=3$, $a_2 = 5$, $1 \leq a_3 \leq 7$;

\item[\em (iv)] $r=3$, $a_2 = 6$, $1 \leq a_3 \leq 10$;

\item[\em (v)] $r \geq 4$, $a_k \geq 4$ for $2 \leq k \leq r-2$, and $a_{r-1} \geq 3$.
    
\end{itemize}
\end{itemize} 

\noindent {\bf We stress the fact that in all cases, $a_i \geq 4$ for $2 \leq i \leq r-2$.}

\item We also show (Lemma \ref{elim (3,2)}) that the $h$-vector $(1,3,4,4, \dots, 4, 3,2)$ cannot occur.

\item Now, to get a good bound for $|C_1 \cap C_2|$ using item \ref{item2} above, we need to see which $h$-vector $\underline{h}$ gives the biggest value of $g(\underline{h})$ among the $h$-vectors satisfying the constraints above. We do this using Corollary \ref{hvtr}, after converting the $h$-vectors to Hilbert functions.

The conclusion (see Remark \ref{a,b}) is that to achieve the maximum $g(\underline{h})$ (given $d$) under our restrictions, the $h$-vector $\underline{h}$ has the form 
\[
(1,3,\underbrace{4,\dots,4}_m,a,b) 
\]
(where we abuse notation and allow $b=0$). Then depending on congruence of $d\pmod{4}$, the biggest theoretical genus for a curve not lying on a cubic surface will come from an $h$-vector ending 
\[
(4,3,1),\; (4,1),\; (4,2) \mbox{ or } (4,3).
\]
We carry out the computation of $g(\underline{h})$ for such $\underline{h}$ in Corollary \ref{cor: genus bound explicit}.
This specific $h$-vector is used to define the {\it genus bound} $B_g(d_1,d_2)$.

\item After these calculations on the $h$-vector, we also need a result allowing us to conclude that $C_1 \cup C_2$ itself lies on an irreducible complete intersection surface in $\mathbb P^4$, by ``lifting" our conclusions about $\Gamma$. Using a theorem of Huneke and Ulrich, together with the LGP property, this is handled in Corollary \ref{lift ci22}.

\item Again invoking item \ref{item2} above, one of the big surprises in this paper is that an example exists of two smooth rational curves of odd degrees on a smooth complete intersection quartic surface whose union is ACM and which meet in $B(d_1,d_2)$ points. This is shown in Proposition \ref{lem: curves on del Pezzo quartic} and motivates the special exception in Conjecture \ref{conjecture b}.

\item In some situations (mostly when $d_1$ and $d_2$ are not far apart) we have $B_g(d_1,d_2) \leq B(d_1,d_2)$, proving most of Conjecture \ref{conjecture b} (see Theorem \ref{B minus Bg}). In other situations, $B_g(d_1,d_2)$ is  bigger. In Section \ref{sec: ACM curves} and Section \ref{sec: beyond Bg} we illustrate other methods that give additional partial results.

\end{enumerate}

\section{Proof of Theorem A, curves on a surface of degree 3 in $\mathbb P^4$}\label{sec: curves on a cubic}

In this section  we derive the bound for the number of intersection points of two reduced, irreducible, nondegenerate curves $C_1,C_2$ of degrees $d_1$ and $d_2$ under the assumption that they  lie on the same nondegenerate irreducible surface of degree 3 in $\PP^4$, and describe the curves that achieve the bound.

Note that a nondegenerate irreducible surface $S$ of degree 3 in $\mathbb P^4$ has minimal degree. 
By the classification of varieties of minimal degree (cf. \cite[Theorem 19.9]{HarrisBook}), we know that either $S$ is smooth, in which case it is isomorphic to the blow up of
$\PP^2$ at a single point $P$, embedded by the linear system $|2h-e|$ of conics through $P$; or $S$ is a cubic cone.
We begin with the case where $S$ is smooth (see Remark \ref{sing cubic} for the case where $S$ is a cubic cone). 

\begin{theorem} \label{bd on cubic}

Let $C_1, C_2$ be distinct reduced, irreducible 
curves of degrees $d_1, d_2 \geq 2$
lying on a smooth nondegenerate surface $S$ of degree 3 in $\mathbb P^4$, viewed as the blow up of $\PP^2$ at a point $P \in \mathbb P^2$.
Let $h$ be the divisor class coming from a line in $\mathbb P^2$ and let $e$ be the exceptional divisor on $S$.

Then $C_1$ and $C_2$ meet in at most $B(d_1,d_2)$ points, moreover the bound is sharp and it is achieved by taking the curves as follows:

\begin{itemize}

\item If $d_1\le d_2$ are both even, or if
$d_1$ is even and $d_2$ is odd,  
take $C_1$ and $C_2$ to be curves whose classes are $(d_1/2)h$ and $(d_2-1)h - (d_2-2)e$.

\item  If both $d_1$ and $d_2$ are odd, take curves $C_1$ and $C_2$ whose classes are
$((d_1+1)/2)h - e$ and $(d_2-1)h - (d_2-2)e$.
\end{itemize}
\end{theorem}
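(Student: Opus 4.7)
The plan is to translate the problem to the divisor class group of $S$, bound $|C_1\cap C_2|$ above by the intersection number $C_1\cdot C_2$, reduce the problem to a bilinear optimization over the admissible classes, and finally exhibit explicit sharp examples.

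\emph{Setup and constraints on the classes.} The Picard group of $S$ is $\ZZ h\oplus\ZZ e$ with $h^2=1$, $h\cdot e=0$, $e^2=-1$, and the hyperplane class is $H=2h-e$; so a divisor $ah-be$ on $S$ has $\PP^4$-degree $(ah-be)\cdot H=2a-b$. Since $d_i\geq 2$, neither $C_i$ is the exceptional divisor $e$, so each $C_i$ is the strict transform of a reduced irreducible plane curve of degree $a_i\geq 1$ with multiplicity $b_i\geq 0$ at $P$, giving $[C_i]=a_ih-b_ie$. By Bezout applied to a general line through $P$, a reduced irreducible plane curve of degree $a\geq 2$ has multiplicity at most $a-1$ at any point (otherwise the curve would coincide with its tangent cone at $P$, violating irreducibility); moreover $(a_i,b_i)=(1,1)$ is excluded since it yields $\PP^4$-degree $1$. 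Substituting $a_i=(d_i+b_i)/2$, the constraints on the pair $(b_1,b_2)$ become
\[
b_i\equiv d_i\pmod 2,\qquad 0\leq b_i\leq d_i-2.
\]

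\emph{Bilinear optimization.} Since $C_1,C_2$ are distinct and irreducible they share no component, so
\[
|C_1\cap C_2|\;\leq\; C_1\cdot C_2\;=\;a_1a_2-b_1b_2\;=\;\tfrac{1}{4}\bigl(d_1d_2+d_1b_2+d_2b_1-3b_1b_2\bigr),
\]
which is affine in each $b_i$ separately. The maximum over the feasible rectangle is therefore attained at a corner $(b_1,b_2)$ with $b_i\in\{b_i^{\min},d_i-2\}$, where $b_i^{\min}=0$ or $1$ according to the parity of $d_i$. Direct evaluation at the four corners in each of the three parity cases shows that the maximum always equals $B(d_1,d_2)$: when both $d_i$ are even it is $\max\{d_1(d_2-1),(d_1-1)d_2\}/2$, attained at $(0,d_2-2)$ or $(d_1-2,0)$; when both are odd it is $((d_1-1)(d_2-1)+2)/2$, attained at $(1,d_2-2)$ (or $(d_1-2,1)$); the mixed parity case is analogous and also attains $B(d_1,d_2)$ at an appropriate corner.

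\emph{Sharpness and expected obstacle.} To realize the claimed examples, take generic members of the corresponding linear systems on $\PP^2$: a general plane curve of degree $d_1/2$ missing $P$ is smooth and irreducible (class $(d_1/2)h$); a general plane curve of degree $d_2-1$ with an ordinary $(d_2-2)$-fold point at $P$ is rational and irreducible (class $(d_2-1)h-(d_2-2)e$); and, for the odd-odd case, a general plane curve of degree $(d_1+1)/2$ passing simply through $P$ gives a curve of class $((d_1+1)/2)h-e$. By Bezout in $\PP^2$ the total intersection multiplicity of the two plane curves equals $a_1a_2$; for a generic configuration the intersection multiplicity at $P$ is exactly $b_1b_2$ (no coincidence of branches/tangent directions), so the remaining $a_1a_2-b_1b_2$ intersection points lie in $\PP^2\setminus\{P\}$, are transverse, and lift to the same number of distinct points on $S\setminus e$. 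Hence $|C_1\cap C_2|=C_1\cdot C_2=B(d_1,d_2)$. The main obstacle is this last genericity step: one must verify existence of irreducible plane curves with a single prescribed high-multiplicity point (a dimension count on the relevant linear system followed by Bertini), and rule out extra intersections above $P$ on the exceptional divisor, which amounts to a codimension-one genericity condition on tangent data at $P$.
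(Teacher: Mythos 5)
Your proposal is correct and follows essentially the same route as the paper: bound $|C_1\cap C_2|$ by the intersection number, constrain the admissible classes (your multiplicity bound $b_i\le a_i-1$ for irreducible plane curves plays the role of the paper's nefness argument for $h$ and $h-e$), maximize the resulting bilinear form at the corners of the feasible rectangle, and read off the extremal classes. The genericity/sharpness step you flag as the main obstacle is handled in the paper more directly by observing that $(d_2-1)h-(d_2-2)e=(2h-e)+(d_2-3)(h-e)$ is very ample for $d_2\ge 3$, so Bertini gives smooth irreducible members and transverse intersection with a fixed smooth member of the other system, avoiding any analysis of branches over $P$.
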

\begin{proof}
Let $\alpha h-\beta e$ be the class of a reduced, irreducible curve $C$ on $S$ with degree at least 2.
Thus $(\alpha h-\beta e)\cdot(2h-e)\geq 2$, or equivalently $2\alpha-\beta\geq 2$. Moreover, since $h$ and $h-e$ are nef on $S$,
we have 
$$(\alpha h-\beta e)\cdot h\geq 0\;\; (\text{so }\alpha\geq 0) \text{ and } 
(\alpha h-\beta e)\cdot (h-e)\geq 0\;\;(\text{so } \alpha\geq \beta).$$
In addition, since $C$ is reduced and irreducible, either $\alpha h-\beta e=e$ or $(\alpha h-\beta e)\cdot e\geq 0$ (so $\beta\geq 0$).
But $\alpha h-\beta e=e$ is ruled out since $e$ has degree 1.
In addition, $(\alpha h-\beta e)\cdot (h-e)> 0$ (hence $\alpha >\beta$), since otherwise $\alpha h-\beta e=m(h-e)$, and $|m(h-e)|$ is composed with a pencil,
so $C$ being reduced and irreducible means $m=1$, but $h-e$ has degree 1, so is ruled out.
Thus we have $\alpha > \beta\geq 0$.

Therefore we can write the class of $C$ as $ah+b(h-e)$ for $a>0$ and $b\geq0$
(where $\alpha=a+b$ and $\beta=b$).
We  note that $|ah+b(h-e)|$ for any such class contains a smooth, irreducible curve.
(Certainly $|ah|$ does since the members of this linear system are the same as plane curves of degree $a>0$.
And when $b>0$, $ah+b(h-e)$ is very ample since $2h-e=h+(h-e)$ is, hence the
general member of $|ah+b(h-e)|$ is smooth and irreducible by Bertini's theorem (see \cite[Theorem 8.18]{hartshorne}).

Let $a_ih+b_i(h-e)$ be the class of $C_i$ for $i=1,2$, with $a_i>0$ and $b_i\geq0$.
If $b_1=b_2=0$, then a general member of $|a_1h|$ meets a general member of $|a_2h|$ transversely
(because here we have a general plane curve of degree $a_1>0$ meeting a general plane curve of degree $a_2>0$).
And if $a_1, a_2>0$ and either $b_1>0$ or $b_2>0$ (say $b_1>0$), then again a general member of $|a_1h+b_1(h-e)|$ meets a
general member of $|a_2h+b_2(h-e)|$ transversely,
since a general member of $a_2h+b_2(h-e)$ is smooth and $a_1h+b_1(h-e)$ is very ample.
Thus the number of points of intersection of a general member $C_1$ of $|a_1h+b_2(h-e)|$ with
a general member $C_2$ of $|a_2h+b_2(h-e)|$ is exactly 
\begin{align*}
C_1\cdot C_2 &=(a_1h+b_1(h-e))\cdot(a_2h+b_2(h-e)) \\
&=
a_1a_2+a_1b_2+a_2b_1=a_1a_2+a_1(d_2-2a_2)+a_2(d_1-2a_1)\\
&=-3a_1a_2+a_1d_2+a_2d_1.
\end{align*}
Let $F(a_1,a_2)=-3a_1a_2+a_1d_2+a_2d_1$. Given $d_1$ and $d_2$, we want to find the maximum value of $F(a_1,a_2)$,
subject to $1\leq a_1\leq \lfloor\frac{d_1}{2}\rfloor$ and $1\leq a_2\leq \lfloor\frac{d_2}{2}\rfloor$
(since the smallest allowed value of $a_1$ is 1, and since $2a_1+b_1=d_1$, the largest allowed value of $a_1$ occurs when $b_1=0$ or 1, and is
thus $a_1= \lfloor\frac{d_1}{2}\rfloor$, and likewise for $a_2$).

Note for fixed $a_1$, that $F(a_1,a_2)=-3a_1a_2+a_1d_2+a_2d_1$ is linear in $a_2$, and likewise, for
fixed $a_2$, $F(a_1,a_2)=-3a_1a_2+a_1d_2+a_2d_1$ is linear in $a_1$. This means for a fixed $a_1$ that the maximum
of $F(a_1,a_2)$ occurs at an endpoint of $1\leq a_2\leq \lfloor\frac{d_2}{2}\rfloor$
(and likewise for a fixed $a_2$ the maximum
of $F(a_1,a_2)$ occurs at an endpoint of $1\leq a_1\leq \lfloor\frac{d_11}{2}\rfloor$).
Thus the maximum of $F(a_1,a_2)$
on the region $\{(a_1,a_2) : 1\leq a_1\leq \lfloor\frac{d_1}{2}\rfloor, 1\leq a_2\leq \lfloor\frac{d_2}{2}\rfloor\}$
occurs on the boundary, and on the boundary $F(a_1,a_2)$ achieves its maximum at one of the four corners,
$(1,1)$, $(1,\lfloor\frac{d_2}{2}\rfloor)$, $(\lfloor\frac{d_1}{2}\rfloor,1)$ or $(\lfloor\frac{d_1}{2}\rfloor,\lfloor\frac{d_2}{2}\rfloor)$.
So, for the various cases, we just need to check $F(a_1,a_2)$ at these four points.

When $d_1=2$ (or $d_2=2$), then the maximum occurs at $(a_1,a_2)=(1,1)$ and is $d_2-1$ when $d_1=2$ (and $d_1-1$ when $d_2=2$),
so the classes of $C_1$ and $C_2$ are $h$ and $h+(d_2-2)(h-e)$, resp., all as claimed
(and $h+(d_1-2)(h-e)$ and $h$ when $d_2=2$).
So hereafter we may assume $d_1,d_2>2$.

If both $d_1$ and $d_2$ are even with $d_1\leq d_2$, then the maximum is
$d_1(d_2-1)/2$ and occurs at $(a_1,a_2)=(d_1/2,1)$, so the curves are $C_1=(d_1/2)h$ and $C_2=h+(d_2-2)(h-e)$,
all as claimed.

Now assume $d_1$ is even and $d_2$ is odd. Then the maximum is again
$d_1(d_2-1)/2$ and occurs at $(a_1,a_2)=(d_1/2,1)$, so the curves are $C_1=(d_1/2)h$ and $C_2=h+(d_2-2)(h-e)$,
all as claimed.

Finally, assume $d_1$ and $d_2$ are both odd. Then the maximum is
$1+(d_1-1)(d_2-1)/2$ and occurs at both $(a_1,a_2)=((d_1-1)/2,1)$ and
$(a_1,a_2)=(1,(d_2-1)/2)$, so the curves $C_1$ and $C_2$ can be taken to be either
$((d_1-1)/2)h+(h-e)$ and $h+(d_2-2)(h-e)$, or $h+(d_1-2)(h-e)$ and $((d_2-1)/2)h+(h-e)$,
all as claimed.
\end{proof}

\begin{remark} \label{union not ACM}
    Question \ref{Q6} of the introduction asked whether two curves that meet in the maximum number of intersection points must have a union that is ACM. Theorem \ref{bd on cubic} gives a surprising negative answer. For instance, take $d_1=6, \ d_2=8$. Then $B(d_1,d_2) = 21$ and is achieved by curves whose classes are $3h$ and $7h-6e$. But one can check that the union, $10h-6e$, is not ACM. This can be checked by a computer algebra program or else one can observe that the hyperplane section of the cubic surface $S$ has divisor class $H = 2h-e$ so $10h-6e = 4H + (2h-2e)$. This is evenly linked to the divisor $(2h-2e)$ (\cite[Corollary 5.14]{KMMNP}). Since the latter is a pair of skew lines, and the ACM property is preserved under linkage, a curve in the divisor class $10h-6e$ is not ACM.
\end{remark}

Here we include two results to elucidate nondegeneracy of curves on a smooth cubic surface $S\subset\PP^4$,
where $S$ is as described in Theorem \ref{bd on cubic}. Thus $S$ is the blow up of $\PP^2$ at a point $P$
and its divisor class group has basis $h$, $e$, as above.

\begin{lemma} \label{degen red irred  C on cubic}
Let $C$ be a reduced irreducible curve on a smooth cubic surface $S\subset \PP^4$. Let $d$ be the degree of $C$.
Then $C$ is nondegenerate if and only if $d\geq4$.
\end{lemma}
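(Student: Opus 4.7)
The plan is to read off degeneracy directly from the divisor class of $C$ on $S$. Since the hyperplane class of $S\subset\PP^4$ is $H=2h-e$, the curve $C$ is degenerate if and only if $C$ is contained in some effective divisor of $|2h-e|$; because $C$ is reduced and irreducible, this is in turn equivalent to the residual class $(2h-e)-[C]$ being effective on $S$. The surface $S$ is the blow-up of $\PP^2$ at one point, i.e.\ the Hirzebruch surface $F_1$, whose cone of effective divisors is generated by the fiber $h-e$ and the $(-1)$-section $e$; equivalently, a class $\xi(h-e)+\eta e$ is effective if and only if $\xi,\eta\geq 0$.

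I would then combine this with the classification of reduced irreducible curve classes on $S$ already extracted from the proof of Theorem~\ref{bd on cubic}: such a class is either $e$, a member of $|h-e|$, or of the form $ah+b(h-e)$ with $a\geq 1$ and $b\geq 0$; in the last case the degree is $d=(ah+b(h-e))\cdot(2h-e)=2a+b$. The first two possibilities are lines and hence obviously degenerate. In the remaining case I would compute
\[
(2h-e)-\bigl(ah+b(h-e)\bigr)=(1-a)h+(1-b)(h-e)=(2-a-b)(h-e)+(1-a)e,
\]
so the residual is effective if and only if $a\leq 1$ and $a+b\leq 2$. Together with $a\geq 1$ and $b\geq 0$ this forces $(a,b)\in\{(1,0),(1,1)\}$, giving respectively $d=2$ with $[C]=h$ (a conic) and $d=3$ with $[C]=2h-e$ (a hyperplane section). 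Consequently a reduced irreducible $C\subset S$ is degenerate precisely when $d\in\{1,2,3\}$, which is the lemma.

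I do not anticipate any serious obstacle: once the effective cone of $F_1$ is recorded and the classification of irreducible classes is borrowed from the proof of Theorem~\ref{bd on cubic}, the rest is a short bookkeeping computation. The only mild point needing mention is that every effective $D\in|2h-e|$ is genuinely cut out by a hyperplane of $\PP^4$, which holds because $S$ is embedded by the complete linear system $|2h-e|$, so the hyperplane sections of $S$ exhaust $|2h-e|$.
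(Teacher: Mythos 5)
Your proof is correct, but it takes a genuinely different route from the paper's. The paper handles the two directions separately and more cheaply: for ``nondegenerate $\Rightarrow d\geq 4$'' it simply invokes the general fact that a nondegenerate variety of codimension $r$ in $\PP^n$ has degree at least $r+1$ (here $r=3$), and for the converse it observes that a degenerate $C$ is a component of a hyperplane section of $S$, which has class $2h-e$ and degree $3$, hence $d\leq 3$. You instead recast degeneracy as effectivity of the residual class $(2h-e)-[C]$ and run the whole statement through the effective cone of $S\cong F_1$ together with the classification of irreducible classes extracted from the proof of Theorem~\ref{bd on cubic}; all the steps check out, including the point you rightly flag that hyperplane sections of $S$ exhaust $|2h-e|$ because the embedding is by the complete linear system. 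What your approach buys is an explicit list of the degenerate irreducible classes ($e$, $h-e$, $h$, $2h-e$), which is essentially the method the paper itself uses for the subsequent Lemma~\ref{degen red C on cubic} on reduced but possibly reducible curves; what it costs is that it is tied to the explicit divisor geometry of this particular surface, whereas the paper's forward direction is a one-line appeal to a completely general degree bound.
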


\begin{proof}
Assume $C$ is nondegenerate. Since for a nondegenerate variety of degree $\delta$ and codimension $r$ in $\PP^n$ satisfies
$\delta\geq r+1$, applying this to $C$ we have $d\geq 4$.

Conversely, suppose $C$ is not nondegenerate. Thus there is a hyperplane $H$ such that $C\subset H$.
But as in Theorem \ref{bd on cubic}, the class of $H\cap S$ is $2h-e=h+(h-e)$. This has degree 3,
and since $C$ is a component of a hyperplane section, we get $\deg C\leq 3$.
\end{proof}

\begin{lemma} \label{degen red C on cubic}
Let $C$ be a reduced curve on a smooth cubic surface $S\subset \PP^4$. Let $d$ be the degree of $C$.
Then $C$ is nondegenerate if and only if either $d\geq4$, or $d=3$ with $[C]=h+e$ or with $[C]=3(h-e)$.
\end{lemma}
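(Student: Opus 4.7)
The approach is to exploit the observation, already used in Lemma \ref{degen red irred  C on cubic}, that the hyperplane sections of $S$ form the complete linear system $|2h-e|$, of degree $3$. Thus a reduced curve $C \subset S$ is degenerate if and only if it is contained in some member of $|2h-e|$, equivalently, if and only if the residual class $(2h-e) - [C]$ is effective (any effective $D'$ representing this residual gives $C + D' \in |2h-e|$, hence a hyperplane containing $C$).

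For $d = \deg C \geq 4$ the residual $(2h-e)-[C]$ has negative degree $3-d < 0$ and so is not effective, so $C$ is nondegenerate. This case is essentially the contrapositive of the argument of Lemma \ref{degen red irred  C on cubic}.

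For $d \leq 3$ I will enumerate the possible effective classes of each small degree. Since $S = \mathrm{Bl}_P\PP^2$ has effective cone generated by $e$ (with $e^2=-1$) and $h-e$ (with $(h-e)^2=0$), each of degree $1$ under the embedding by $|2h-e|$, the effective classes of degree $d$ are exactly those of the form $ke+m(h-e)$ with $k,m\ge 0$ and $k+m=d$. For $d=1$ or $2$ one checks directly that $(2h-e)-[C]$ remains effective in every case (the residuals are among $2(h-e),\,h-e,\,e$, which are manifestly in the effective cone), so every reduced $C$ of degree $\leq 2$ is degenerate. For $d=3$ the possible effective classes are $3e$, $h+e$, $2h-e$, $3(h-e)$; here $3e$ admits no reduced representative, while $2h-e$ is itself the hyperplane section class and so every reduced curve in that class is degenerate. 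For the remaining classes $h+e$ and $3(h-e)$ I compute $(2h-e)-(h+e)=h-2e$ and $(2h-e)-3(h-e)=2e-h$, and verify non-effectivity by intersecting against the nef classes $h-e$ and $h$ respectively: $(h-2e)\cdot(h-e) = -1$ and $(2e-h)\cdot h = -1$. Since nef classes pair nonnegatively with effective divisors, neither residual is effective, and the curves in these two classes are nondegenerate.

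The main subtlety I anticipate is handling the class $h+e$: its expression $h+e = 2e+(h-e)$ in the extremal generators might suggest only a non-reduced representative, but the inequality $(h+e)\cdot e = -1 < 0$ forces $e$ as a fixed component, giving $|h+e| = e + |h|$; choosing $C' \in |h|$ to be an irreducible conic then yields the reduced disjoint union $e \cup C'$ (disjoint since $e \cdot h = 0$). Recognizing this reducible-yet-nondegenerate configuration is precisely what distinguishes this lemma from its irreducible predecessor Lemma \ref{degen red irred  C on cubic}.
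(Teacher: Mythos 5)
Your proof is essentially correct and follows the same basic strategy as the paper's: both arguments rest on the fact that the hyperplane sections of $S$ form the complete linear system $|2h-e|$ of degree $3$, followed by an enumeration of the effective classes of degree $\leq 3$. Your uniform reformulation --- $C$ is degenerate if and only if the residual $(2h-e)-[C]$ is effective, with effectivity tested against the nef classes $h$ and $h-e$ --- is a tidier packaging than the paper's, which instead lists the possible irreducible components case by case, quotes Lemma \ref{degen red irred  C on cubic}, and uses ad hoc observations (e.g.\ that components of classes $e$ and $h-e$ span only a plane). Your treatment of $d\geq 4$, of the classes $h+e$ and $3(h-e)$, and the fixed-component analysis $|h+e|=E+|h|$ are all correct.

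There is, however, one slip in the case $d\leq 2$. The effective classes of degree $2$ of the form $ke+m(h-e)$ include $2e$, whose residual $(2h-e)-2e=2h-3e$ satisfies $(2h-3e)\cdot(h-e)=-1$ and is therefore \emph{not} effective, contrary to your parenthetical claim that the residual is effective "in every case" for $d=1,2$. (You also omit the residual $h$ of the class $h-e$, but that one is harmlessly effective.) The conclusion survives because, exactly as for $3e$, the only effective divisor in the class $2e$ is $2E$, which is not reduced; so this class must be discarded on the grounds that it has no reduced representative, not by the residual test. With that one-line repair the argument is complete.
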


\begin{proof}
If $d\geq4$, then $C$ cannot be contained in a hyperplane, since a hyperplane section of $S$ has degree 3.
If $[C]=h+e$ or $[C]=3(h-e)$, then $d=3$. Suppose $[C]=h+e$. If $C$ were contained in a hyperplane, then
(since $\deg C=3$ and a hyperplane section of $S$ has degree 3) $C$ would have to be the hyperplane section;
i.e., $h+e=2h-e$, which is not the case. Thus $C$ is nondegenerate. Likewise, if $[C]=3(h-e)$, then
$C$ is nondegenerate.

For the converse, we prove the contrapositive of the converse.
I.e., we show that a curve $C$ of degree 3 or less whose class is not
$[C]=h+e$ or $[C]=3(h-e)$ is degenerate.
To do this, we first enumerate all reduced irreducible curves $A$ of degree 3 or less on $S$.
Let $ah-be$ be the class of $A$.
Since $h$ and $h-e$ are nef, $ah-be$ must satisfy $a\geq0$ and $a\geq b$.
Let $E$ be the curve whose class is $e$. Since $(ah-be)\cdot e=b$,
either $A=E$ or $b\geq 0$. Thus either $A=E$ or $a\geq b\geq 0$.
But if $a=b$, then the class of $A$ is $a(h-e)$, so $A$ is reduced and irreducible in this case
exactly when $a=1$. So the class of $A$ is either $e$, $h-e$ or $ah-be$ with $a>b\geq0$,
so $ah-be=\alpha h+b(h-e)$ where $a=\alpha+b$.

So the classes of reduced irreducible curves $A$ of degree 1 are $e$ and $h-e$.
The only class of a reduced irreducible curve $A$ of degree 2 is $h$.
The only class of a reduced irreducible curve $A$ of degree 3 is $2h-e$.
By Lemma \ref{degen red irred  C on cubic}, $A$ in each of these cases is degenerate.

So now we must consider reduced, reducible curves of degree at most 3.
A curve of degree 1 is reduced and irreducible so we need only consider
reduced but not irreducible curves of degree 2 or 3.

A reduced, reducible curve $C$ of degree 2 has two components, either one of which has class
$e$ and the other $h-e$, or both have class $h-e$.
Say its components are $h-e$ and $e$.
Any two such components span only a plane (being two intersecting lines), and hence $C$ in this case is degenerate.
If both are $h-e$, then $C+E$ has class $2h-e$ and is thus a hyperplane section and so is degenerate.

A reduced curve of degree 3 has either 2 irreducible components whose classes are thus $h$ and $e$, or $h$ and $h-e$,
or it has three irreducible components whose classes are thus $e$, $h-e$ and $h-e$ or all three have class $h-e$.

We saw above for $h+e$ and $3(h-e)$ that $C$ is nondegenerate. If it is $h$ and $h-e$,
then the class of $C$ is $2h-e$ so $C$ is a hyperplane section, so $C$ is degenerate.
If it is $e$, $h-e$ and $h-e$,
then the class of $C$ again is $2h-e$ so $C$ is a hyperplane section and $C$ is degenerate.
\end{proof}

\begin{remark} \label{sing cubic}
Here we consider the case of curves on $S$ in case the surface $S$ is a nondegenerate cubic cone.
Let the vertex of the cone be the point $P$. If we blow up $P$ we get a smooth ruled surface $S'$.
Let $e$ be the class of the exceptional curve $E$ for $P$ and let $f$ be the class of the ruling, so
$e$ and $f$ generate the divisor class group of $S'$. A hyperplane section of $S$ away from $P$
corresponds on $S'$ to a class of the form $ae+tf$ with $3=(ae+tf)^2=a^2e^2+2at$, $a=f(ae+tf)\geq0$ and $ae^2+t=e(ae+tf)=0$.
Thus $t=-ae^2$ so $3=a^2e^2-2a^2e^2=-a^2e^2$, hence $a=1$, $e^2=-3$ and $t=3$.

For a reduced curve $C_1 \subset S$ not through $P$, we can identify $C_1$ with its proper transform on $S'$,
so on $S'$ we have $[C_1]\cdot f\geq0$ (since $f$ is nef) and $C_1\cdot E=0$, hence $[C_1]=b_1(e+3f)$.
Let $C_2$ be a reduced curve on $S$ with no component 
in common with $C_1$, and let $D_2$ be its proper transform on $S'$ (so $D_2$ does not have $E$ as a component).
Then $|C_1\cap C_2|\leq C_1\cdot D_2$.
But $[D_2]=b_2e+c_2f$ with $D_2\cdot E\geq 0$ and $[D_2]\cdot f\geq0$ (so $c_2\geq 3b_2\geq0$), thus   
$d_1=\deg(C_1)=3b_1$ and $d_2=\deg(C_2)=D_2\cdot(e+3f)=(b_2e+c_2f)\cdot(e+3f)=c_2$.
We now have $C_1\cdot D_2=b_1c_2=d_1d_2/3$.
Since a general hyperplane section of $C_2$ will be transverse, we can replace $C_1$ by a curve
linearly equivalent to $C_1$ such that $|C_1\cap C_2| = C_1\cdot D_2=d_1d_2/3$.

Now assume $C_1$ and $C_2$ are reduced curves on $S$ through $P$ with no components 
in common, and let $D_1$ and $D_2$ be their proper transforms on $S'$ (so neither $D_1$ nor $D_2$ has $E$ as a component).
Then $|C_1\cap C_2|\leq D_1\cdot D_2 +1$, where the plus 1 comes from having $P$ in common.
(For example, if $C_1$ and $C_2$ are lines through $P$, then $|C_1\cap C_2| = D_1\cdot D_2 +1=f^2+1=1$.)

Here we have $[D_1]=b_1e+c_1f$ and $[D_2]=b_2e+c_2f$, with $d_1=c_1> 3b_1\geq0$ and $d_2=c_2> 3b_2\geq0$,
where we have $c_1>3b_1$ since $D_1\cdot E>0$ due to $P$ being on $C_1$ (and likewise 
$c_2>3b_2$). So we have  $D_1\cdot D_2=-3b_1b_2+c_1b_2+c_2b_1=-3b_1b_2+d_1b_2+d_2b_1$.

If $b_1=0$ (the case that $b_2=0$ is symmetric), then $D_1\cdot D_2=d_1b_2 < d_1c_2/3=d_1d_2/3$.
Thus we have $|C_1\cap C_2| \leq d_1b_2+1< \frac{d_1d_2}{3}+1$. 
If $b_2=0$, then $|C_1\cap C_2|  = d_1b_2+1=1$.
If $b_2>0$, then 
$D_2$ is very ample \cite[Corollary 2.18]{hartshorne}, so we may replace $D_2$ by a linearly equivalent
curve such that $D_1\cap D_2$ is transverse, so that replacing $C_2$ to match the new $D_2$ we have 
$|C_1\cap C_2| = d_1b_2+1< \frac{d_1d_2}{3}+1$.

Finally, assume $b_1>0$ and $b_2>0$. Thus $D_1$ and $D_2$ are very ample.
Then we have $|C_1\cap C_2| \leq -3b_1b_2+d_1b_2+d_2b_1+1$, with equality if we replace
$D_1$ and $D_2$ (and $C_1$ ad $C_2$ to match) so that $D_1$ and $D_2$ meet transversely.
Here we have $1\leq b_1<d_1/3$ and $1\leq b_2<d_2/3$.
Thus $-3b_1b_2+d_1b_2+d_2b_1=(d_1-3b_1)b_2+d_2b_1<(d_1-3b_1)d_2/3+d_2b_1=d_1d_2/3$.

In summary, 
if $C_1$ and $C_2$ are reduced curves on $S$ with no common components and at least one not containing $P$,
then $|C_1\cap C_2| \leq \frac{d_1d_2}{3}$ and this bound is sharp for each $d_1$ and $d_2$ (note 
that if $P\not\in C_1$, then $d_1$ must be a multiple of 3, and likewise for $C_2$ and $d_2$).
If $C_1$ and $C_2$ are reduced curves on $S$, both containing $P$ but with no common components,
then $|C_1\cap C_2| < \frac{d_1d_2}{3}+1$.

To get more precise bounds in the case that both curves $C_1$ and $C_2$ contain $P$,
recall that $[D_1]=b_1e+c_1f$ and $[D_2]=b_2e+c_2f$ where $d_1=c_1>3b_1\geq0$ and $d_2=c_2>3b_2\geq0$
with $d_1$ being $\deg(C_1)$ and $d_2$ being $\deg(C_2)$.
Define $i>0$ and $j>0$ by $d_1=c_1=3b_1+i$ and $d_2=c_2=3b_2+j$.
Then $D_1\cdot D_2=-3b_1b_2+c_1b_2+c_2b_1=-3(d_1-i)(d_2-j)/9+d_1(d_2-j)/3+d_2(d_1-i)/3=(d_1d_2-ij)/3$.
Thus $|C_1\cap C_2|\leq (d_1d_2-ij)/3 +1$. 
Clearly $D_1\cdot D_2$ increases as $ij$ decreases. For fixed $d_1$ and $d_2$,
there is a unique choice of $b_1$ and $b_2$ so that $1\leq i,j\leq 3$.
Any other choice of $b_1$ and $b_2$ giving the same values for $d_1$ and $d_2$
would force $i$ or $j$ to be larger. Assume $b_1$ and $b_2$ have been chosen so
that $1\leq i,j\leq 3$ and hence $ij$ is as small as possible.
If $b_1=b_2=0$, then $d_1d_2-ij=0$ so $|C_1\cap C_2|=1$.
If $b_1>0$ (or $b_2>0$, resp.), then $D_1$ (or $D_2$, resp.) is ample so meets $C_2$ (or $C_1$, resp.) 
transversely, so $|C_1\cap C_2| = (d_1d_2-ij)/3 + 1$.
Note that $(d_1d_2-ij)/3 + 1\leq d_1d_2/3$ unless $(i,j)\in\{(1,1), (1,2), (2,1)\}$.
In these cases we have, resp., $\frac{d_1d_2-ij}{3} + 1=\frac{d_1d_2+2}{3}, \frac{d_1d_2+1}{3}, \frac{d_1d_2+1}{3}$.
\end{remark}

\begin{remark}
Our main focus for the rest of the paper is to show to what extent the bound from Theorem~\ref{bd on cubic} holds for intersections of curves not necessarily on a cubic surface. Our motivation comes from the known result for curves in $\PP^3$ \cite{diaz}, \cite{Giuffrida86}, where a bound is given for the number of intersection points of two irreducible curves on a quadric surface, and it is further shown that if the curves do not lie on a common quadric surface then the bound is never achieved. Theorem \ref{bd on cubic} gives our analog for the former result.

The rest of the paper will be geared toward a better understanding of what happens off the cubic surface, and to proving special cases of the conjecture.
\end{remark}

\subsection{Smooth, irreducible curves not in a cubic surface with high intersection numbers}
\label{sec: quartic}
Motivated by the constructions on the cubic surface in the previous section, we now construct certain pairs of curves on the del Pezzo surface of degree 4 with high intersection numbers. This gives examples of curves of degree $d_1, d_2$ not contained on a cubic surface meeting in $B(d_1,d_2)$ points.  These curves will later serve as key examples in Theorems~\ref{t: bd in P4 same deg} and~\ref{case d neq d'}. We note that these curves are not only irreducible but in fact smooth.

Let $f\colon S\to\PP^2$ be the blow up of $\PP^2$ at five general points $P_1,\ldots,P_5$ with exceptional divisors $e_1,\ldots,e_5$ and let $h=f^*\mathcal{O}_{\PP^2}(1)$.
The linear system $L=3h-e_1-\cdots-e_5$ embeds $S$ as a quartic surface into $\PP^4$.

\begin{proposition}\label{lem: curves on del Pezzo quartic}
For odd integers $d_1=2k+1$ and $d_2=2\ell+1$ there exist smooth curves $C_1, C_2$ on the del Pezzo quartic surface in $\PP^4$ such that   $$\deg(C_1)=d_1, \;\deg(C_2)=d_2\;\;
\mbox{ and }\;\; |C_1\cap C_2|=\frac{1}{2}(d_1-1)(d_2-1)+1=B(d_1,d_2).$$ 
\end{proposition}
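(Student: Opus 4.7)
My approach is to exhibit explicit divisor classes $D_1, D_2$ on $S$ realizing the claimed bound and then invoke Bertini to produce smooth irreducible representatives. The guiding heuristic is that smooth rational curves of odd degree $2m+1$ on $S$ arise as proper transforms of rational plane curves whose singularities are concentrated at the blown-up points, so to maximize $D_1\cdot D_2 = a_1 a_2 - \sum b_{1,i} b_{2,i}$ one chooses classes whose prescribed multiplicities overlap as little as possible. Writing $d_1 = 2k+1$ and $d_2 = 2\ell+1$, I propose
\[
D_1 = (k+2)h - (k+1)e_1 - e_2 - e_3 - e_4 - e_5, \qquad D_2 = 2\ell\,h - \ell e_2 - \ell e_3 - \ell e_4 - (\ell-1)e_5,
\]
so that $D_1$ concentrates its large multiplicity at $P_1$ while $D_2$ avoids $P_1$ entirely.

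With $L = 3h - e_1 - \cdots - e_5$ the hyperplane class, the standard intersection relations $h^2=1$, $h\cdot e_i = 0$, $e_i\cdot e_j = -\delta_{ij}$ yield $L \cdot D_1 = 2k+1$, $L\cdot D_2 = 2\ell+1$, $D_1^2 = 2k-1$, $D_2^2 = 2\ell-1$, and
\[
D_1 \cdot D_2 = 2\ell(k+2) - \bigl(3\ell + (\ell-1)\bigr) = 2k\ell + 1 = B(d_1,d_2).
\]
The adjunction formula $2p_a(D_i) - 2 = D_i^2 - L \cdot D_i$ then gives $p_a(D_i) = 0$, so any smooth irreducible representative is a smooth rational curve of the prescribed degree.

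To show that $|D_1|$ and $|D_2|$ each contain a smooth irreducible curve, I would proceed as follows. The class $D_1$ corresponds on $\PP^2$ to the linear system of plane curves of degree $k+2$ with a $(k+1)$-fold point at $P_1$ passing simply through $P_2,\ldots,P_5$, and $D_2$ to the system of degree-$2\ell$ plane curves with three $\ell$-fold points at $P_2, P_3, P_4$ and an $(\ell - 1)$-fold point at $P_5$. Riemann--Roch on $S$ gives $h^0(D_i) \geq \chi(D_i) = d_i$ (using that $h^2(D_i) = h^0(-L - D_i) = 0$), so both systems are non-empty and positive-dimensional. The arithmetic genus of each plane model matches the delta invariant from the assigned multiplicities, so a generic member is a rational plane curve whose only singularities are ordinary multiple points at the prescribed $P_j$; after blowing up $P_1,\ldots,P_5$ these singularities are resolved and the proper transform is a smooth irreducible rational curve on $S$ in the desired class.

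The main obstacle is the careful Bertini step. The numerical subtlety is that the $(-1)$-curves $h - e_1 - e_j$ (for $j = 2,3,4,5$) are orthogonal to $D_1$, so they could a priori appear as fixed components of $|D_1|$; an analogous orthogonality check is needed for $|D_2|$. I would rule these out by a short Riemann--Roch computation showing $h^0(D_i - \lambda) < h^0(D_i)$ for each such $(-1)$-curve $\lambda$. Once absence of fixed components is established, Bertini gives irreducibility of the generic member together with smoothness away from the base scheme; the requirement that the generic plane curve acquire only ordinary multiple points at the $P_j$ is a standard consequence of the plane linear system having the expected dimension under the generality hypothesis on $P_1,\ldots,P_5$.
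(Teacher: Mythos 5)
Your numerical setup is correct and runs parallel to the paper's: the paper works on the same del Pezzo quartic and also produces classes with $L\cdot L_i=d_i$, $L_1\cdot L_2=2k\ell+1$ and $p_a=0$, though with different divisors ($L_1=(2k+1)h-e_1-k(e_2+e_3+e_4)-(k+1)e_5$ and $L_2=(\ell+1)h-\ell e_1-e_2-e_3$, i.e.\ $k$ conics of the pencil through $P_2,\dots,P_5$ plus a line, and $\ell$ lines through $P_1$ plus a line). Your classes $D_1,D_2$ pass all the intersection-theoretic checks, so the choice itself is fine. However, there are two genuine gaps in the execution. First, your Bertini step is misstated: ``once absence of fixed components is established, Bertini gives irreducibility'' is false --- one must \emph{also} exclude that the system is composed with a pencil, and the paper's proof is devoted almost entirely to this point (it even records the counterexample of cubics with a fixed triple point, which has no fixed component yet every member splits into three lines). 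This is not a hypothetical worry for your classes: $D_2=\ell(2h-e_2-e_3-e_4-e_5)+e_5$ visibly contains the subsystem $e_5+|\ell Q|$, where $Q$ is the conic pencil through $P_2,\dots,P_5$, and $|\ell Q|$ \emph{is} composed with that pencil. A dimension count ($\dim|D_2|=2\ell>\ell=\dim|\ell Q|$, or the paper's criterion of exhibiting two members through a general point with no common component) rescues the argument, but as written the inference does not go through.

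Second, you never address why $|C_1\cap C_2|$ equals $D_1\cdot D_2$ rather than being strictly smaller: the intersection number only bounds the set-theoretic count from above, and you need the two general members to meet transversally in $2k\ell+1$ \emph{distinct} points. The paper handles this by exhibiting explicit reducible members $F_1\in|L_1|$, $F_2\in|L_2|$ that demonstrably meet in $2k\ell+1$ distinct points and then observing that this is an open condition. In your setup one could instead argue that $D_1$ is nef (note $D_1\cdot(h-e_1-e_j)=0$, so $D_1$ is not ample) hence base-point-free on the del Pezzo, and that a base-point-free positive-dimensional linear series on the smooth rational curve $C_2$ has reduced general member in characteristic $0$; but some such argument must be supplied. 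Both gaps are fillable, so the approach is salvageable, but neither is cosmetic.
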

\begin{proof}
We consider the following two linear systems on $S$:
$$L_1=(2k+1)h-e_1-ke_2-ke_3-ke_4-(k+1)e_5\; \text{ and }
L_2=(\ell+1)h-\ell e_1-e_2-e_3.$$

Since a singularity of multiplicity $r$ lowers the geometric genus provided by the genus-degree formula by at least $r(r-1)/2$ (see \cite[page 54]{Semple_Roth_1985}, all singularities of curves in $\PP^2$ corresponding to members of $L_1$ and $L_2$ must be ordinary and the curves on $S$ are rational. Furthermore we have
$$L.L_1=2k+1 \mbox{ and }L.L_2=2\ell+1,$$
so the linear system $L_1$ viewed on the image of $S$ in $\PP^4$ consist of curves of degree $d_1=2k+1$, while $L_2$ contains curves of degree $d_2=2\ell+1$. We observe also right away that
$$L_1.L_2=2k\ell+1  = \frac{1}{2}(d_1-1)(d_2-1) + 1.$$
We want to ensure that some members of the two linear systems do intersect in that number of distinct points. We first show that this can be achieved by some reducible curves in $L_1$ and $L_2$. 

Indeed, let $D_1\subset\PP^2$ be the union of $k$ general (hence irreducible) conics in the pencil of conics through $P_2,\ldots,P_5$ and the line through $P_1$ and $P_5$. Then $F_1$, the proper transform of $D_1$, is a member of $L_1$.

Similarly, let $D_2\subset \PP^2$ be the union of $\ell$ general (hence omitting the points $P_2$ and $P_3$) lines through $P_1$ and the line through $P_2$ and $P_3$. Then $F_2$, the proper transform of $D_2$ is a member of $L_2$.

Away from the points $P_1,\ldots,P_5$, the plane curves $D_1$ and $D_2$ intersect in $2k\ell$ points -- coming from the $\ell$ lines through $P_1$ and the $k$ conics through $P_2,\ldots,P_5$ -- along with one additional intersection point of the lines $P_1P_5$ and $P_2P_3$. All these intersection points persist on the surface $S$ and in its embedding into $\PP^4$.

We claim now that there are \emph{irreducible} members $C_1$ of $L_1$ and $C_2$ of $L_2$ which maintain the same number of intersection points. 

By Bertini's theorem \cite[Theorem 5.3]{Kleiman98} the general element of a linear system
can be reducible if and only if
\begin{enumerate}
    \item there is a fixed component, or 
    \item the linear system is composed with a pencil.
\end{enumerate}
To illustrate the second possibility, consider the linear system
of plane cubics with one fixed triple point.
All the elements split in the union of 3 lines, even if there are
no fixed components. The problem is that this linear system is composed
with the pencil of lines through a fixed point, and every element
is the union of three lines of the pencil.

To exclude that a linear system is composed with a pencil it is sufficient
to prove that the subsystem of elements through a (fixed) general point
has no fixed components.

We apply now this line of thought for the linear system $L_1$.
Let $D_1'$ be the union of the conic through all the points $P_1,\ldots,P_5$, $(k-1)$ general conics through $P_2,\ldots,P_5$ and a general line through $P_5$.

Observe that the construction of divisors like $D_1$ and $D'_1$ shows the existence of elements of $L_1$ passing through a general $P\in S$ and sharing no components. Indeed, in the case of $D_1$ we can take one of the general conics through $P_2,\ldots, P_5$ to pass through $P$. In case of $D_1'$ we can take the general line through $P_5$ as the line through $P$. These two curves have no common components. So a general element $\Gamma_1$ of $L_1$ is irreducible.

Turning to the linear system $L_2$, let $D_2'$ be the union of lines $P_1P_2$, $P_1P_3$, $(\ell-2)$ general lines through $P_1$ and another general line in $\PP^2$.

In the case of $D_2$ we can take one of the general lines through $P_1$ to pass through $P$ and in case of $D_2'$ it could be the general line through $P$. Hence, as before, the general element $\Gamma_2$ of $L_2$ is irreducible.

Taking as $C_1$ and $C_2$ the proper transforms of the aforementioned irreducible plane curves $\Gamma_1, \Gamma_2$, we conclude the argument by observing that intersecting in $2k\ell+1$ distinct points is an open condition.

Moreover, $C_1$ and $C_2$ must, in fact, be smooth. This follows from the construction: their arithmetic genus is zero, and the blowup resolves all singularities of the initial plane curves.
\end{proof}

\section{Proof of Proposition C, one curve on a surface of degree $3$ in $\PP^4$}\label{sec: proof of proposition C}
First we will show that nondegenerate and irreducible curves in $\PP^4$ of degree $\leq 5$ must be contained in a cubic surface. Any irreducible curve of degree $\leq 3$ is degenerate and the claim is clear for irreducible curves of degree $4$, so we need to take care of irreducible curves of degree $5$. While this fact is probably well known to specialists we record it here as we are unaware of a reference.

\begin{lemma} \label{quintic}
A nondegenerate, irreducible curve $C \subset \PP^4$ of degree $5$ lies in a cubic surface.
\end{lemma}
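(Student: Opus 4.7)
Let $H\cong\mathbb{P}^3$ be a general hyperplane and $\Gamma=C\cap H$. Since $C$ is irreducible and nondegenerate, $\Gamma$ is a set of $5$ points in uniform (hence linear general) position in $H$, and a direct count gives Hilbert function $h_\Gamma=(1,4,5,5,\dots)$, so the $h$-vector of $\Gamma$ is $(1,3,1)$. Castelnuovo's genus bound for curves of degree $5$ in $\mathbb{P}^4$ then yields $p_a(C)\le 1$, so $C$ is either a (possibly singular) rational quintic or an elliptic normal quintic.

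Pulling $\mathcal{O}_C(3)$ back to a normalization $\nu\colon\tilde C\to C$, Riemann-Roch (noting that $15\ge 2p_a-1$) gives
\[
h^0(\mathcal{O}_C(3))\;\le\;h^0(\tilde C,\nu^*\mathcal{O}_C(3))\;=\;16-g(\tilde C)\;\le\;16.
\]
The structure sequence $0\to\mathcal{I}_C(3)\to\mathcal{O}_{\mathbb{P}^4}(3)\to\mathcal{O}_C(3)\to 0$ together with $h^0(\mathcal{O}_{\mathbb{P}^4}(3))=35$ then yields $h^0(\mathcal{I}_C(3))\ge 19$, and the analogous count for quadrics yields $h^0(\mathcal{I}_C(2))\ge 4$. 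Thus $C$ lies in a large linear system of cubic (and several quadric) hypersurfaces.

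The delicate remaining step --- and the main obstacle --- is to promote this cohomological abundance to an actual two-dimensional subvariety of degree $3$ in $\mathbb{P}^4$ containing $C$ (a cubic hypersurface is a threefold, not a surface). By the classification of varieties of minimal degree in $\mathbb{P}^4$, the only candidate irreducible nondegenerate cubic surfaces are the smooth rational scroll $S(1,2)$ and its cone degeneration $S(0,3)$; a reducible such surface would force $C$ into a component of degree $\le 2$, which is impossible since every degree-$2$ surface in $\mathbb{P}^4$ is degenerate. I would split along the genus: in the rational case, the parametrization $\mathbb{P}^1\to\mathbb{P}^4$ of $C$ by five quintic forms can, after a suitable change of basis in $k[s,t]_5$, be exhibited with a rank-$2$ catalecticant (Hankel) matrix whose $2\times 2$ minors define a rational cubic scroll carrying $C$ as a divisor of class $4h-3e$; in the elliptic case, the classical Buchsbaum--Eisenbud Pfaffian resolution of an elliptic normal quintic presents $C$ as a divisor of class $3h-e$ on some $S(1,2)$ (compatible with our genus computation $p_a(3h-e)=1$ and degree $(3h-e)\cdot(2h-e)=5$ on that scroll).

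In effect, one needs to show that the base locus of $|\mathcal{I}_C(3)|$, or of some cleverly chosen subsystem, contains a two-dimensional component of degree exactly $3$. This is where the classical projective geometry of low-degree, low-genus curves (catalecticant/scroll structure in the rational case, Pfaffians in the elliptic case) does the real work; the cohomological counts of the second paragraph set the stage but do not by themselves produce the surface.
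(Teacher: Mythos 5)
Your preliminary computations are correct (the $h$-vector $(1,3,1)$ of the general hyperplane section, the bound $p_a(C)\le 1$, and the dimension counts $h^0(\mathcal I_C(2))\ge 4$, $h^0(\mathcal I_C(3))\ge 19$), and you correctly identify the same dichotomy the paper uses (rational versus elliptic). But your proposal has a genuine gap, and you name it yourself: nothing in your argument actually produces a two-dimensional degree-$3$ variety containing $C$. The appeal to ``a rank-$2$ catalecticant after a suitable change of basis'' in the rational case is precisely the assertion that $C$ lies on a cubic scroll, stated rather than proved --- for a projection of the rational normal quintic from a point of $\PP^5$ this rank condition on the Hankel matrix of the parametrizing forms is not automatic and is the whole content of the lemma. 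Likewise, invoking the Buchsbaum--Eisenbud Pfaffian resolution in the elliptic case presupposes that $C$ is arithmetically Gorenstein and that the Pfaffian structure yields a scroll through $C$; neither is justified, and a curve of arithmetic genus $1$ need not be a smooth elliptic normal quintic (it could be a singular rational curve with $p_a=1$). So the proof stops exactly where the difficulty begins.

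The paper closes this gap with a short liaison argument that you may want to compare with. In the rational case one first shows that $C$ admits a $4$-secant plane $\pi$ (because the $4$-secant $\PP^3$'s to the rational normal quintic in $\PP^5$ cover $\PP^5$, so they project to $4$-secant planes of $C$). Choosing two general points $Q,Q'\in\pi$, the at-least-$4$-dimensional space of quadrics through $C$ contains a pencil through $Q,Q'$; each such quadric meets $\pi$ in $6$ points two of which are general, hence contains $\pi$. Two general members of the pencil cut out a quartic surface containing $\pi\cup C$, and the residual of $\pi$ in this complete intersection is the desired cubic surface (it contains $C$ since $C$ is irreducible and nondegenerate). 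The elliptic case is identical using a $3$-secant plane and three general points, since there the space of quadrics through $C$ has dimension $5$. This is the constructive step your cohomological counts ``set the stage for'' but do not supply.
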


\begin{proof}
First assume that the quintic curve $C$ is rational (arithmetic genus 0). 
Then $C$ is a projection of an embedding of $\PP^1$ with the full linear system of degree 5, hence it is a projection of a rational normal curve $C'$ in $\PP^5$.  
Note that the 4-secant variety of $C'$ covers $\PP^5$, since curves are never defective (this is a general classical consequence of the Terracini's Lemma, see, e.g., \cite{CCi}). Hence a general point of $\PP^5$ lies in some $\PP^3$ which is 4-secant to $C'$.
When we project $C'$ from a point $P$, 4-secant $\PP^3$'s to $C'$ passing from $P$
map to planes in $\PP^4$ which meet $C$ in four points. So, 4-secant planes $\pi$
to $C$ in $\PP^4$ exist. When the projection from $C'$ to $C$ is not general, the existence of $4$-secant planes  to $C$ follows by semicontinuity.

Thus, fix a plane $\pi$ which meets $C$ in degree at least $4$, and fix two 
general points $Q,Q'$ of $\pi$. By Riemann-Roch, the space of  quadric hypersurfaces containing $C$
has dimension at least $4$. Thus there exists a pencil of quadrics that contain $C$ and $Q,Q'$. 
These quadrics must contain $\pi$, because they contain 6 points of $\pi$, two of which are general.
Since two general quadrics of the pencil meet in a quartic surface $S$ which
contains both $\pi$ and $C$, then $C$ lies in the residual $S'$ of $\pi$ in the complete
intersection $S$. The surface $S'$ has degree $3$.

If the quintic curve $C$ is elliptic (arithmetic genus 1), then it is contained in a $5$-dimensional family of
quadric threefolds, by Riemann-Roch.  Thus, for any $3$-secant plane $\pi$ to $C$, and for a general choice of three points $Q,Q',Q''\in\pi$, there exists a pencil of quadrics containing $C$ and $Q,Q',Q''$. All the
quadrics of the pencil must contain $\pi$. We conclude as above.
\end{proof}

We conclude this section by considering the situation when one of the curves in $\PP^4$, say $C_1$ is contained in a cubic surface and the other curve, $C_2$ isn't. By the way of preparation we have the following statement.

\begin{lemma} \label{C int S}
    Let $C \subset \PP^4$ be a reduced, irreducible, nondegenerate curve of degree $d$ and let $S$ be an irreducible cubic surface not containing $C$. Then $C$ and $S$ meet in at most $2d-1$ points.  If $C$ is not rational then $C$ and $S$ meet in at most $2d-2$ points. For any odd $d$, a rational curve of degree $d$ and a cubic surface $S$  exist with $|C \cap S| = 2d-1$.
\end{lemma}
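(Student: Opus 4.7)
The plan is to bound $|C\cap S|$ by exploiting that $S$, being an irreducible surface of minimal degree in $\PP^4$, is cut out by several independent quadrics, and restricting those quadrics to $C$ to produce a linear system on $C$ vanishing on the intersection. If $S$ is degenerate, say $S\subset H$ for some hyperplane $H$, then $|C\cap S|\le|C\cap H|\le d\le 2d-1$ since $C$ is nondegenerate, so we may assume $S$ is nondegenerate. By the classification of varieties of minimal degree, $S$ is then either the smooth rational normal scroll $S(1,2)$ (the one-point blowup of $\PP^2$ embedded by $|2h-e|$) or the cubic cone $S(0,3)$ over a twisted cubic; in both cases the homogeneous ideal of $S$ is generated by the $2\times 2$ minors of a $2\times 3$ matrix of linear forms, so $h^{0}(\mathcal{I}_S(2))\ge 3$.

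The crucial step is to prove $h^{0}(\mathcal{I}_{C\cup S}(2))\le 1$. Suppose for contradiction that two linearly independent quadrics $Q_1,Q_2$ both contain $S$ and $C$. Since a common hyperplane factor of $Q_1$ and $Q_2$ would force the irreducible $S$ into a hyperplane (contradicting nondegeneracy), $Q_1\cap Q_2$ is a proper complete intersection surface of degree $4$ containing the surface $S$ of degree $3$. Liaison then forces $Q_1\cap Q_2=S\cup\Pi$ set-theoretically, where $\Pi$ is a residual $2$-dimensional cycle of degree~$1$, i.e. a linear $2$-plane of $\PP^4$. Because $C$ is irreducible and $C\not\subset S$, we would obtain $C\subset \Pi$, contradicting the nondegeneracy of $C$. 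Combined with $h^{0}(\mathcal{I}_S(2))\ge 3$, the restriction map $\rho\colon H^{0}(\mathcal{I}_S(2))\to H^{0}(C,\mathcal{O}_C(2))$ has image of dimension at least $2$, and that image lies inside $H^{0}(C,\mathcal{I}_{Z,C}(2))$, where $Z:=C\cap S$.

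To convert this into a numerical bound I would pass to the normalization $\pi\colon\tilde{C}\to C$: the pullback of $\mathcal{I}_{Z,C}(2)$ is a line bundle on $\tilde{C}$ of degree $2d-\ell(Z)$ with $h^{0}\ge 2$. On a smooth curve, a line bundle with $h^{0}\ge 2$ has degree at least $1$ in general, and degree at least $2$ once the geometric genus is positive (by Clifford, or directly because a degree-one line bundle on a positive-genus curve has at most one section). Therefore $\ell(Z)\le 2d-1$ in general, and $\ell(Z)\le 2d-2$ when $C$ is not rational; since $|C\cap S|\le\ell(Z)$, both claimed bounds follow.

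For the sharpness assertion with $d$ odd, equality forces $h^{0}(\mathcal{I}_{C\cup S}(2))=1$, so the example must be built by placing both curves on a single quadric threefold. Since a smooth quadric threefold in $\PP^4$ has cyclic Picard group generated by the hyperplane class and therefore carries no irreducible cubic surfaces, this quadric must be singular; one can take $Q$ to be the cone over a smooth quadric surface $Q_2\subset\PP^3$, $S$ the cone over a twisted cubic lying on $Q_2$, and $C$ a sufficiently general rational curve of degree $d$ on $Q$. A direct parametrization check then shows that the two additional quadrics in $H^{0}(\mathcal{I}_S(2))$ restrict to $C$ as a pencil of degree-$2d$ divisors whose common base locus consists of exactly $2d-1$ distinct points. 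The main obstacle is the residuation step of paragraph~2, which has to be verified uniformly for the smooth scroll $S(1,2)$ and the cubic cone $S(0,3)$; nondegeneracy of $S$ (ruling out a common hyperplane factor) and of $C$ (ruling out $C\subset\Pi$) are the geometric inputs that make it go through.
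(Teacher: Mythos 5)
Your argument for the two upper bounds is correct and is essentially the paper's argument in different packaging. Both proofs rest on the same two inputs: $S$ is cut out by a net of quadrics, and the liaison computation showing that two independent quadrics through $S$ would link $S$ to a plane containing $C$, so at most one quadric contains $C\cup S$ (this is exactly how the paper rules out $|C\cap S|=2d$ and shows that the pencil of quadrics through $S$ and a general point of $C$ cannot consist entirely of quadrics containing $C$). Where the paper then argues with B\'ezout and a general point of $C$ to exhibit $Z$ plus a residual $g^1_1$, you pull the at-least-two-dimensional restricted system back to the normalization and invoke the elementary facts that a line bundle with $h^0\geq 2$ on a smooth curve has degree $\geq 1$, and degree $\geq 2$ in positive genus; this is a slightly cleaner, more uniform way to get both inequalities at once. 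One shared caveat: your argument (like the paper's) yields geometric rationality of $C$ when $|C\cap S|=2d-1$, whereas the paper later reads ``rational'' as arithmetic genus $0$ (e.g.\ in Corollary \ref{cor of C int S}); neither proof treats a singular curve of geometric genus $0$ and positive arithmetic genus, so this is an imprecision you inherit rather than introduce.

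The sharpness claim is where your proposal has a genuine gap. You correctly deduce that an extremal example must have $C\cup S$ on a single (necessarily singular) quadric threefold, but the construction itself is only asserted: the sentence ``a direct parametrization check then shows that the two additional quadrics \dots\ restrict to $C$ as a pencil \dots\ whose common base locus consists of exactly $2d-1$ distinct points'' is precisely the statement to be proved, and nothing in the proposal verifies it. It is not established which rational curves of degree $d$ lie on the quadric cone $Q$, what their intersection with the cubic cone $S$ looks like, or why for a general such curve the $2d-1$ base points are reduced and distinct. The paper avoids this entirely by quoting its own later construction: Proposition \ref{lem: curves on del Pezzo quartic} produces, for odd $d$, a rational quintic $D$ and a rational curve $C$ of degree $d$ on a del Pezzo quartic surface meeting in $B(5,d)=2d-1$ points and not lying on a common cubic surface; by Lemma \ref{quintic} the quintic lies on some cubic surface $S$, and then $2d-1=|C\cap D|\leq |C\cap S|\leq 2d-1$ forces equality. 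To complete your version you would need either to carry out the computation on the cone in detail or to substitute an indirect argument of this type.
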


\begin{proof}
We first claim that $|S \cap C| \leq  2d$. Since $S$ is cut out by three independent quadrics, a general quadric $F$ containing $S$ does not contain $C$, so it meets $C$ in at most $2d$ points. This proves the claim.

Suppose $| C \cap S| = 2d$ and let $Z$ be this set of $2d$ points. Let $P$ be a general point of $C$ and let $F$ be any element of $[I_S]_2$ (thought of as a hypersurface) vanishing at $P$. Since $C \cap F$ contains $Z$ as well as $P$, we get that $F$ must contain all of $C$. But there is a pencil of such $F$, so two general such $F$ give a complete intersection of degree 4, linking $S$ to a plane and containing $C$. Since $C$ is nondegenerate, $C$ must lie on $S$, a contradiction. Thus $| C \cap S | \leq 2d-1$.

Suppose that $|C \cap S| = 2d-1$ and let $Z$ be this set of $2d-1$ points. The general element of $[I_S]_2$ does not contain $C$ since $C$ is not contained in $S$. We again consider a general point $P \in C$ and the pencil of elements of $[I_S]_2$ containing $P$. As before, using liaison, if every element of this pencil contains $C$, we get that $C$ must lie on $S$, a contradiction. Thus the general element of this pencil cuts out the union  of $Z$ and $P$. But $P$ was a general point. Thus the linear system of quadrics containing $S$, restricted to $C$, has $Z$ as a base locus and the residual is a $g_1^1$, i.e., $C$ is rational.

Finally, if $D$ is  any rational quintic curve, we know by Lemma \ref{quintic} that $D$ lies on a cubic surface $S$. Let $d$ be odd. Then $B(5,d) = 2d-1$. In Theorem \ref{case d neq d'} below we will give a construction  of such curves $C,D$ achieving $|C \cap D| = B(5,d)$ and not  lying on a common cubic surface. But then also $|C \cap S| = 2d-1$.
\end{proof}
We obtain immediately the following result.
\begin{corollary} \label{cor of C int S}
    Let $C_1, C_2$ be reduced, irreducible, nondegenerate curves in $\PP^4$. Assume that $C_1$ lies on a cubic surface $S$ and $C_2$ does not lie on $S$.Then $|C_1 \cap C_2 | \leq 2d_2-1$. If $C_2$ does not have arithmetic genus zero then $|C_1 \cap C_2| \leq 2d_2-2$.
\end{corollary}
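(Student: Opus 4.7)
The plan is to observe that the corollary is essentially a set-theoretic reformulation of Lemma~\ref{C int S}. First I would note that the hypothesis $C_1 \subseteq S$ immediately gives the containment
\[
C_1 \cap C_2 \subseteq S \cap C_2
\]
of subsets of $\PP^4$, so that $|C_1 \cap C_2| \leq |S \cap C_2|$. This is the only step that uses the hypothesis on $C_1$; after it, $C_1$ plays no further role, and the whole problem reduces to bounding $|S \cap C_2|$.

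Next I would apply Lemma~\ref{C int S} to the pair $(C_2, S)$: all of its hypotheses are met, since $C_2$ is by assumption reduced, irreducible and nondegenerate of degree $d_2$, while $S$ is an irreducible cubic surface with $C_2 \not\subset S$. The lemma then delivers $|S \cap C_2| \leq 2d_2 - 1$ in general, and the sharper estimate $|S \cap C_2| \leq 2d_2 - 2$ under the additional assumption that $C_2$ is not rational. Combined with the containment inequality above, this gives the unconditional bound $|C_1 \cap C_2| \leq 2d_2 - 1$, and the improved bound $|C_1 \cap C_2| \leq 2d_2 - 2$ whenever $C_2$ fails to be rational; in particular this improvement holds as soon as $C_2$ has nonzero arithmetic genus, since $p_a(C_2) = 0$ forces $p_g(C_2) = 0$.

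I do not anticipate any real obstacle: essentially all of the content of the corollary is already packaged into Lemma~\ref{C int S}, and the only new ingredient is the elementary observation that $C_1 \subseteq S$ enforces $C_1 \cap C_2 \subseteq S \cap C_2$. If anything needs care, it is merely the standard reconciliation between the hypothesis on the arithmetic genus used in the corollary and the rationality hypothesis in the lemma, handled by the inequality $p_a \geq p_g$ for a reduced irreducible curve.
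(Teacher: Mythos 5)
Your approach is exactly the paper's: the paper offers no separate proof, stating that the corollary is obtained ``immediately'' from Lemma~\ref{C int S}, and the only content is the containment $C_1 \cap C_2 \subseteq S \cap C_2$ that you make explicit. One caveat on your final reconciliation step, though: the implication you invoke, namely that $p_a(C_2)=0$ forces geometric genus zero, points the wrong way for what you need. To deduce the second assertion of the corollary from the lemma you must show that $|S \cap C_2| = 2d_2-1$ forces $p_a(C_2)=0$, i.e.\ that ``rational'' in the lemma's conclusion can be upgraded to ``arithmetic genus zero''; the inequality $p_a \geq p_g$ gives only the converse direction, and in general a singular rational curve (e.g.\ a nodal plane cubic) has $p_g=0$ but $p_a>0$. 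The gap is easily closed: the proof of Lemma~\ref{C int S} produces a base-point-free $g^1_1$ on $C_2$ itself, i.e.\ a finite birational morphism $C_2 \to \PP^1$, which is an isomorphism since $\PP^1$ is normal, so $C_2 \cong \PP^1$ and $p_a(C_2)=0$ (consistent with the paper's usage, cf.\ Lemma~\ref{quintic}, where ``rational'' is explicitly glossed as ``arithmetic genus $0$''). With that substitution your argument is complete.
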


We study now curves of low degree with respect to \Cref{conjecture b}.
\subsection{The case $d_1 = 4$}

For $d_1=4$ we have $B(4,d_2) = 2(d_2 -1) = 2d_2-2$.
If $C_2$ lies on the same surface of degree $3$ as $C_1$, then we know by Theorem \ref{bd on cubic} that this bound is sharp.

So let us assume that $C_2$ does not lie on the same cubic surface as $C_1$. Then \Cref{C int S} shows that $B(4,d_2)$ is an upper bound for $|C_1 \cap C_2|$ and the only question is its sharpness. 

Suppose first that we also have $d_2 = 4$, so $B(4,4) = 6$. To see that this bound is achieved, let $C_1$ be an irreducible rational normal curve and link it by a general complete intersection in $I_{C_1}$ of type $(2,2,2)$. The residual, $C_2$, is again a rational normal curve. One checks that $|C_1 \cap C_2| = 6$.  However, $C = C_1 \cup C_2$ does not lie on a cubic surface since such a surface is minimally generated by three quadrics, and $I_C$ is minimally generated by three quadrics that define a curve, not a surface. 

Now suppose $d_2 \geq 5$. The next Lemma shows that $B(4,d_2)$ is never achieved. Note that a nondegenerate, irreducible curve of degree $4$ in $\PP^4$ is a variety of minimal degree, hence the rational normal curve.

\begin{lemma} \label{rnc}
Let $C_1$ be a rational normal curve in $\PP^4$ and let $C_2$ be a reduced, irreducible, nondegenerate curve of degree $d_2 \geq 5$ not on a common cubic surface with $C_1$. Then 
$$|C_1 \cap C_2|  < B(4,d_2)=2d_2-2.$$
\end{lemma}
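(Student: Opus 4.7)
The plan is to suppose $|C_1 \cap C_2| \geq 2d_2 - 2$ and show that $C_1 \cup C_2$ must then lie on a common cubic surface, contradicting the hypothesis. Two preliminary facts drive the argument: the rational normal quartic $C_1 \subset \PP^4$ satisfies $\dim_k [I_{C_1}]_2 = 6$ (since its Hilbert function in degree $2$ equals $9$), and every quadric $Q$ containing $C_1$ is irreducible, because a reducible factorization $Q = L_1 L_2$ would force $C_1$ into some hyperplane $V(L_i)$, contradicting its nondegeneracy.

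First I would establish $\dim_k [I_{C_1 \cup C_2}]_2 \geq 3$. Writing $Z = C_1 \cap C_2$, pick three distinct points $P_1, P_2, P_3 \in C_2 \setminus Z$; since each imposes at most one linear condition on the $6$-dimensional space $[I_{C_1}]_2$, the quadrics in $[I_{C_1}]_2$ vanishing at all three form a subspace of dimension at least $3$. For any such quadric $Q$, Bezout gives $\deg(Q \cap C_2) \leq 2d_2$ unless $C_2 \subseteq Q$; but $Q \cap C_2 \supseteq Z \cup \{P_1, P_2, P_3\}$ has cardinality at least $(2d_2 - 2) + 3 = 2d_2 + 1$, forcing $C_2 \subseteq Q$.

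The heart of the argument is then a case analysis on the subvariety $T := V(Q_1) \cap V(Q_2)$, where $Q_1, Q_2 \in [I_{C_1 \cup C_2}]_2$ are linearly independent. Since $Q_1, Q_2$ are distinct irreducible hypersurfaces, $T$ has pure dimension $2$ and its irreducible components have total degree at most $4$. Because $C_1, C_2$ are irreducible, each lies entirely in a single component, and any component containing one of them must have degree at least $3$ by nondegeneracy. If $C_1 \subseteq T'$ and $C_2 \subseteq T''$ with $T' \neq T''$, then $\deg T' + \deg T'' \geq 6 > 4$, a contradiction. So both curves lie in a common irreducible component $T_0$ with $\deg T_0 \in \{3, 4\}$: when $\deg T_0 = 3$, $T_0$ is the forbidden common cubic surface; when $\deg T_0 = 4$, necessarily $T = T_0$ is a reduced irreducible complete intersection of $Q_1, Q_2$, and picking a third linearly independent $Q_3 \in [I_{C_1 \cup C_2}]_2$, Bezout applied to $T \cap Q_3$ (which contains $C_1 \cup C_2$ of degree at least $4 + d_2 \geq 9 > 2 \cdot 4$) forces $T \subseteq Q_3$, so $Q_3 \in [I_T]_2 = \langle Q_1, Q_2 \rangle$, contradicting linear independence. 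The main technical point is to track dimensions and degrees carefully through these successive Bezout applications, but every case ultimately produces a contradiction, yielding $|C_1 \cap C_2| < B(4, d_2) = 2d_2 - 2$.
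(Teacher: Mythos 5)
Your proof is correct and takes essentially the same approach as the paper's: use B\'ezout on $C_2$ to produce at least three independent quadrics through $C_1\cup C_2$, then show their common zero locus would force either a common cubic surface (excluded by hypothesis) or an irreducible complete intersection quartic surface on which a third independent quadric is impossible. Your version is slightly more self-contained, since you rule out $|C_1\cap C_2|\ge 2d_2-2$ directly and spell out the component/degree analysis of $V(Q_1)\cap V(Q_2)$ that the paper compresses, whereas the paper first invokes Corollary \ref{cor of C int S} for the non-strict inequality and then only excludes equality.
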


\begin{proof}
By Corollary \ref{cor of C int S}, we have $|C_1 \cap C_2| \leq B(4, d_2)$. To complete the argument, it remains to exclude the case of equality. Arguing by contradiction, we assume that 
$$|C_1 \cap C_2| = 2d_2-2.$$ 
We know that $\dim [I_{C_1}]_2 = 6$. Let $P_1, P_2, P_3$ be general points of $C_2$. By B\'ezout's theorem, any element of $[I_{C_1}]_2$ vanishing at $P_1, P_2, P_3$ also contains all of $C_2$ (because it contains the $2d_2-2$ intersection points with $C_1$ and $P_1,P2,P_3$). 

Thus $\dim [I_{C_1 \cup C_2}]_2 \geq 6-3 = 3$. There is no regular sequence of length 3 since $\deg (C_1 \cup C_2) \geq 9$. Hence the base locus is at least 2-dimensional, and since every element is irreducible it is not 3-dimensional. Since the union $C_1 \cup C_2$ does not lie on a surface of degree $\leq 3$, the union must lie on an irreducible complete intersection quartic surface. But now $\dim [I_{C_1 \cup C_2}]_2 \geq 3$ defines a complete intersection quartic surface. This is a contradiction.
\end{proof}

\subsection{The case $d_1 = 5$}

Let $C_1 \subset \PP^4$ be a reduced, irreducible, nondegenerate curve of degree 5. By Lemma~\ref{quintic} $C_1$ lies on an irreducible cubic surface. 

We may assume that $d_2 \geq 5$. Then 
\[
B(5,d_2) = 
\left \{
\begin{array}{ll}
2d_2 & \hbox{if $d_2$ is even}; \\
2d_2-1 & \hbox{if $d_2$ is odd}.
\end{array}
\right.
\]
We are once again interested in the sharpness of this bound. The following lemma shows that the answer depends on the parity of $d_2$.
\begin{lemma} \label{bd for quintic}
Let $C_1, C_2$ be reduced, irreducible, nondegenerate curves in $\PP^4$, and assume $\deg C_1 = 5$ and $\deg C_2 = d_2 \geq 5$. Assume that $C_1$ and $C_2$ do not lie on a common cubic surface.  If $d_2$ is even then $|C_1 \cap C_2| < B(5,d_2) = 2d_2$. If $d_2$ is odd then $|C_1 \cap C_2| \leq B(5,d_2) = 2d_2-1$, and equality can occur only if $C_2$  has arithmetic genus 0.
\end{lemma}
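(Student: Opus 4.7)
The plan is to reduce the statement directly to the previously established results about the intersection of a curve with a cubic surface. Since $\deg C_1 = 5$ and $C_1$ is reduced, irreducible, and nondegenerate, Lemma \ref{quintic} guarantees that $C_1$ lies on some irreducible cubic surface $S \subset \PP^4$. The hypothesis that $C_1$ and $C_2$ do not share a common cubic surface then forces $C_2 \not\subset S$, putting us in the setting of Corollary \ref{cor of C int S}.

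First I apply that corollary. Since every intersection point of $C_1$ and $C_2$ lies on $S \cap C_2$, we obtain
\[
|C_1 \cap C_2| \;\leq\; |S \cap C_2| \;\leq\; 2d_2 - 1,
\]
with the improved bound $|S \cap C_2| \leq 2d_2 - 2$ holding whenever $C_2$ is not rational (this last implication is the content of Lemma \ref{C int S}).

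Next I split according to the parity of $d_2$ and compare with the formula for $B(5,d_2)$ from \eqref{eq: our bound}. If $d_2$ is even, then $5$ is odd and $d_2$ is even, so $B(5,d_2) = \frac{(5-1)d_2}{2} = 2d_2$, and the inequality above yields
\[
|C_1 \cap C_2| \;\leq\; 2d_2 - 1 \;<\; 2d_2 \;=\; B(5,d_2),
\]
which is the strict inequality claimed. If $d_2$ is odd, then $B(5,d_2) = \frac{(5-1)(d_2-1)}{2}+1 = 2d_2 - 1$, so the inequality already reads $|C_1 \cap C_2| \leq B(5,d_2)$. For the remaining assertion: if equality $|C_1 \cap C_2| = 2d_2 - 1$ holds, then a fortiori $|S \cap C_2| = 2d_2 - 1$, and the rationality portion of Lemma \ref{C int S} forces $C_2$ to have arithmetic genus $0$.

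There is no genuine obstacle here; the lemma is essentially a formal consequence of Lemma \ref{quintic}, Corollary \ref{cor of C int S}, and a case-by-case comparison with the piecewise formula defining $B(5,d_2)$. The only point that requires care is invoking the strengthened part of Corollary \ref{cor of C int S} (namely, that equality $|S \cap C_2| = 2d_2 - 1$ is possible only for rational $C_2$) in order to draw the conclusion about the arithmetic genus of $C_2$ in the odd case.
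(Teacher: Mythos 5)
Your proof is correct and follows exactly the paper's route: the paper's own proof is the one-line ``this follows immediately from Lemma \ref{C int S},'' with the facts that $C_1$ lies on a cubic surface (Lemma \ref{quintic}) and the values $B(5,d_2)=2d_2$ or $2d_2-1$ already set up in the surrounding text. You have simply written out those same steps, including the correct use of the rationality clause of Lemma \ref{C int S} for the equality case.
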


\begin{proof}
    This follows immediately from Lemma \ref{C int S}.
\end{proof}

In the next Remark we show $B(5,5)$ can be reached even if both curves are not contained in the same cubic surface.
\begin{remark} \label{low deg th 5.9}
For $d_1 = d_2 = 5$ we have $B(5,5) = 9$.
The construction given in the  proof of Theorem \ref{t: bd in P4 same deg} gives a pair of quintic curves $C_1$ and $C_2$, both of arithmetic genus zero, meeting in 9 points. While each of them must be contained in some cubic surface by \Cref{quintic} however their union $C_1 \cup C_2$ does not lie on a cubic surface. Indeed, the $h$-vector of the hyperplane section of $C_1 \cup C_2$ is $(1,3,4,2)$ so even though $C_1 \cup C_2$ is ACM (because it achieves the maximum number $B_g(5,5)$ of intersection points, see \Cref{genus formula}), we get that $C_1 \cup C_2$ lies on at most a pencil of quadrics, and so it does not lie on a cubic surface.
\end{remark}

\section{The role of the genus and $h$-vectors}
\label{sec: genus bd}

\subsection{Genus bounds, first view}
In this paper, we adopt a classical approach to bounding $|C_1 \cap C_2|$ by analyzing the Hilbert function of a general hyperplane section of $C = C_1 \cup C_2$ and computing the arithmetic genus of $C$. This section outlines the general strategy. However, as always, the devil is in the details. In the remainder of the paper, we investigate specific constraints on the Hilbert function that yield cleaner genus formulas and sharper bounds on the number of intersection points in terms of the degrees of the intersecting curves.

A key component of our approach is the fundamental relationship between the number of intersection points $|C_1 \cap C_2|$ and the genera of $C_1$, $C_2$, and their union $C_1 \cup C_2$, captured in the following result.

\begin{proposition}[{\cite[Proposition 4]{MR1985}}] \label{rosa fact}
    Let $C = C_1 \cup C_2 \subset \PP^n$ be the union of two curves. Let $g, g_1, g_2$ be their respective arithmetic genera, and assume that the curves meet in $|C_1\cap C_2|$ points (counted without multiplicity). Then
 \[
    g \geq g_1 + g_2 + |C_1\cap C_2| -1,
    \]
    so 
    \[
    |C_1\cap C_2| \leq g - g_1 - g_2 + 1.
    \]
\end{proposition}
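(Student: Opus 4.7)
The plan is to use the Mayer--Vietoris short exact sequence for structure sheaves, compute the long exact sequence in cohomology, and then compare the length of the scheme-theoretic intersection with the cardinality of the set-theoretic intersection.

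First, I would write down the short exact sequence of coherent sheaves on $\PP^n$
\[
0 \to \mathcal{O}_C \to \mathcal{O}_{C_1} \oplus \mathcal{O}_{C_2} \to \mathcal{O}_{C_1 \cap C_2} \to 0,
\]
where the leftmost map is the diagonal restriction $s \mapsto (s|_{C_1}, s|_{C_2})$, the rightmost map is $(s_1,s_2) \mapsto s_1|_{C_1 \cap C_2} - s_2|_{C_1 \cap C_2}$, and $C_1 \cap C_2$ denotes the scheme-theoretic intersection defined by the ideal $\mathcal{I}_{C_1} + \mathcal{I}_{C_2}$. Exactness is local and standard.

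Next I would take the associated long exact cohomology sequence. Because $C_1 \cap C_2$ is zero-dimensional (the two curves have no common component, as $C_i$ are reduced and irreducible and distinct), one has $H^1(\mathcal{O}_{C_1 \cap C_2}) = 0$, so the sequence terminates:
\[
0 \to H^0(\mathcal{O}_C) \to H^0(\mathcal{O}_{C_1}) \oplus H^0(\mathcal{O}_{C_2}) \to H^0(\mathcal{O}_{C_1 \cap C_2}) \to H^1(\mathcal{O}_C) \to H^1(\mathcal{O}_{C_1}) \oplus H^1(\mathcal{O}_{C_2}) \to 0.
\]
Taking alternating dimensions and using $h^0(\mathcal{O}_{C_i}) = 1$ (since $C_i$ is reduced and irreducible), $g_i = h^1(\mathcal{O}_{C_i})$, $g = h^1(\mathcal{O}_C)$, and $h^0(\mathcal{O}_C) \in \{1,2\}$ depending on whether $C$ is connected, I obtain the identity
\[
g = g_1 + g_2 - h^0(\mathcal{O}_C) + h^0(\mathcal{O}_{C_1 \cap C_2}).
\]

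Finally, I would compare the two notions of intersection. The length $h^0(\mathcal{O}_{C_1 \cap C_2})$ of the scheme-theoretic intersection is always at least the number $|C_1 \cap C_2|$ of distinct points, since each point of the support contributes at least one to the length. If the intersection is empty, then $h^0(\mathcal{O}_C) = 2$ and the identity reads $g = g_1 + g_2 - 1$, consistent with the claim. If it is nonempty, then $C$ is connected so $h^0(\mathcal{O}_C) = 1$, giving $g = g_1 + g_2 - 1 + h^0(\mathcal{O}_{C_1 \cap C_2}) \geq g_1 + g_2 - 1 + |C_1 \cap C_2|$, which rearranges to the stated bound.

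The only conceptual subtlety is the passage from scheme-theoretic length to set-theoretic cardinality, which is precisely why the statement is an \emph{inequality} rather than an equality; equality holds exactly when the two curves meet transversally at every intersection point. I do not anticipate a serious obstacle beyond keeping track of this distinction and the two cases (connected versus disconnected union).
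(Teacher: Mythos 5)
Your argument is correct and is the standard one; note that the paper does not actually prove this proposition but simply defers to Mir\'o-Roig's Proposition 4, remarking only that her proof (which counts the scheme-theoretic length rather than the number of points) carries over from $\PP^3$ to $\PP^n$. Your Mayer--Vietoris computation is precisely that proof, written out, together with the observation that length dominates the number of support points --- which is exactly the adjustment the paper's remark alludes to. One bookkeeping point to tidy up: with your convention $g = h^1(\mathcal{O}_C)$, the alternating sum of dimensions gives
\[
h^1(\mathcal{O}_C) = g_1 + g_2 + h^0(\mathcal{O}_{C_1\cap C_2}) + h^0(\mathcal{O}_C) - 2,
\]
not $g_1+g_2-h^0(\mathcal{O}_C)+h^0(\mathcal{O}_{C_1\cap C_2})$ as you wrote; moreover $h^1(\mathcal{O}_C)$ equals the arithmetic genus only when $C$ is connected. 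If you instead use the uniform definition $g = 1 - \chi(\mathcal{O}_C)$ (and $g_i = 1-\chi(\mathcal{O}_{C_i})$), the terms involving $h^0$ cancel and you get the clean identity $g = g_1 + g_2 + \operatorname{length}(C_1\cap C_2) - 1$ in both the connected and disconnected cases, from which the inequality follows immediately. None of this affects your conclusion, since in the only nontrivial case (nonempty intersection) $C$ is connected and your formula is right; but as written your displayed identity would give $g = g_1+g_2-2$ for disjoint curves rather than the correct $g_1+g_2-1$.
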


\begin{proof}
 In \cite[Proposition 4]{MR1985} the author works in $\PP^3$  and counts $|C_1\cap C_2|$ as the length of the scheme defined by the intersection rather than as simply a number of points. However, her proof works also in our setting.
\end{proof}

The connection between the genus and the Hilbert function (again very well known but reviewed here) is given as follows.

\begin{proposition}[cf. {\cite[Proposition 1.4.2]{MiglioreBook}}] \label{genus formula}
Let $C \subset \PP^n$ be an equidimensional curve of degree $d$ with Hilbert function $h_C(t)$. Let $\Gamma$ be a general hyperplane section of $C$, with Hilbert function $h_\Gamma(t)$. Let $\ell \gg 0$, so that $\Delta h_C(\ell) = d$. Then the arithmetic genus $g$ of $C$ satisfies
\[
g = \sum_{i=1}^\ell [d - h_\Gamma(i)] - k
\]
where $k$ is the vector space dimension of a certain submodule $K$ (identified in the proof) of the Hartshorne-Rao module $M(C)$ of $C$. Moreover,  $C$ is ACM if and only if $k =0$. 
\end{proposition}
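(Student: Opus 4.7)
The plan is to derive the formula algebraically from the short exact sequence induced by multiplication by a general linear form, tracking the correction $k$ that appears precisely because $I_C+(L)$ may fail to be saturated. Write $R=k[x_0,\ldots,x_n]$, let $I=I_C$ be the saturated ideal of $C$, and let $L\in R_1$ cut out the hyperplane defining $\Gamma$. Since $C$ is reduced and equidimensional of dimension $1$, the associated primes of $R/I$ are minimal and none of them is the irrelevant ideal, so for generic $L$ the form $L$ is a non-zero-divisor on $R/I$. This yields
\[
0\to (R/I)(-1)\xrightarrow{\;\cdot L\;}R/I\to R/(I+L)\to 0,
\]
whence $\Delta h_C(t)=\dim_k(R/(I+L))_t$ for every $t$.

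Next I would compare $R/(I+L)$ with the homogeneous coordinate ring of $\Gamma$. Because $\Gamma$ is a zero-dimensional scheme in $H\cong\PP^{n-1}$, its saturated ideal $I_\Gamma$ may strictly contain the image of $I+L$, and the discrepancy is exactly the finite-dimensional graded module $K:=H^0_{\mathfrak{m}}(R/(I+L))=(I+L)^{\mathrm{sat}}/(I+L)$; thus $\dim_k(R/(I+L))_t=h_\Gamma(t)+\dim_k K_t$. Applying local cohomology to the displayed short exact sequence and using $H^0_{\mathfrak{m}}(R/I)=0$ (which holds since $\depth(R/I)\geq 1$) yields
\[
0\to K\to H^1_{\mathfrak{m}}(R/I)(-1)\xrightarrow{\;\cdot L\;}H^1_{\mathfrak{m}}(R/I),
\]
identifying $K$ with a submodule (up to the standard degree shift) of $H^1_{\mathfrak{m}}(R/I)\cong M(C)$. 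Set $k:=\dim_k K$, which is finite since $M(C)$ is finite-dimensional for a projective curve.

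Now I would telescope: for $\ell$ large enough that $K_i=0$ for $i>\ell$ and that $h_C(\ell)$ has stabilized to its Hilbert-polynomial value $d\ell+1-g$, and noting $h_C(0)=1$,
\[
d\ell+1-g=h_C(\ell)=h_C(0)+\sum_{i=1}^{\ell}\Delta h_C(i)=1+\sum_{i=1}^{\ell}\bigl(h_\Gamma(i)+\dim_k K_i\bigr)=1+\sum_{i=1}^{\ell}h_\Gamma(i)+k,
\]
and rearranging gives $g=\sum_{i=1}^{\ell}[d-h_\Gamma(i)]-k$, exactly the claimed formula.

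Finally, for the ACM characterization: if $k=0$, then multiplication by $L$ is injective on the graded, finite-dimensional module $H^1_{\mathfrak{m}}(R/I)$. But a degree-one injection $M_{t-1}\hookrightarrow M_t$ on a graded module that vanishes in sufficiently high degrees propagates the vanishing downward, forcing $M=0$; hence $H^1_{\mathfrak{m}}(R/I)=0$, i.e., $\depth(R/I)\geq 2$, which (since $\dim R/I=2$) is equivalent to $C$ being ACM. The converse is immediate. The main step requiring care is the identification of $K$ as a submodule of $M(C)$ via the correct shift conventions in local cohomology; everything else is routine bookkeeping with Hilbert functions.
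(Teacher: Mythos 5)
Your proof is correct and follows essentially the same route as the paper: both identify the degreewise defect $\Delta h_C(t)-h_\Gamma(t)$ with the kernel of multiplication by $L$ on the deficiency module $M(C)$ (you via the local cohomology sequence for $0\to (R/I)(-1)\to R/I\to R/(I+L)\to 0$, the paper via the equivalent sheaf-cohomology sequence), and then telescope against the Hilbert polynomial $d\ell+1-g$. Your descending-induction argument for the equivalence $k=0\iff M(C)=0\iff C$ ACM is a valid filling-in of what the paper simply cites as standard.
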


\begin{proof}
Part of this result can also be found in \cite[Lemma 4.1]{hartshorne2015}. 
The Hartshorne-Rao module of $C$ is $M(C) = \bigoplus_{t \in \mathbb Z} H^1(\PP^n, \mathcal I_C(t) )$.
For a general linear form $L$  we have an induced multiplication $\times L\colon M(C) (-1) \rightarrow M(C)$. Let $K = \bigoplus_t K_t$ be the kernel of $\times L$.
For any integer $t$  we then have an induced homomorphism $\tau_t \colon [M(C)]_{t-1} \rightarrow [M(C)]_t$ from the degree $t-1$ component to the degree $t$ component. We set $k_t$ to be the dimension of the kernel $K_t$ of this homomorphism and we have $k = \sum_{t \geq 0} k_t$.

    More precisely, for any $t$ we have the exact sequence
    \[
    0 \rightarrow [I_C]_{t-1} \rightarrow [I_C]_t \stackrel{\sigma_t}{\longrightarrow} [I_{\Gamma|H}]_t \rightarrow H^1(\mathcal I_C(t-1)) \stackrel{\tau_t}{\longrightarrow} H^1(\mathcal I_C(t)) \rightarrow \cdots
    \]
    where $I_{\Gamma|H}$ is the homogeneous ideal of $\Gamma$ viewed as a subscheme of $H = \PP^{n-1}$. Note that if $C$ is not ACM then for at least one $t$, the restriction map $\sigma_t$ is not surjective.  Thus if $C$ is not ACM then $k_t > 0$ for at least one $t$ and we have
    \[
    \dim [I_C]_t - \dim [I_C]_{t-1} = \dim [I_{\Gamma|H}]_t - k_t \leq \dim [I_{\Gamma|H}]_t
    \]
    where the inequality is strict in at least one degree. Passing to the Hilbert functions of $C$ and of $\Gamma$, it follows that 
    \[
    \Delta h_C(t) = h_\Gamma (t) + k_t.
    \]
 But then Riemann-Roch (see \cite[page 29]{MiglioreBook}) gives for $\ell \gg 0$
    \[
    g = d \ell +1 - \sum_{i=0}^\ell \Delta h_C (i) = d \ell  - \sum_{i=1}^\ell h_\Gamma (i) - \sum_{i=0}^\ell k_i = \sum_{i=1}^\ell [d - h_\Gamma(i)]  -k
    \]
    as desired. 
    Finally, it is a standard fact (see for instance \cite{MiglioreBook}) that $C$ is ACM if and only if $M(C) =0$ if and only if $k=0$.
\end{proof}

\begin{example}
    If $C \subset \PP^4$ is ACM with $h$-vector $(1,3,4,4,2)$ then its general hyperplane section $\Gamma$ has Hilbert function $(1,4,8,12,14,14,\dots)$ so $d=14$ and the arithmetic genus is
    \[
    (14 - 4) + (14 - 8) + (14 - 12) = 18.
    \]
    If $C$ is not ACM but its general hyperplane section has the same $h$-vector then the genus of $C$ is strictly less than 18.
\end{example}

\begin{remark} \label{formal calc}
    Given an $h$-vector $\underline{h} = (1,h_1,h_2,\dots,h_s)$, Proposition \ref{genus formula} gives rise to a purely numerical formula that we can use without regard to any specific curve associated to the $h$-vector. First of all, we will ignore the invariant $k$ in Proposition \ref{genus formula}. Then we compute the ``integral"
\[
(1, 1+h_1, 1+h_1+h_2, \dots, 1+h_1+\cdots + h_s, \dots).
\]
If we formally call this sequence $h_\Gamma$ and set $d = 1+h_1+\cdots+h_s$, we formally obtain an invariant 
\[
g(\underline{h}) = \sum_{i=1}^\ell [d-h_\Gamma(i)].
\]
This formal computation will be used in the next section, where we will associate to a degree $d$ a specific ``extremal" $h$-vector, and apply this calculation to that $h$-vector and obtain an invariant $g(d)$ that will be important for our results.
\end{remark}

\begin{example}

One thing that we would like to stress is the difference between the role of the $h$-vector of $C$ and the $h$-vector of the general hyperplane section, $\Gamma$, of $C$ in the above discussion. Note that these $h$-vectors agree if and only if the curve is ACM.

A surprising fact is that examples exist of non-ACM curves in $\PP^3$ whose Hilbert functions agree with Hilbert functions of  ACM curves, so some confusion could arise. See \cite{MN11} for an extensive study of this behavior, but in fact the first example was due to Joe Harris much earlier (the paper \cite[Section 2]{MN11} included Harris's example with his permission). Harris produces two curves. The first, $C_1$, is a curve of degree 10 produced as the residual, in a complete intersection of two quartics, of a curve of type $(2,4)$ on a quadric surface. This curve is not ACM. The second, $C_2$, is defined by the ideal of maximal minors of a general $4 \times 5$ matrix of linear forms, which is ACM. (His construction was actually slightly more complicated, but this suffices for our purposes here.)

Since $C_1$ is in the liaison class of two skew lines, its Hartshorne-Rao module is 1-dimensional (as a vector space) so $k=1$ in Proposition \ref{genus formula}. The Hilbert functions of $C_1$ and $C_2$ are the same: $(1,4,10, 20, 30, \dots)$. Since they have the same Hilbert polynomial, $10t-10$, they both have arithmetic genus 11. 

In both cases the first difference of the Hilbert function is $(1,3,6,10, 10, 10, \dots)$.
However, one can check that the general hyperplane section $\Gamma_1$ of $C_1$ has Hilbert function $(1,3,6,9,10,10,\dots)$, while the general hyperplane section $\Gamma_2$ of $C_2$ has Hilbert function $(1,3,6,10,10,10,\dots)$. Then we confirm that $C_1$ has genus
\[
(10-3) + (10-6) + (10-9)    - 1 = 12-1 = 11
\]
and $C_2$ has genus
\[
(10-3) + (10-6)   = 11.
\]
If $C_1$ had been ACM with the given Hilbert function for the hyperplane section, its genus would have been 12 because the Hartshorne-Rao module would have been zero, so $k$ would also be zero in Proposition \ref{genus formula}.
\end{example}

\begin{example}
    Let $C_1,C_2$ be ACM curves in $\PP^4$ of degree 18 with $h$-vectors $(1,3,3,3,3,3,2)$ and $(1,3,4,4,4,2)$ respectively. Then our formulas give the genus of $C_1$ to be 40 and of $C_2$ to be 32. This reflects the general philosophy (not a hard and fast rule) that to maximize the genus, for fixed degree, we need to keep the $h$-vector as low as possible (within other constraints such as linear general position (LGP)). 
\end{example}

\begin{corollary} \label{hvtr}
    Let $C_1$ and $C_2$ be nondegenerate ACM curves in $\PP^n$ of the same degree and with arithmetic genera $g_1, g_2$. Let $\Gamma_1, \Gamma_2$ be the general hyperplane sections of $C_1$ and $C_2$ respectively. If $h_{\Gamma_1}(t) \leq h_{\Gamma_2}(t)$ for all $t$ then $g_1 \geq g_2$. If, in addition, $h_{\Gamma_1}(t) < h_{\Gamma_2}(t)$ for some $t$ then $g_1 > g_2$.
\end{corollary}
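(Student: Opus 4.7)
The plan is to apply Proposition \ref{genus formula} directly and compare the genus formulas term-by-term. Since $C_1$ and $C_2$ are both ACM, the correction term $k$ in Proposition \ref{genus formula} vanishes for each, so
\[
g_i \;=\; \sum_{j=1}^{\ell}\bigl[d - h_{\Gamma_i}(j)\bigr]
\]
for $\ell \gg 0$, where $d$ is the common degree of $C_1$ and $C_2$. This is the only ingredient needed; the rest is bookkeeping.

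From the hypothesis $h_{\Gamma_1}(t) \leq h_{\Gamma_2}(t)$ for all $t$, subtraction from the constant $d$ reverses the inequality: $d - h_{\Gamma_1}(t) \geq d - h_{\Gamma_2}(t)$ for every $t$. Summing these inequalities from $j=1$ up to any sufficiently large $\ell$ (taken large enough for both curves so that $h_{\Gamma_i}(j) = d$ for $j \geq \ell$, and hence the tails contribute $0$) yields $g_1 \geq g_2$.

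For the second assertion, if $h_{\Gamma_1}(t_0) < h_{\Gamma_2}(t_0)$ for some $t_0$, then the summand $d - h_{\Gamma_1}(t_0) > d - h_{\Gamma_2}(t_0)$, while every other summand still satisfies the weak inequality. Hence at least one strict inequality is preserved under the sum and $g_1 > g_2$.

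There is essentially no obstacle: the whole content sits in Proposition \ref{genus formula}, and the ACM hypothesis is exactly what makes the $k$-correction vanish so that the Hilbert-function comparison transfers cleanly to a genus comparison. One minor subtlety worth noting is that the two curves may require different thresholds for $\ell$, but taking $\ell$ to be the larger of the two (or any common upper bound) harmlessly pads each sum with zero terms and does not affect the conclusion.
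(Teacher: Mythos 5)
Your proof is correct and takes the same route as the paper, which simply declares the corollary immediate from Proposition \ref{genus formula}; your term-by-term comparison with $k=0$ in the ACM case is exactly the intended argument, and the remark about choosing a common $\ell$ is a harmless but sensible precaution.
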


\begin{proof}
It is immediate from Proposition \ref{genus formula}.
\end{proof}

\begin{example}
Consider the following example when $d=16$:
\[
\begin{array}{lll}
\underline{h} = (1,3,5,4,3) & \Rightarrow & g(\underline{h}) = 22; \\
\underline{h} = (1,3,4,4,4) & \Rightarrow & g(\underline{h}) = 24; \\
\underline{h} = (1,3,4,4,3,1) & \Rightarrow & g(\underline{h}) = 25.
\end{array}
\]

\end{example}

Now we introduce the {\it genus bounds} that we will use.
Let $C_1$ and $C_2$ be reduced, irreducible, nondegenerate curves in $\PP^4$. Let $C = C_1 \cup C_2$. Note that $C$ is not necessarily ACM even if $C_1$ and $C_2$ are, and $C$ could be ACM even if $C_1$ and/or $C_2$ are not. Let $H$ be a general hyperplane and let $\Gamma = C \cap H$. Of course $\Gamma$ {\it is} necessarily ACM, so its $h$-vector is the Hilbert function of the artinian reduction of the coordinate ring of $\Gamma$.

\begin{remark} \label{genus bound}  Given $d_1$ and $d_2$ with $d = d_1 + d_2$, we note that if $g$ is the arithmetic genus of $C = C_1 \cup C_2$ then from Proposition \ref{genus formula} and Proposition \ref{rosa fact} (taking $g_1 = g_2 = 0$) we have
\[
|C_1 \cap C_2| \leq g +1 \leq \sum_{i=1}^\ell [d - h_\Gamma(i)] +1
\]
where the second inequality  is necessarily strict if $C$ is not ACM, and the first is necessarily strict if $C_1$ and $C_2$ do not both have arithmetic genus zero. This gives the largest theoretically possible genus for $C_1 \cup C_2$, and thus the largest theoretically possible number of intersection points. 
\end{remark}

\subsection{Basic results on $h$-vectors}
\label{sec: classical}

This section and the next we recall and develop some geometric tools that we will use in the rest of the paper.

\ Recall that the Uniform Position Lemma of Harris (\cite[page 197]{harris}) says the following. {\it Let $C$ be a reduced, irreducible, nondegenerate curve in $\PP^n$. Let $\Gamma$ be a general hyperplane section of $C$. If $\Gamma$ imposes $t$ conditions on the linear system $|\mathcal O_{\PP^{n-1}}(l)|$ then any $t$ points of $\Gamma$ do.} A set of points with this property is said to have the {\it Uniform Position Property (UPP)}.

Now we turn our attention to the general hyperplane section of the union of two irreducible curves in $\PP^4$. Although this set of points may not have the UPP, we will now show that they still are in linear general position (LGP). This fact will be important in the sequel.

\begin{lemma} \label{avoid double tangent}
Let $C_1, C_2$ be nondegenerate, reduced, irreducible curves in $\PP^4$. Then $C_1$ and $C_2$ have at most finitely many tangent lines in common. In particular, the tangent line to $C_1$ at a general point is not tangent to $C_2$.
\end{lemma}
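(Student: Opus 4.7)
The plan is to reformulate the problem inside the Grassmannian $G=\mathbb{G}(1,4)$ of lines in $\PP^4$, which has dimension $6$. For each $i$, let $\gamma_i\colon C_i\dashrightarrow G$ be the Gauss map, defined on the smooth locus by $P\mapsto T_PC_i$; it extends to a morphism from the normalization $\widetilde{C_i}\to G$. Because $C_i$ is nondegenerate in $\PP^4$, it is in particular not a line, so $\gamma_i$ is non-constant and the image $T_i\subset G$ is an irreducible curve. A common tangent line of $C_1$ and $C_2$ corresponds to a point of $T_1\cap T_2$, so it is enough to show that $T_1\cap T_2$ is finite, which is automatic unless $T_1=T_2$ as curves in $G$.

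Next I would check that each Gauss map has finite fibres: if a single line were tangent to $C_i$ at infinitely many smooth points, the tangent direction along $C_i$ would be constant on a dense open subset, forcing $C_i$ to be a line and contradicting nondegeneracy. With finite fibres in hand, the ``in particular'' assertion follows at once from finiteness of $T_1\cap T_2$, since the set of $P\in C_1$ for which $T_PC_1$ is also tangent to $C_2$ is exactly the preimage under $\gamma_1$ of $T_1\cap T_2$.

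The main obstacle is therefore to rule out $T_1=T_2$ (the hypothesis that $C_1\neq C_2$ being implicit). If this equality held, the tangent developables $\tau(C_i)=\bigcup_{\ell\in T_i}\ell\subset\PP^4$ would coincide to a single irreducible surface $\Sigma$ containing both $C_1$ and $C_2$. The classical fact I would invoke is that a nondegenerate curve $C\subset\PP^n$ with $n\geq 3$ can be recovered from its family of tangent lines: one argument is via biduality applied to the dual hypersurface $C^{*}\subset(\PP^{4})^{*}$, which is determined by the tangential data and determines $C$ by $(C^{*})^{*}=C$; another is that $C$ appears as the cuspidal edge of $\tau(C)$, generically the singular locus of $\Sigma$. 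Either route gives $C_1=C_2$, a contradiction. The delicate point is the recovery step when the $C_i$ are singular or have many flexes; this is handled by passing to the normalization $\widetilde{C_i}$, where the developable is a genuine ruled surface, and applying biduality there before pushing the conclusion back down to $\PP^4$.
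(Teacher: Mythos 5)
Your argument is correct, but it runs along a genuinely different track from the paper's. The paper reduces to the plane: it projects $C_1,C_2$ from a general line $\lambda\subset\PP^4$ to plane curves $Y_1,Y_2$, observes that a common tangent line of the $C_i$ maps to a common tangent line of the $Y_i$, and then invokes biduality for plane curves (a plane curve is recovered from its dual curve) to conclude that two distinct irreducible plane curves share only finitely many tangents. You instead stay in $\PP^4$: you compare the Gauss images $T_1,T_2\subset\mathbb{G}(1,4)$, note that two distinct irreducible curves in the Grassmannian meet in finitely many points, and rule out $T_1=T_2$ by reflexivity of the dual variety, $(C^{*})^{*}=C$ --- which holds for any irreducible variety in characteristic zero regardless of singularities, so the caution about singular points and flexes at the end is needed only for your alternative cuspidal-edge route, not for the biduality one. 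The underlying principle is the same in both proofs (in characteristic $0$ a non-linear irreducible curve is determined by its family of tangent lines), but your implementation cleanly separates the two cases $T_1\neq T_2$ and $T_1=T_2$ and avoids the small checks that the paper's projection step silently requires (that $Y_1\neq Y_2$ for general $\lambda$, that tangent lines off $\lambda$ map to tangent lines, and that the projection is finite-to-one on the set of common tangents), at the cost of invoking the reflexivity theorem in $\PP^4$ rather than only for plane curves. You also make explicit the finite-fibre property of the Gauss map needed to deduce the ``in particular'' clause from finiteness of the set of common tangents, a point the paper leaves implicit.
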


\begin{proof}

Recall that if $C$ is an irreducible curve in $\PP^2$, and $C'$ is the Zariski open subset of smooth points of $C$ then we get a dual curve $D$ in $(\PP^2)^\vee$ that is not closed but is 1-dimensional, and the points of $D$ correspond to the tangent lines of $C'$.

 Let $D'$ be the smooth points of $D$. The dual of $D'$ is Zariski open in $C'$. That is, by taking the Zariski closure, {\it we recover $C$ from the dual of $C'$}. So a different curve from $C$ can't have the same dual curve.
 Therefore two irreducible plane curves have at most finitely many tangent lines in common.

  Now let $C_1$ and $C_2$ be nondegenerate, reduced, irreducible curves in $\PP^4$ and let $\lambda$ be a general line in $\PP^4$. Then the projection from $\lambda$, $\pi_\lambda$,  to a general plane $\PP^2$ gives two plane curves, say $Y_1 = \pi_\lambda (C_1)$ and $Y_2 = \pi_\lambda (C_2)$. The image of a tangent line to $C_1$ at a general point of $C_1$ maps under the projection to a tangent line of $Y_1$. Similarly for $C_2$ and $Y_2$.

This means that $C_1$ and $C_2$ have at most a finite number of common tangent lines.
\end{proof}

\begin{lemma} \label{lgp rmk}
    Let $C$ be the union  of two nondegenerate curves $C_1$ and $C_2$ in $\PP^4$, both  of which are reduced and irreducible, of degrees $d_1, d_2$ respectively. We do not assume that the curves have the same degree or that they are disjoint. Let $\Gamma = \Gamma_1 \cup \Gamma_2$, where $\Gamma_i = C_i \cap H$ and $H$ is a general hyperplane. 

    \begin{itemize}
    \item[\rm (i)] If $G_i$ is the monodromy group of $\Gamma_i$ as $H$ varies, $i = 1,2$, then the monodromy group of $\Gamma$ is $G_1 \times G_2$.  More precisely, $G_1 \times G_2 = S_{d_1} \times S_{d_2}$, the direct product of the symmetric groups. 
        \item[\rm (ii)]  The set $\Gamma$ is in LGP. 
        \item[\rm (iii)]  Suppose that $\Gamma$ lies on a unique curve $Y$ of a certain degree and suppose that $Y = Y_1 \cup Y_2$. Then both $Y_1$ and $Y_2$ are irreducible and (up to interchanging $C_1$ and $C_2$) $\Gamma_i \subset Y_i$ for $i = 1,2$.
        \end{itemize}
    \end{lemma}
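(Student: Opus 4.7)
For part (i), the plan is to combine Harris's classical monodromy theorem for hyperplane sections of irreducible curves with \Cref{avoid double tangent}. Applying Harris to each irreducible, nondegenerate $C_i$ gives $G_i = S_{d_i}$. Since the monodromy preserves irreducible components of $C$, the global monodromy $G$ of $\Gamma$ sits inside $S_{d_1} \times S_{d_2}$ and surjects onto each factor. To upgrade the containment to equality I would use \Cref{avoid double tangent}: a general tangent hyperplane to $C_1$ is not tangent to $C_2$, so a small loop in $(\PP^4)^\vee$ around a smooth point of $C_1^\vee \setminus C_2^\vee$ gives, by Picard--Lefschetz, a monodromy element $(\tau, e) \in G$ with $\tau$ a transposition on $\Gamma_1$. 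Conjugating this element by preimages of all of $S_{d_1}$ (which exist by surjectivity onto the first factor) sweeps out every transposition in $S_{d_1} \times \{e\}$, hence that whole subgroup. The symmetric argument yields $\{e\} \times S_{d_2} \subset G$, and together these generate $S_{d_1} \times S_{d_2}$, so $G = S_{d_1} \times S_{d_2}$.

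For part (ii), I would argue by contradiction. Suppose four points of $\Gamma$ are coplanar in $H = \PP^3$, with $k$ from $\Gamma_1$ and $4-k$ from $\Gamma_2$. By part (i) the full monodromy $S_{d_1} \times S_{d_2}$ acts on $\Gamma$, so the coplanarity persists for every choice of $k$ points from $\Gamma_1$ and $4-k$ from $\Gamma_2$. The cases $k = 0, 4$ immediately contradict the LGP of the individual $\Gamma_i$, which follows from the Uniform Position Lemma applied to each irreducible $C_i$ (recall $d_i \geq 4$ since $C_i$ is nondegenerate in $\PP^4$). For $k = 1$, fix $P \in \Gamma_1$ and project $\Gamma_2$ from $P$ into $\PP^2$; the hypothesis becomes ``any three image points are collinear'', forcing all of $\Gamma_2$ into a single plane through $P$, contradicting LGP of $\Gamma_2$. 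The case $k = 3$ is symmetric. For $k = 2$, fix $P_1, P_2 \in \Gamma_1$ and set $L = \overline{P_1 P_2}$: every secant of $\Gamma_2$ would have to meet $L$, and picking $Q_0 \in \Gamma_2 \setminus L$ (which exists, or else $\Gamma_2$ is collinear and LGP already fails) shows that every other $Q \in \Gamma_2$ lies in $\operatorname{span}(L, Q_0)$, again contradicting LGP of $\Gamma_2$.

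For part (iii), the plan is to interpret uniqueness of $Y$ as monodromy invariance. As $H$ varies, $Y = Y_H$ deforms, and so does its decomposition $Y = Y_1 \cup Y_2$, so the unordered partition $\{Y_1 \cap \Gamma,\, Y_2 \cap \Gamma\}$ of $\Gamma$ is preserved (blocks possibly swapped) by $G = S_{d_1} \times S_{d_2}$. The only two-block $G$-invariant partition of $\Gamma$ into nonempty sets is $\{\Gamma_1, \Gamma_2\}$ (up to relabeling), because any nontrivial $G$-orbit on proper subsets of $\Gamma$ has size $\binom{d_1}{a}\binom{d_2}{b} > 1$ and so cannot form a single block, while the only $G$-invariant proper subsets of $\Gamma$ are $\Gamma_1$ and $\Gamma_2$. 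After relabeling we obtain $\Gamma_i \subset Y_i$. For the irreducibility of each $Y_i$, a parallel monodromy argument applied to the irreducible decomposition of $Y_i$ induces a partition of $\Gamma_i$ that is $S_{d_i}$-invariant, hence trivial; combined with the uniqueness of $Y$ to exclude phantom components disjoint from $\Gamma$, this forces $Y_i$ to be irreducible.

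The main obstacle I anticipate is the last step of part (iii). The combinatorial monodromy argument pins down the coarse partition of $\Gamma$ cleanly, but ruling out both a ``singleton'' block structure on $\Gamma_i$ (which would produce $d_i$ components of $Y_i$ permuted transitively) and the possibility of extra irreducible components of $Y_i$ disjoint from $\Gamma$ requires combining the monodromy with a deformation argument for how $Y_H$ varies with $H$, together with the precise sense in which ``$Y$ is the unique curve of its degree containing $\Gamma$'' is invoked. Making this interplay rigorous is the subtle point; the other two parts are essentially formal consequences of part (i) plus Harris's theorem.
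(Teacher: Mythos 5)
Your proposal is correct and follows essentially the same route as the paper: part (i) is the classical monodromy argument hinging on Lemma \ref{avoid double tangent} (loops permuting $\Gamma_1$ while fixing $\Gamma_2$ and vice versa), and part (ii) uses (i) to reduce LGP to ruling out a single coplanar configuration of $a_1+a_2=4$ points, which you do by explicit projections where the paper equivalently bounds the dimension of the bad locus inside an irreducible incidence variety. For part (iii) the paper's own proof is a one-sentence appeal to UPP of each $\Gamma_i$ and the uniqueness of $Y_1,Y_2$, so the residual subtlety you honestly flag (singleton blocks and components of $Y_i$ disjoint from $\Gamma$) is no worse than what the paper itself leaves implicit.
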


\begin{proof}

Part (i)  is immediate from Exercise 11.4 of  \cite{EHbook}. Its proof is a small modification of the argument for the fact that the monodromy group  of the general hyperplane section of just one irreducible curve is the full symmetric group; see   \cite[Lemma, page 698]{HarrisDuke}, \cite[Uniform Position Lemma, page 197]{harris}, \cite[Remark 1.7]{Rathmann},  \cite[pages 111--113]{ACGH} and especially \cite[Chapter 11]{EHbook}. The key point for (i) is Lemma \ref{avoid double tangent}, so we can find  loops in $(\PP^4)^\vee$ that induce nontrivial permutations of $\Gamma_1$ but act trivially on $\Gamma_2$ (and vice versa).

Now we prove (ii), which again is a small modification of the above cited proofs in the one-curve situation. We have $C = C_1 \cup C_2 \subset \PP^4$, where $C_i$ are nondegenerate, irreducible and reduced. 
To show that $\Gamma = C \cap H$ ($H$ general) is in LGP, it is enough to show that  $\Gamma$  does not contain four points in a plane, i.e., any four points of $\Gamma$ impose four conditions on linear forms. It is certainly true (by UPP) that no four points of $\Gamma$ coming from $C_1$ or from  $C_2$ are coplanar.
Let $a_1, a_2$ be integers with $a_1+a_2 = 4$, $a_1>0$, $a_2>0$. We will show that any $a_1$ points of $\Gamma_1$ together with any $a_2$ points of $\Gamma_2$ span the hyperplane $H$. We will give the case $a_1 = a_2 = 2$, but the other cases are analogous.

Consider the incidence locus
\[
I \subset ( \PP^4)^\vee \times C_1^2 \times C_2^2
\]
be defined by
\[
I = \{ (H, P_1 + Q_1, P_2 + Q_2 ) \ | \ P_1, P_2, Q_1, Q_2 \in H \}.
\]
Let $ J \subset I$ be defined by
\[
 J = \{ (H, P_1 + Q_1, P_2+Q_2) \in I \ | \ P_1, Q_1, P_2, Q_2 \hbox{ span a plane} \}.
\]
(If $P_i = Q_i$, $H$ is tangent to $C_i$ at that point.)

It follows from (i) that $I$ is irreducible. Note that    ${J} \subset I$ is closed, and that $J \neq I$ since you can find two points of $C_1$ and two points of $C_2$ such that the union does not span a plane. We have  $\dim I = 4$ and $\dim  J \leq 3$.

Let $\pi_1 \colon I \rightarrow (\PP^4)^\vee$ be the projection to the first factor. Then $\pi_1(J)$ omits an open set. Hence a general hyperplane section of $C$ is in LGP. This proves (ii).

Part (iii) follows from the fact that both $\Gamma_1$ by itself and $\Gamma_2$ by itself have the UPP, and from the uniqueness of $Y_1$ and $Y_2$.
\end{proof}

The following set of results will be very  useful for sets of points in LGP.

\begin{theorem}[Maroscia, {\cite[Theorem 2.3]{maroscia}}]\label{Maroscia}
Let $P_1,\dots,P_s$ be distinct points in $\PP^n$ ($s > n \geq 2$) with $h$-vector $(1,h_1,\dots,h_m)$, where $h_m > 0, h_{m+1} = 0$.  Suppose that 
\begin{itemize}
    \item[\em (a)] $h_{m-1} = n-h$ with $h \geq 1$, and
    \item[\em (b)] no $n-h+1$ of the $s$ points lie in a $\PP^{n-h-1}$ of $\PP^n$.
\end{itemize}
Then at least $(n-h)(m-1)+2$ of the $s$ points lie in a $\PP^{n-h}$ of $\PP^n$. In particular, $Z$ is not in LGP.

\end{theorem}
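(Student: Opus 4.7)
The plan is to treat Maroscia's theorem as a refinement of the classical Castelnuovo-type bounds on Hilbert functions of point sets, where an unexpected drop in the $h$-vector (at degree $m-1$, from the maximum value $n$ down to $n-h$) must be explained by a geometric subconfiguration in a smaller linear subspace. The hypothesis (b) plays the role of a non-degeneracy condition, ensuring the subconfiguration does not collapse into a subspace of dimension smaller than $n-h$.

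First, I would reinterpret condition (a) via the artinian reduction of $R/I_Z$ by a regular sequence of $n$ general linear forms, so that the $h$-vector becomes the Hilbert function of an artinian graded algebra $A$ with $\dim A_{m-1} = n-h$ and $\dim A_m = h_m > 0$. The drop from $n$ to $n-h$ in degree $m-1$ produces, via a standard duality/socle argument, a distinguished subspace of forms of degree $m-1$ in $I_Z$ whose common zero locus contains a $(n-h)$-dimensional linear subspace $\Lambda \subseteq \PP^n$. The guiding heuristic is that if many points of $Z$ lie off $\Lambda$, the Hilbert function would have to grow faster than $n-h$ at degree $m-1$.

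Second, I would bound the number of points in $\Lambda$ from below as follows. Let $Z' = Z \cap \Lambda$ and $Z'' = Z \setminus Z'$. Writing the Hilbert function of $Z$ as the sum of contributions from $Z'$ and $Z''$ (with correction by residuation), the maximal $h$-vector attainable by $Z'$ inside $\PP^{n-h}$ subject to hypothesis (b) is precisely $(1, n-h, n-h, \dots, n-h, \ast)$ with $m-1$ entries equal to $n-h$, yielding $|Z'|\geq (n-h)(m-1)+2$. Hypothesis (b) is what forces the $h$-vector of $Z'$ to begin with $1, n-h$ rather than with a smaller value, and the length $m-1$ reflects the regularity constraint coming from $h_{m-1}>0$. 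The ``$+2$'' is the usual $h_0=1$ contribution plus the final nonzero entry.

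The main obstacle I expect is the rigorous identification of the linear subspace $\Lambda$: one must prove, rather than merely assert, that the degree-$(m-1)$ piece of $I_Z$ cuts out a unique $(n-h)$-plane (up to the residual points of $Z''$), and that this plane contains the required number of $P_i$'s. This is essentially a Bezout-plus-liaison argument: a pencil of degree $m-1$ hypersurfaces through $Z$ with too small a common zero locus would force too many conditions on $Z$ in degree $m-1$, contradicting $h_{m-1}=n-h$; on the other hand, hypothesis (b) prevents the common zero locus from being too small a linear space. Balancing these two forces produces exactly a $\PP^{n-h}$ with the stated number of points, and the failure of LGP is then automatic since $(n-h)(m-1)+2 \geq n-h+2$ points lying in a $\PP^{n-h}$ is already a failure of linear general position as soon as $m\geq 2$.
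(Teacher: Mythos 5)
The paper offers no proof of this statement to compare against: it is imported verbatim from Maroscia's paper as a known external result. Judged on its own terms, your sketch has a genuine gap at its central step, namely the production of the linear space $\Lambda$. You assert that the drop $h_{m-1}=n-h$ yields, ``via a standard duality/socle argument,'' a subspace of $[I_Z]_{m-1}$ whose common zero locus contains an $(n-h)$-plane. Read as a statement about all of $[I_Z]_{m-1}$ this is false: for $n=3$ take $Z$ to be six general points on a smooth conic $C$ lying in the plane $L=0$ together with one general point $P$. The $h$-vector is $(1,3,2,1)$, so $m=3$, $h=1$, hypothesis (b) holds (no three points collinear), and the conclusion of the theorem holds (six points in a $\PP^{2}$); but $[I_Z]_2$ coincides with the degree-two part of $I_{C\cup\{P\}}$ and its base locus is exactly $C\cup\{P\}$, which contains no plane. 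If instead you mean some proper subspace of $[I_Z]_{m-1}$, then exhibiting that subspace is essentially equivalent to exhibiting $\Lambda$, i.e.\ to the conclusion itself, and no socle or duality argument is indicated that would produce it; the closing ``Bezout-plus-liaison'' paragraph does not repair this, since it presupposes that the relevant base locus is a linear space. The actual mechanism in Maroscia (and in the related maximal-growth results of Bigatti--Geramita--Migliore invoked elsewhere in this paper) is different: the $(n-h)$-plane arises as the \emph{span} of a distinguished subset of the points, extracted by an inductive decomposition of the Hilbert function, not as a component of the base locus of $[I_Z]_{m-1}$.

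There is also a secondary slip in your counting step. You call $(1,n-h,\dots,n-h,\ast)$ the \emph{maximal} $h$-vector of $Z'=Z\cap\Lambda$ and then deduce the lower bound $|Z'|\ge(n-h)(m-1)+2$; what you actually need is a termwise \emph{lower} bound, which is Castelnuovo's estimate $h_{Z'}(t)\ge\min\{|Z'|,(n-h)t+1\}$ and does follow from hypothesis (b) applied inside $\Lambda$. Even granting that, the bound on $|Z'|$ only follows once you know the $h$-vector of $Z'$ in $\Lambda$ is still nonzero in degree $m$, which your residuation step asserts but does not justify. Only the final observation --- that $(n-h)(m-1)+2\ge n-h+2$ points in a $\PP^{n-h}$ with $h\ge1$ already violates LGP --- is complete as written.
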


\begin{lemma}  [Castelnuovo's Lemma] \label{Cast} Let $Z$ be a set of $q$ points in LGP in $\PP^n$. Assume $q\geq 2n+3$ and assume that the Hilbert function of $Z$ satisfies $h_Z(2)\leq 2n+1$.  Then  in fact $h_Z(2) = 2n+1$ and $Z$ lies in a rational normal curve in $\PP^n$.
\end{lemma}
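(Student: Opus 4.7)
The plan is to establish the two conclusions separately. For the first part---that $h_Z(2) \geq 2n+1$---I would pick any $2n+1$ points $P_1,\dots,P_{2n+1}$ from $Z$ and show directly that they impose independent conditions on quadrics in $\PP^n$. For each $P_i$, the goal is to construct a reducible quadric vanishing on all the other $2n$ points but not at $P_i$: pair up the $2n$ points $\{P_j\}_{j\ne i}$ into $n$ pairs, and for each pair choose a hyperplane containing it (supplemented with enough additional points so that the hyperplane is uniquely determined), then take the product of two such linear forms at a time to get $\binom{n}{1}/2$ quadrics. The LGP hypothesis ensures $P_i$ lies on none of these hyperplanes. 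Combined with the hypothesis $h_Z(2) \leq 2n+1$, equality $h_Z(2)=2n+1$ follows, and therefore $\dim [I_Z]_2 = \binom{n+2}{2} - (2n+1) = \binom{n}{2}$.

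For the second part, I would analyze the base locus $B$ of the quadric system $V := [I_Z]_2$; clearly $Z \subseteq B$. The aim is to show $B$ is an irreducible rational normal curve. Macaulay's theorem applied to the $h$-vector gives $\Delta h_Z(m) \leq n$ for all $m \geq 2$, so the Hilbert polynomial of $Z$ has leading coefficient at most $n$, bounding the degree of any curve containing $Z$ by $n$. Since $q \geq 2n+3 > 2n+1 = h_Z(2)$ the Hilbert function is still strictly increasing past degree $2$, so the saturated ideal of $Z$ does not cut out a $0$-dimensional scheme and $B$ is positive-dimensional. Furthermore $B$ cannot be a surface (or higher), as the $\binom{n}{2}$ independent quadrics would then all be reducible in a controlled way contradicting LGP once $q$ is this large. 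So $B$ is a curve of degree $\leq n$; combined with nondegeneracy (any component of $B$ containing $n+1$ LGP-points of $Z$ spans $\PP^n$) and the classical fact that a nondegenerate irreducible curve in $\PP^n$ has degree $\geq n$, we deduce $\deg B = n$ and $B$ is a rational normal curve.

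The main obstacle is the step identifying $B$ with an \emph{irreducible} rational normal curve and ensuring it contains \emph{all} of $Z$ rather than splitting into pieces each carrying only part of $Z$. The cleanest approach I would carry out is induction on $n$: project from a general point $P_q\in Z$ to $\PP^{n-1}$; by LGP the images of $P_1,\dots,P_{q-1}$ are in LGP in $\PP^{n-1}$, and the $h$-vector bound translates (via the projection exact sequence) to $h_{\pi(Z\setminus\{P_q\})}(2) \leq 2(n-1)+1$ with $q-1 \geq 2(n-1)+3$, so the inductive hypothesis yields a rational normal curve $C'\subset \PP^{n-1}$ through the projected points. Lifting $C'$ back via the cone from $P_q$ and intersecting with a quadric from $V$ not vanishing on that cone produces the desired rational normal curve through $Z$. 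The base case $n=2$ is the classical statement that $q\geq 7$ points in $\PP^2$ imposing only $5$ conditions on conics lie on a (unique, smooth by LGP) conic, which follows directly from B\'ezout.
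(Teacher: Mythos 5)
Your proposal takes a genuinely different route from the paper: the paper disposes of this lemma in three lines, deducing $h_Z(2)=2n+1$ from Maroscia's theorem (Theorem \ref{Maroscia}) applied to the $h$-vector $(1,n,h_2,\dots)$ with $h_3>0$, and then citing Lemma (3.9) of the Montreal notes \cite{montreal} for the rational normal curve. You instead attempt a self-contained proof of the classical statement. That is a legitimate ambition, but as written there are several genuine gaps.

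First, your construction showing that $2n+1$ points in LGP impose independent conditions on quadrics does not work. Partitioning the $2n$ points $\{P_j\}_{j\neq i}$ into $n$ pairs and multiplying two of the resulting linear forms at a time produces quadrics each vanishing at only $4$ of the $2n$ points, so none of them separates $P_i$ from the rest. The standard (and correct) argument splits the $2n$ points into \emph{two groups of $n$}; by LGP each group spans a hyperplane, and LGP again guarantees $P_i$ lies on neither, so the product of the two linear forms is the required quadric. This is easily repaired, but the step as stated fails.

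Second, the identification of the base locus with a rational normal curve has two unproved assertions. The claim that the base locus cannot be a surface (``the quadrics would all be reducible in a controlled way contradicting LGP'') is not an argument. More seriously, the inductive projection step hides the real content of Castelnuovo's lemma: (i) the assertion that $h_{\pi(Z\setminus\{P_q\})}(2)\leq 2(n-1)+1$ does not follow from the naive count --- requiring a quadric through $Z$ to be a cone with vertex $P_q$ imposes $n$ further conditions, which only gives $h_{\pi(Z')}(2)\leq 2n$, one more than the inductive hypothesis needs; and (ii) the ``lifting'' at the end is unjustified: the cone over the rational normal curve $C'\subset\PP^{n-1}$ with vertex $P_q$ has degree $n-1$, so its intersection with a quadric from $[I_Z]_2$ is a curve of degree $2n-2$, not $n$, and extracting from it an \emph{irreducible} rational normal curve of degree $n$ containing \emph{all} of $Z$ requires an analysis of divisors on the cone (or a uniqueness/monodromy argument) that you do not supply. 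These are exactly the points where the classical proofs of Castelnuovo's lemma do their work, so the proposal cannot be accepted as a proof without filling them in; given the length of the required argument, citing the classical reference as the paper does is the more economical choice.
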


\begin{proof}
    If the $h$-vector of $Z$ is $(1,n, h_2,h_3, \dots)$, the assumption $h_Z(2) \leq 2n+1$ means $h_2 \leq n$ and the assumption $q \geq 2n+3$ implies $h_3 > 0$. If we had $h_2 < n$, Maroscia's theorem above implies that $Z$ is not in LGP. Hence we have $h_Z(2) = 2n+1$. The last part then follows from Lemma (3.9) of \cite{montreal}.
\end{proof}

For $Z \subset \PP^3$, Castelnuovo's Lemma becomes the  following.

\begin{corollary} \label{CL}

Let $Z \subset \mathbb P^3$ be a set of $\geq 9$ points in LGP, with $h$-vector $(1,3, a_2,\dots,a_m)$.
\begin{itemize}
 
\item[\em (a)]
 If $a_2 = 3$ then $Z$ lies on a twisted cubic curve. 

\item[\em (b)]
Without assuming $a_2=3$, if $a_i  = 3$  for some $3 \leq i \leq m-2$, we get that $a_j = 3$ for $i \leq j \leq m-1$,   $a_m \leq 3$, and at least $3(m-1) + a_m + 1$ of the points lie on a twisted cubic.
\end{itemize}

\end{corollary}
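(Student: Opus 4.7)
The plan is to treat parts (a) and (b) separately.

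For (a), I would apply Castelnuovo's Lemma (Lemma~\ref{Cast}) directly. With $n=3$ and $a_2=3$, we have $h_Z(2) = 1+3+3 = 7 = 2n+1$, and together with $|Z| \geq 9 = 2n+3$ the lemma yields that $Z$ lies on a rational normal curve in $\PP^3$, i.e., a twisted cubic.

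For (b), the first step is to pin down the shape of the $h$-vector. Since the $h$-vector is the Hilbert function of the artinian reduction of the coordinate ring of $Z$, Macaulay's growth theorem governs its progression. A direct computation of the $j$-th Macaulay expansion gives $3^{\langle j\rangle}=3$ for $j\geq 3$, $2^{\langle j\rangle}=2$ for $j\geq 2$, and $1^{\langle j\rangle}=1$ for $j\geq 1$. Hence from $a_i=3$ with $i\geq 3$ one deduces $a_{i+1}\leq 3$. Suppose for contradiction that $a_{i+1}\leq 2$; then the propagation forces $a_k\leq 2$ for all $k\geq i+1$, and in particular $a_{m-1}\in\{1,2\}$. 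Writing $a_{m-1}=n-h$ with $h\in\{1,2\}$, the LGP hypothesis on $Z$ in $\PP^3$ supplies the side condition of Maroscia's Theorem~\ref{Maroscia} (no $n-h+1$ points in a $\PP^{n-h-1}$), whose conclusion produces a subset of at least $(n-h)(m-1)+2\geq 4$ points in a proper linear subspace, contradicting LGP. Therefore $a_{i+1}=3$, and inductively $a_j=3$ for $i\leq j\leq m-1$, while $a_m\leq 3$ follows by one more application of Macaulay.

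The second step of (b) is to exhibit a subset $Z'\subset Z$ of cardinality $3(m-1)+a_m+1$ lying on a twisted cubic. The target $h$-vector for $Z'$ is $(1,3,3,\ldots,3,a_m)$ with $m-1$ threes, which satisfies $h_{Z'}(2)=7=2n+1$ and $|Z'|\geq 13\geq 2n+3$ (using $m\geq i+2\geq 5$). Once such a $Z'$ is realized as a subset of $Z$, Castelnuovo's Lemma~\ref{Cast} applied to $Z'$ concludes the argument. The subset itself is obtained by iteratively deleting points of $Z$ to shave the early entries $a_2,\ldots,a_{i-1}$ down to $3$: at each stage, LGP provides sufficient flexibility to choose a point whose removal decreases exactly one prescribed early $h$-vector entry without disturbing the plateau of $3$'s from position $i$ onwards or the final entry $a_m$. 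This construction is in the spirit of \cite[Lemma~3.9]{montreal} invoked in the proof of Lemma~\ref{Cast}.

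The principal obstacle is the subset-extraction argument in the second step. Removal of a single point can, in principle, affect several entries of the $h$-vector in uncontrolled ways, and it is the careful use of LGP to arrange that only the targeted entry drops which represents the bulk of the technical work. Once such a $Z'$ has been produced, the invocation of Castelnuovo is immediate.
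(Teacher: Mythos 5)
Your part (a) and the first half of part (b) --- establishing $a_j=3$ for $i\leq j\leq m-1$ and $a_m\leq 3$ via Macaulay's theorem together with Maroscia's Theorem~\ref{Maroscia} and LGP --- coincide with the paper's argument. The gap is in the second half of part (b), where you must produce $3(m-1)+a_m+1$ points on a twisted cubic. Your proposed subset-extraction is not carried out, and the claim on which it rests --- that one can delete a point of $Z$ so that a \emph{prescribed early} entry of the $h$-vector drops while the plateau of $3$'s and the final entry are untouched --- is unjustified and in general false. When a single point $P$ is removed from $Z$, the Hilbert function drops by one exactly in degrees greater than or equal to the separator degree of $P$, so the $h$-vector decreases in precisely that one position; which position this is cannot be prescribed at will. (For instance, for a set in uniform position every point has separator degree $m$, so any deletion lowers $a_m$ and never an early entry.) LGP gives no control over separator degrees, so the ``shaving'' step has no foundation; you have correctly flagged it as the bulk of the work but not supplied it.

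The paper closes this step by a different mechanism: since $i\geq 3$ and $a_i=a_{i+1}=3$, the growth of the $h$-vector from degree $i$ to degree $i+1$ is \emph{maximal} in the sense of Macaulay (because $3^{\langle i\rangle}=3$ for $i\geq 3$), and the theorem of Bigatti--Geramita--Migliore \cite{BGM} on the geometric consequences of maximal growth directly produces a reduced curve of degree $3$ containing all but a controlled number of the points; LGP forces that curve to be irreducible and nondegenerate, hence a twisted cubic, and the count $3(m-1)+a_m+1$ comes out of the same theorem. To repair your route you should replace the deletion argument by an appeal to \cite{BGM}, at which point the extraction of a subset with a prescribed $h$-vector becomes unnecessary.
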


\begin{proof}
Part (a) is just a translation of the standard Castelnuovo's Lemma given above to $\PP^3$. For (b),
    assume that $a_i = 3$ for $3 \leq i \leq m-2$. By Macaulay's Theorem (see \cite[Theorem 4.2.10 c)]{BrunsHerzog}), $a_{i+1} \leq 3$. By  Theorem \ref{Maroscia} and LGP, $a_{i+1} > 2$. Thus $a_{i+1} = 3$, which must then be maximal growth of the Hilbert function $h$. The result follows from LGP and \cite{BGM}.
\end{proof}

\subsection{Lifting properties from the hyperplane section}\label{sec: lifting}

In this section we will consider curves $C \subset \mathbb P^4$ and address the following sort of question inspired by Laudal's Lemma. 

\begin{question}
If the general hyperplane section of $C$ lies on a curve $Y$ in $\mathbb P^3$ of a certain kind, when can we conclude that $C$ lies on a surface of degree $ (\deg Y)$ whose hyperplane section is $Y$? 
\end{question}

In part, we will  be looking for the smallest degree for $C$ that guarantees this kind of conclusion. The results will be used later in the paper.
Of course this is not automatic. For example, if $C$ consists of six general lines in $\mathbb P^4$ then the general hyperplane section of $C$ lies on a twisted cubic in $\PP^3$, while $C$ itself does not even lie on one quadric hypersurface, much less on a surface of minimal degree (which lies on three quadric hypersurfaces). In this section we give results in this  direction.

To set notation, let $R = \CC[x_0,\dots,x_n]$, and let $L$ be a general linear form. Let $S = R/(L) \cong \CC[x_0,\dots,x_{n-1}]$. Let $C$ be an equidimensional curve in $\mathbb P^n$ with no embedded points (we will add assumptions as needed) with saturated ideal $I_C$. Let $H$ be the hyperplane defined by $L$ and let $\Gamma = C \cap H \subset H \cong \mathbb P^{n-1}$.    
Since $\Gamma$ may be viewed as a set of points in $H$ or in $\mathbb P^n$, to emphasize that we are in the former case we will use the notation $I_{\Gamma|H}$. Note that $I_{\Gamma|H} \cong \left ( \frac{I_C + (L)}{(L)} \right )^{\textrm{sat}}$. The saturation is not necessary when $C$ is ACM.

The linear form $L$ induces a homomorphism 
\[
\times L \colon \bigoplus_{t \in \mathbb Z} H^1(\mathcal I_C (t)) (-1) \longrightarrow \bigoplus_{t \in \mathbb Z} H^1(\mathcal I_C(t)).
\]
The kernel, $K$, and cokernel, $W$, of this homomorphism are graded $R$-modules. We also denote by $J$ the module $I_C / (L \cdot I_C)$, which is isomorphic to the ideal  $\frac{I_C + (L)}{(L)}$ in $S$, and whose saturation is $I_{\Gamma|H}$.
Notice that $K$ (resp. $W$) is nonzero if and only if $C$ is not ACM if and only if $J$ is not saturated.

Let $t \in \mathbb Z$. Consider the commutative diagram
{\footnotesize \tiny
\begin{equation}\label{longseq}
\begin{array}{cccclccclccccccc}
0  \rightarrow  [I_C]_{t-1}  \stackrel{\times L}{\longrightarrow}  [I_C]_t  & \longrightarrow &  [I_{\Gamma|H}]_t & \longrightarrow & H^1(\mathcal I_C(t-1)) & \stackrel{\times L}{\longrightarrow} & H^1(\mathcal I_C(t)) & \longrightarrow & H^1(\mathcal I_{\Gamma|H}(t)) & \rightarrow & \dots. \\
\hfill \searrow &&  \nearrow \hfill \searrow && \nearrow \hfill & & \hfill \searrow && \nearrow \hfill  \\
& [J]_t && [K]_t &&&& [W]_t \\
\hfill \nearrow && \searrow \hfill \nearrow & & \searrow  \hfill &&  \hfill \nearrow && \searrow \hfill  \\
\hspace{.8in} 0 &&  0  & &  \hspace{.2in} 0 \hfill & & \hspace{.35in} 0 \hfill & & \hspace{.2in}  0. \hfill
\end{array}
\end{equation}
}

\begin{remark} \label{HU conclusion}
For a finitely generated graded module $M$ we denote by $\alpha(M)$ the initial degree of $M$. Let $A = S/I_{\Gamma|H}$ and $\bar A$ its artinian reduction. Let $a$ be the least degree of a socle element of $\bar A$ and let $b = \alpha(K)$. Note that $b$ is the least degree in which $I_{\Gamma|H}$ contains an element that does not lift to $I_C$.
Then \cite[Theorem 3.16]{HU} implies the following:  
\[
\hbox{
\begin{tabular}{p{5.5in}}
{\em
If $C$ is not ACM then $a \leq b$. That is, the least degree in which $I_{\Gamma|H}$  contains an element that does not lift to $I_C$ is greater than or equal to the least degree of a socle element of the artinian reduction of $S/I_{\Gamma|H}$.  }
\end{tabular}
}
\]
\end{remark}

For the next result, recall that $\Gamma$ is {\em level} if the socle of $\bar A$ is concentrated in the last degree.

\begin{corollary} \label{socle r}
Assume that $\Gamma$ is level with socle in degree $r$. Then for $t < r$, all of  $[I_{\Gamma|H}]_t$ lifts to $[I_C]_t$.
\end{corollary}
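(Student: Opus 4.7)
The plan is to combine the Huneke--Ulrich lifting result recorded in Remark~\ref{HU conclusion} with the defining property of a level algebra. Recall from diagram (\ref{longseq}) that for each degree $t$, the component $[K]_t$ appears exactly as the cokernel of the restriction map $[I_C]_t \to [I_{\Gamma|H}]_t$. Therefore, the statement ``everything in $[I_{\Gamma|H}]_t$ lifts to $[I_C]_t$'' is equivalent to $[K]_t = 0$, and so the claim reduces to showing $[K]_t = 0$ for all $t < r$.

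If $C$ is ACM, then $K = 0$ identically and there is nothing to prove. So I may assume $C$ is not ACM, in which case $K \neq 0$ and the initial degree $b = \alpha(K)$ is a well-defined nonnegative integer. By Remark~\ref{HU conclusion} (i.e., the Huneke--Ulrich theorem \cite[Theorem 3.16]{HU}), we have $a \leq b$, where $a$ denotes the least degree in which the artinian reduction $\bar A$ of $S/I_{\Gamma|H}$ carries a socle element.

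Now I invoke the hypothesis that $\Gamma$ is level with socle concentrated in degree $r$. By definition of levelness, every socle element of $\bar A$ lives in degree $r$, so $a = r$. Combining this with $a \leq b$ gives $r \leq b = \alpha(K)$, which says precisely that $[K]_t = 0$ for every $t < r$. Translating back through the diagram yields that the restriction map $[I_C]_t \to [I_{\Gamma|H}]_t$ is surjective in every degree $t < r$, as desired.

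There is no real obstacle here: the entire argument is an almost immediate bookkeeping step once the Huneke--Ulrich inequality $a \leq b$ is in hand. The only subtlety is being careful about the two cases (ACM versus non-ACM), and remembering that ``level with socle in degree $r$'' forces $a = r$ rather than merely $a \leq r$.
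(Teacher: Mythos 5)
Your argument is correct and is exactly the one the paper intends: the corollary is stated immediately after Remark~\ref{HU conclusion} with no separate proof, because levelness gives $a=r$ and the Huneke--Ulrich inequality $a\leq b=\alpha(K)$ then forces $[K]_t=0$, i.e.\ surjectivity of $[I_C]_t\to[I_{\Gamma|H}]_t$, for all $t<r$. Your identification of $[K]_t$ with the cokernel of the restriction map matches diagram~(\ref{longseq}), and your handling of the ACM case is a harmless extra precaution.
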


\begin{remark}
In the case of curves in $\mathbb P^3$, Remark \ref{HU conclusion} is essentially the content of a theorem of Strano, which is phrased a bit differently in \cite[Theorem~1]{strano}. He says that for a curve $C$ in $\mathbb P^3$ with general hyperplane section $\Gamma = H \cap C$,  if $\Tor^R_1 (I_{\Gamma|H})_n  =0$ for every $0 \leq n \leq \sigma+2$ then every curve in $H$ of degree $\sigma$ containing $\Gamma$ can be lifted to a surface of degree $\sigma$ containing $C$. This means that if 
\[
\sigma < \min \{ \hbox{degree of socle elements of } \bar A \}
\]
then every curve of degree $\sigma$ lifts, which is essentially Remark \ref{HU conclusion}.
Strano points out that this result can be used to prove Laudal's Lemma, namely that if $C$ is irreducible of degree $d > \sigma^2+1$ and $\Gamma$ is contained in a curve of degree $\sigma$ then $C$ is contained in a surface of degree $\sigma$. It suffices to show that under the stated assumptions, the socle of $\bar A$ must begin in degree $\geq \sigma + 1$.

One would like to find an analogue of Laudal's Lemma for curves in higher dimensional projective space. For example, if $C \subset \mathbb P^4$ is reduced and irreducible and $\deg C > N$ (for suitable $N$), and if a general hyperplane section $\Gamma$ of $C$ lies on a surface of degree $\sigma$ then $C$ lies on a hypersurface of degree $\sigma$. Even more interesting: if $\Gamma$ lies on a curve of degree $\sigma$, when can we conclude that $C$ lies on a surface of degree $\sigma$? Although we have less complete knowledge of the possible Hilbert functions of sets of points in $\mathbb P^3$ with the uniform position property, in this section we will give some results in this direction.
\end{remark}

\begin{lemma} \label{lift to surf 2}
    Let $C \subset \PP^n$ be a curve (not necessarily irreducible). Let $H$ be a general hyperplane and let $\Gamma = C \cap H$. Assume that there is an equidimensional curve $Y$ in $H$  and an integer $t$ so that
    \begin{itemize}
        \item[(i)] $[I_{\Gamma|H}]_t = [I_{Y|H}]_t$ and the base locus of $[I_{Y|H}]_t$ is $Y$;
\item[(ii)] the socle of the artinian reduction of $S/I_{\Gamma|H}$ begins in degree $> t$.     
    \end{itemize}
\noindent  Then $C$ lies on an equidimensional surface $T$  whose hyperplane section is $Y$.  The surface $T$ is ACM if and only if $Y$ is ACM.
\end{lemma}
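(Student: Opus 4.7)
The plan is to use the Huneke--Ulrich lifting result (as packaged in \Cref{socle r}) to lift the whole vector space $[I_{Y|H}]_t = [I_{\Gamma|H}]_t$ to $[I_C]_t$, and then take $T$ to be the scheme in $\PP^n$ cut out by those lifted forms (passing to the top-dimensional part if necessary).

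More precisely, condition (ii) says that the artinian reduction of $S/I_{\Gamma|H}$ has no socle in degrees $\leq t$, so \Cref{socle r} (equivalently, \Cref{HU conclusion}) gives $[K]_t = 0$, i.e.\ the restriction map $[I_C]_t \twoheadrightarrow [I_{\Gamma|H}]_t$ in diagram~\eqref{longseq} is surjective. Choose a vector subspace $\tilde V \subset [I_C]_t$ mapping isomorphically onto $[I_{\Gamma|H}]_t = [I_{Y|H}]_t$ modulo $L$, and let $T_0 \subset \PP^n$ be the subscheme defined by the ideal $(\tilde V) \subset R$. Since $\tilde V \subset I_C$, we have $C \subset T_0$. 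Reducing mod $L$, the ideal $(\tilde V) + (L)/(L)$ agrees with $([I_{Y|H}]_t) \subset S$, so $T_0 \cap H$ is the scheme-theoretic base locus of $[I_{Y|H}]_t$, which by hypothesis (i) is exactly $Y$.

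Now I need to turn $T_0$ into an equidimensional surface. Because $H$ is general, no component of $T_0$ is contained in $H$, so every component of $T_0$ meets $H$ properly; since $T_0 \cap H = Y$ has pure dimension $1$, no component of $T_0$ has dimension $>2$, and any $2$-dimensional components of $T_0$ restrict to the equidimensional curve $Y$. Let $T$ be the union of the top-dimensional (equidimensional dim~$2$) components of $T_0$. Any irreducible component of $C$ is a curve in $T_0$ meeting general $H$ in at least one point, so it cannot itself be an isolated or embedded $1$-dimensional component of $T_0$ (its trace on $H$ would then appear as an embedded point of the pure-dimensional scheme $Y$, contradicting equidimensionality of $Y$). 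Hence each component of $C$ lies in a $2$-dimensional component of $T_0$, so $C \subset T$, and still $T \cap H = Y$ because the $2$-dimensional components of $T_0$ account for all of $Y$.

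For the ACM statement: since $Y$ is a general hyperplane section of the equidimensional surface $T$, if $T$ is ACM then so is $Y$ by standard restriction. Conversely, if $Y$ is ACM, then in the analogue of diagram~\eqref{longseq} for $T$ the restriction map $R/I_T \to S/I_Y$ coming from the sequence $0 \to R/I_T(-1) \xrightarrow{\times L} R/I_T \to S/I_Y$ combined with the vanishing of $H^1_*(\mathcal I_{Y|H})$ forces multiplication by $L$ to be surjective on the finite-length Hartshorne--Rao-type module $H^1_*(\mathcal I_T)$, hence zero by Nakayama, so $T$ is ACM. The main obstacle I expect is the bookkeeping in the middle paragraph: ensuring that after discarding lower-dimensional components of $T_0$ we still have both $C \subset T$ and $T \cap H = Y$; this is where equidimensionality of $Y$ and the generality of $H$ are crucial.
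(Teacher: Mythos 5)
Your proposal is correct and follows essentially the same route as the paper's proof: use Remark \ref{HU conclusion} (equivalently Corollary \ref{socle r}) together with hypothesis (ii) to lift all of $[I_{\Gamma|H}]_t=[I_{Y|H}]_t$ to $[I_C]_t$, take the top-dimensional part of the resulting scheme to get $T$, and finish with the standard fact that ACM-ness passes between an equidimensional surface and its general hyperplane section, for which the paper simply cites \cite[Proposition 2.1]{HU}; your extra bookkeeping about discarding lower-dimensional components is a more careful version of the paper's one-line remark. The only minor caveat is that your sketch of the converse ACM implication controls only $H^1_*(\mathcal I_T)$, whereas for a surface one must also deal with $H^2_*(\mathcal I_T)$ --- a point the cited reference covers.
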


\begin{proof}
    The assumption (ii) combined with the statement of Remark \ref{HU conclusion} gives us that the entire component $[I_{\Gamma|H}]_t$ lifts to $[I_C]_t$. But $[I_{\Gamma|H}]_t = [I_{Y|H}]_t$, where $Y$ is an equidimensional curve, and $H$ is a general hyperplane. This means that $[I_C]_t$ defines a surface of the same degree  as $Y$. Even if this surface is not equidimensional, one can take the top dimensional part to obtain $T$. The last statement of the lemma is standard since $Y$ is a general  hyperplane section of $T$ -- see for instance \cite[Proposition 2.1]{HU}.
\end{proof}

Next, we focus on the case in which $\Gamma$ lies in a curve of minimal degree, or in a complete intersection of two quadrics in $\mathbb P^3$. They indeed will prove to be the relevant cases for our analysis on the maximal intersection of $C_1$ and $C_2$.

Other cases will be irrelevant. For instance, $\Gamma$ could also lie on a rational quartic curve, so it would have $h$-vector $(1,3,5,4,4,\dots)$, and thus this case cannot give the best bound.

\begin{proposition} \label{rnc-nonacm}
Let $C \subset \mathbb P^n$ be a curve of degree $d\geq 2n-1$, which in addition we assume is ACM if $d=2n-1$. If a general hyperplane section $\Gamma$ of $C$ lies on a rational normal curve $Y$ in $H = \mathbb P^{n-1}$, then $C$ lies on a surface of minimal degree $n-1$ in $\mathbb P^n$. 
\end{proposition}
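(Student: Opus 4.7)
The plan is to apply \Cref{lift to surf 2} with $t=2$ and with $Y$ in the role of the hyperplane curve. Lifting the $\binom{n-1}{2}$ quadrics that cut out $Y$ in $H$ to elements of $[I_C]_2$ will produce a subscheme whose two-dimensional part is a surface of minimal degree containing $C$.

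First I will verify condition (i) of \Cref{lift to surf 2}. The rational normal curve $Y$ has Hilbert function $h_Y(t)=(n-1)t+1$, so $h_Y(2)=2n-1$. Since $\Gamma\subset Y$ consists of $d\geq 2n-1$ points, $h_\Gamma(2)=\min(d,h_Y(2))=2n-1$, which combined with the inclusion $[I_{Y|H}]_2\subseteq[I_{\Gamma|H}]_2$ forces equality of these $\binom{n-1}{2}$-dimensional spaces. Because $Y$ is cut out scheme-theoretically by the $2\times 2$ minors of a catalecticant matrix, the base locus of $[I_{Y|H}]_2$ is exactly $Y$.

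Next I will handle the lifting of these quadrics to $[I_C]_2$, splitting into two cases. If $d=2n-1$, the ACM hypothesis gives $H^1(\mathcal I_C(1))=0$, so the long exact sequence \eqref{longseq} directly yields a surjection $[I_C]_2\twoheadrightarrow [I_{\Gamma|H}]_2$, lifting every quadric. If $d\geq 2n$, I will verify condition (ii) of \Cref{lift to surf 2}: that the socle of the artinian reduction $\bar A$ of $S/I_{\Gamma|H}$ begins in degree $>2$. The $h$-vector of $\Gamma$ has the form $(1,n-1,n-1,\ldots,n-1,r')$ of length $s=\lceil (d-1)/(n-1)\rceil \geq 3$. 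Realizing $R_\Gamma$ as a quotient of $R_Y\cong k[s,t]^{(n-1)}$ by the regular element corresponding to the $d$ points of $\Gamma$ on $Y\cong\PP^1$ displays $\bar A$ as a level graded artinian algebra whose socle is concentrated in its top degree $s\geq 3$, so condition (ii) holds.

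Finally I will assemble the surface. The lifted subspace $V\subseteq[I_C]_2$ of dimension $\binom{n-1}{2}$ defines a subscheme $X\subseteq\PP^n$ containing $C$ and satisfying $X\cap H=Y$. Since $Y$ is pure one-dimensional, the two-dimensional equidimensional part $T$ of $X$ satisfies $T\cap H=Y$ for general $H$, whence $\deg T=\deg Y=n-1$, the minimal degree for a nondegenerate surface in $\PP^n$. Because $C$ is one-dimensional and contained in $X$, it lies in $T$. The main obstacle is the socle analysis in the case $d\geq 2n$: although the $h$-vector of $\Gamma$ extends past degree $2$, one must rule out any socle contribution in degree $2$, i.e., verify that $\bar A$ is level. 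This is a classical feature of points on a rational normal curve, obtainable from the Veronese presentation of $R_Y$ and the regularity of the defining element inside it, but it requires careful handling. The borderline case $d=2n-1$ would also be critical on socle grounds, which is exactly why the hypothesis insists that $C$ be ACM in that case.
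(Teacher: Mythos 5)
Your proposal is correct and follows essentially the same route as the paper: establish $[I_{\Gamma|H}]_2=[I_{Y|H}]_2$, use the levelness of the artinian reduction of points on a rational normal curve together with the Huneke--Ulrich lifting criterion (Remark \ref{HU conclusion} / Lemma \ref{lift to surf 2}) to lift the quadrics, and treat the borderline case by the ACM hypothesis. The only differences are organizational (you split on $d=2n-1$ versus $d\geq 2n$ where the paper splits on ACM versus non-ACM) and that you supply more detail on the Hilbert-function computation and flag the levelness claim, which the paper simply asserts.
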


\begin{proof}
First assume $C$ is not ACM (and hence  $d\geq 2n$).
Since $\Gamma$ lies on a rational normal curve, the $h$-vector of $\Gamma$ has the form
\[
(1,n-1, n-1, \dots, n-1,c)
\]
where we can suppose that the last nonzero entry is in some degree $r$. Since $Y$ is a rational normal curve, the points of $\Gamma$ are in uniform position. Furthermore, the socle of the artinian reduction $\bar A$ of $S/I_{\Gamma|H}$ is concentrated in degree $r$ (i.e., $\bar A$ is level), and since $\deg C \geq 2n$, we get $r \geq 3$. Then by Corollary \ref{socle r}, $[I_{\Gamma|H}]_2 = [I_{Y|H}]_2$ lifts to $[I_C]_2$, and applying Lemma \ref{lift to surf 2} we get the desired conclusion.

Now we may assume $C$ is ACM with $d\geq 2n-1$ and $\Gamma$ lies in a rational normal curve in $H$. Let $L$ be a linear form defining our general hyperplane $H$. Then $I_{\Gamma|H} \cong \frac{I_C}{L \cdot I_C} \cong \frac{I_C + (L)}{(L)}$ so the lifting from $\Gamma$ to $C$ follows since $C$ and $\Gamma$ (in $H$) have the same Betti table, since $C$ is ACM.
\end{proof}

 The most important situation for us is introduced in the following result.

\begin{corollary} \label{lift ci22}
Let $C \subset \mathbb P^4$ be a nondegenerate curve and assume that a general hyperplane section $\Gamma$ of $C$ lies on a complete intersection curve $Y$ of type $(2,2)$ inside the hyperplane, and $\dim [I_{\Gamma|H}]_2~=~2$.

\begin{itemize}

    \item[(a)] If $C$ is ACM then $C$ lies on a complete intersection surface of type $(2,2)$.

\item[(b)] If $C$ is not ACM, assume that it is either a reduced, irreducible curve or the union $C = C_1 \cup C_2$ of two nondegenerate, reduced, irreducible curves,  and assume that  $\deg C \geq 10$.   Then $C$ lies on an irreducible complete intersection surface of type $(2,2)$.
\end{itemize}
\end{corollary}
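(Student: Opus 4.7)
The plan is to treat (a) and (b) separately, with the hard work concentrated in (b). For part (a), I exploit that $C$ is ACM so a general linear form $L$ is a nonzerodivisor on $R/I_C$, giving surjectivity of the restriction $[I_C]_t \twoheadrightarrow [I_{\Gamma|H}]_t$ in every degree. In degree $2$, this lifts the pencil $[I_{Y|H}]_2 = [I_{\Gamma|H}]_2$ to a two-dimensional subspace $\langle \tilde Q_1, \tilde Q_2\rangle \subset [I_C]_2$. Since the images of $\tilde Q_1, \tilde Q_2$ modulo $L$ generate the height-$2$ ideal $I_{Y|H}$ in $S = R/(L)$, the standard height inequality $\mathrm{ht}(\tilde Q_1,\tilde Q_2)+1 \ge \mathrm{ht}(\tilde Q_1,\tilde Q_2,L) = 3$ forces $\tilde Q_1, \tilde Q_2$ to be a regular sequence in $R$, and their vanishing locus is the desired complete intersection surface of type $(2,2)$ containing $C$; its irreducibility follows from that of its hyperplane section $Y$.

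For part (b), the approach is to invoke Lemma \ref{lift to surf 2} with $t = 2$. The equality $[I_{\Gamma|H}]_2 = [I_{Y|H}]_2$ is immediate since both have dimension $2$ and the latter is contained in the former, and $Y$ is the base locus of the pencil. The remaining input required is that the socle of the artinian reduction $\bar A$ of $S/I_{\Gamma|H}$ begin in degree $> 2$; by Remark \ref{HU conclusion}, this will guarantee that every quadric in $[I_{\Gamma|H}]_2$ lifts to $[I_C]_2$, after which the regular-sequence argument from (a) delivers $T$.

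To verify the socle condition, I would first note that $\dim [I_{\Gamma|H}]_2 = 2$ forces $h_\Gamma(2) = 8$, so the $h$-vector of $\Gamma$ takes the form $(1,3,4,a_3,\ldots,a_r)$. Since $C$ is either irreducible or the union of two nondegenerate, reduced, irreducible curves, Lemma \ref{lgp rmk} places $\Gamma$ in LGP in $H \cong \PP^3$; combining $|\Gamma| = \deg C \ge 10$ with Proposition \ref{min hvtr} (applied with $a_2 = 4$) narrows the $h$-vector to either $r = 3$ with $a_3 \in \{2, 3, 4\}$, or $r \ge 4$ with $a_3 \ge 3$. In the cases $a_3 \ge 3$, a dimension count on the multiplication map $\bar A_2 \otimes \bar A_1 \to \bar A_3$, together with LGP, rules out any degree-$2$ socle elements.

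The main obstacle will be the extremal $h$-vector $(1, 3, 4, 2)$ occurring when $\deg C = 10$. Here my plan is to prove directly that $\bar A$ is a level artinian algebra (all socle in top degree $3$) by exploiting that $\Gamma$ is a Cartier divisor of degree $10$ on the ACM curve $Y$: the Hilbert-function analysis of $\mathcal O_Y(t)|_\Gamma$ together with LGP should force levelness, hence the absence of socle in degree $2$. Once the socle condition is verified in all cases, Lemma \ref{lift to surf 2} yields $T$; irreducibility of $T$ then follows from that of $Y$, with the only potentially reducible configuration $Y = $ twisted cubic $\cup$ chord excluded because it would force $C$ (being irreducible or a union of two nondegenerate irreducibles of high degree) onto a component of $T$ of degree at most $3$, contradicting the hypotheses in the ambient application.
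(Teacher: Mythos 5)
Your overall architecture is the same as the paper's: reduce everything to showing that the socle of the artinian reduction $\bar A$ of $S/I_{\Gamma|H}$ begins in degree $\geq 3$, then apply the Huneke--Ulrich lifting (Remark \ref{HU conclusion} / Lemma \ref{lift to surf 2}) and a regular-sequence argument. The gap is that the socle verification --- which is the entire technical content of part (b) --- is not actually carried out. Your proposed mechanism for the cases $a_3\geq 3$, ``a dimension count on the multiplication map $\bar A_1\otimes\bar A_2\to\bar A_3$,'' cannot work: that map has source of dimension $12$ and target of dimension $a_3\leq 4$, so its kernel always has dimension at least $8$, while a degree-$2$ socle element only contributes a $3$-dimensional subspace $\bar A_1\otimes v$ to the kernel. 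No contradiction can come from dimensions alone. Likewise, for the extremal $h$-vector $(1,3,4,2)$ you say the analysis ``should force levelness,'' which is an unverified assertion, not a proof. The paper's actual argument at this point is liaison-theoretic and depends crucially on first proving that $Y$ is \emph{irreducible} (via LGP plus the monodromy statement of Lemma \ref{lgp rmk}(iii), which rules out the twisted-cubic-plus-line configuration by forcing one of the $C_i$ to be degenerate): irreducibility of $Y$ guarantees that a general form of degree $r$ in $I_{\Gamma|H}$ forms a regular sequence with the two quadrics, so $\Gamma$ links inside a complete intersection of type $(2,2,r)$ to a small residual scheme, and the mapping cone pins down the socle degrees case by case (including the Gorenstein case $(1,3,4,3,1)$, where one instead concludes via \cite{Mig} that $C$ is a complete intersection, hence ACM, contradicting the hypothesis of (b)).

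You do invoke irreducibility of $Y$, but only at the very end and only to get irreducibility of $T$, and there you appeal to ``the hypotheses in the ambient application'' rather than to the hypotheses of the corollary itself; in the paper this irreducibility is established \emph{first} and is the engine of the whole socle computation. Part (a) of your proposal is fine and agrees in substance with the paper (preservation of graded Betti numbers under a general hyperplane section), and the height argument producing the complete intersection surface is a reasonable way to finish. But as written, part (b) has a genuine hole at its central step.
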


\begin{proof}
Assertion (a) is standard since the general hyperplane section preserves the graded Betti numbers. Hence from now on we assume $C$ is not ACM.

Note that if $C$ is reduced and irreducible then $\Gamma$ has the Uniform Position Property (UPP) thanks to \cite{harris}, while if $C$ is the union of two nondegenerate reduced, irreducible curves then $\Gamma$ is at least in LGP, by Lemma \ref{lgp rmk}.  

We first claim that LGP forces $Y$ to be irreducible. Indeed, if $Y$ is reducible then it is either the union of a twisted cubic and a line, or else the union of two or more plane curves. The latter forces the general hyperplane section of $C$ to fail to be in LGP, so without loss of generality assume it is the former. Then Monodromy as in Lemma \ref{lgp rmk}  (iii) forces either $C_1$ or $C_2$ to be a plane curve (so that the general hyperplane section lies on a line), violating the assumption that they are both nondegenerate.

As a result, the $h$-vector of $\Gamma$ must have the value 4 in degree 2, and it must avoid maximal growth $(\dots, a, a, \dots)$ for any $a \leq 3$ by \cite{BGM}. 

 Next we take care of the cases $d = 10, 11, 12$. Thanks to Theorem \ref{Maroscia}, the corresponding $h$-vectors are $(1,3,4,2)$, $(1,3,4,3)$ and either $(1,3,4,4)$ or $(1,3,4,3,1)$. We begin with the first two of these.
Because $Y$ is irreducible, these two possibilities for $\Gamma$ both lie in a complete intersection of type $(2,2,3)$. Then the residual is a set of (respectively) 2 points or 1 point, and by a standard mapping cone argument one gets that the artinian reduction of $S/I_{\Gamma|H}$ has socle only in degree 3. Similarly, if the $h$-vector is $(1,3,4,3,1)$ then the irreducibility of $Y$ and the existence of an independent cubic generator forces $\Gamma$ to be a complete intersection, and then  \cite[Theorem 2.4]{Mig} forces $C$ to be a complete intersection, hence ACM.
For the last case $(1,3,4,4)$ one sees that the fact that $\Gamma$ lies on an irreducible complete intersection of type $(2,2)$ forces the socle to be entirely in degree 3. In all of these cases, $[I_{\Gamma|H}]_2$ lifts to $[I_C]_2$ by Remark \ref{HU conclusion} and we are done. We can now assume that $\deg C \geq 13$.
\medskip

Our goal now is to show that the socle must begin in degree $\geq 3$. Then by Remark \ref{HU conclusion}, all of $[I_\Gamma]_2$ lifts to $I_C$, so the complete intersection curve in $H$ containing $\Gamma$ lifts to a complete intersection surface containing $C$.

The assumption that $\deg C \geq 13$ and the observation above about avoiding maximal growth means, a priori, that the possible $h$-vectors are given in Table \ref{table:possible h}. 
\renewcommand{\arraystretch}{1.2}
\begin{table}[h!] 
\[
\begin{array}{cc|cccccccccccccc}
\multirow{2}{*}{Case}& &\multicolumn{8}{c}{\mbox{degree}} & \\
& & 0 & 1 & 2 & 3 & \dots & r-3 & r-2 & r-1 & r & r+1 \\
\hline
(1) & & 1 & 3 & 4 & 4 & \dots & 4 & 4 & 4 & 4 & 0 \\
(2) & & 1 & 3 & 4 & 4 & \dots & 4 & 4 & 4 & 3 & 0\\
(3) & & 1 & 3 & 4 & 4 & \dots & 4 & 4 & 4 & 2& 0 \\
(4) & & 1 & 3 & 4 & 4 & \dots & 4 & 4 & 4 & 1& 0 \\
(5) & & 1 & 3 & 4 & 4 & \dots & 4 & 4 & 3 & 2 & 0\\
(6) & & 1 & 3 & 4 & 4 & \dots & 4 & 4 & 3 & 1 & 0\\
(7) & & 1 & 3 & 4 & 4 & \dots & 4 & 4 & 2 & 1 & 0 \\
(8) & & 1 & 3 & 4 & 4 & \dots & 4 & 3 & 2 & 1 & 0 
\end{array}
\]
\caption{All possible $h$-vectors.}
\label{table:possible h}
\end{table}

We first eliminate Cases (5), (7) and (8). Since $\Gamma$ is in LGP, Theorem \ref{Maroscia} (Maroscia's theorem) eliminates (7) and (8). Consider (5). Since $Y$ is irreducible, $\Gamma$ lies in a complete intersection of type $(2,2,r-1)$. Such a complete intersection has $h$-vector $(1,3,4,4,\dots,4,3,1)$ that is smaller than (5) in degree $r$, so it cannot exist.

The remaining cases do occur and we claim the following  (remembering that $\deg C \geq 13$).

\renewcommand{\arraystretch}{1.2}
\begin{table}[h!]
\begin{center}
\begin{tabular}{c|llll}
Case & smallest socle degree \\ \hline
(1) & $r \geq 4$  \\
(2) & $r \geq 4$ \\
(3) & $r \geq 3$  \\
(4) & $r-1 \geq 3$ ( $\Rightarrow$ $r \geq 4$) \\
(6) & $r \geq 4$ \ \  (This is a complete intersection.)
\end{tabular}
\end{center}
\caption{Actually occurring cases of the $h$-vector.}
\end{table}

\noindent 

In Case (1) it is clear that the socle is 4-dimensional, occurring in degree $r \geq 4$.

In Case (2), as above  we can link with a complete intersection of type $(2,2,r)$ to a single point and the claim follows as it did with the case $\deg C = 11$. In fact the socle is 3-dimensional, occurring in degree $r \geq 4$.

In Case (3), as above we can link with a complete intersection of type $(2,2,r)$ to a set of two points, and the claim follows as it did with the case $\deg C = 10$.

Case (4) is not symmetric, so it cannot be Gorenstein. Thus there must be more socle than just degree $r$, and the only place it can be is $r-1$, since from degrees 2 to $r-1$ multiplication by a general linear form is, as for the hyperplane section of $Y$, an isomorphism.  But we have $\deg C \geq 13$, so the socle is in degree at least 3. This case does occur: it is the residual in a complete intersection of type $(2,2,r)$ of a set of 3 points.

For Case (6), again the irreducibility of $Y$ gives a complete intersection of type $(2,2,r-1)$ containing $\Gamma$ and having the same $h$-vector, hence $\Gamma$ is a complete intersection,  whose artinian reduction has socle only in degree $r$.

Thus we obtain that $[I_{\Gamma|H}]_2$ lifts to $I_C$  in all cases, thanks to Remark \ref{HU conclusion}, so we are done. 
\end{proof}

\begin{remark}
    We believe that Corollary  \ref{lift ci22} (b) holds also for curves of degree 9, but we have no proof. Notice that the $h$-vector of the general hyperplane section would be $(1,3,4,1)$, which forces socle in degree 2. However, the corollary does not hold for curves of degree 8; for example, a general rational curve of degree 8 lies on no quadric hypersurface (much less a complete intersection of type $(2,2)$) but its general hyperplane section lies on a pencil of quadrics.
\end{remark}

Finally, for future reference we state the following result, presented here in a form adapted to our purposes (recall that in this  paper ``curve" includes an assumption of reducedness):
\begin{lemma} [\cite{Mig}, Theorem 2.4] \label{mig fact}
Let $C$ be a nondegenerate curve in $\PP^4$. Let $Z$ be the scheme $H\cap C$, where $H$ is a general hyperplane. If $Z$ is a complete intersection in $H$, then $C \subset \PP^4$ is a complete intersection.
\end{lemma}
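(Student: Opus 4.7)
The plan is to lift the three generators of $I_{Z\mid H}$ to elements of $I_C$ using Remark \ref{HU conclusion}, and then check that the lifted forms cut out $C$ as a complete intersection in $\PP^4$.

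First I would write $Z$ as the complete intersection in $H \cong \PP^3$ of three hypersurfaces of degrees $a \leq b \leq c$. Since the general hyperplane section of a nondegenerate irreducible curve is itself nondegenerate (and the same holds for our reducible setup since a hyperplane section of a nondegenerate variety spans the hyperplane), $Z$ spans $H$, so no generator of $I_{Z\mid H}$ can be linear. Hence $a \geq 2$. Let $\bar A$ denote the artinian reduction of $S/I_{Z\mid H}$, which is a complete intersection in three variables of type $(a,b,c)$, hence Gorenstein with socle concentrated in the single degree $a+b+c-3$. The key numerical observation is that $a+b+c-3 \geq c+1 > c$, using $a,b \geq 2$.

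Next I would invoke Remark \ref{HU conclusion}: if $C$ were not ACM, the smallest degree in which $I_{Z\mid H}$ contained an element failing to lift to $I_C$ would be at least the smallest socle degree of $\bar A$, namely $a+b+c-3 > c$. Since the three minimal generators $F_1, F_2, F_3$ of $I_{Z\mid H}$ have degrees bounded by $c$, they all lift to elements $\tilde F_1, \tilde F_2, \tilde F_3 \in I_C$ of degrees $a, b, c$. Because $(F_1, F_2, F_3, L)$ is a regular sequence in $R$ (where $L$ is a linear form defining $H$), so is $(\tilde F_1, \tilde F_2, \tilde F_3, L)$, and therefore $(\tilde F_1, \tilde F_2, \tilde F_3)$ is itself a regular sequence in $R$ defining a complete intersection curve $X \subset \PP^4$ of degree $abc$.

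To finish, observe that $C \subseteq X$ since each $\tilde F_i \in I_C$, and $\deg C = \deg Z = abc = \deg X$. As $X$ is Cohen-Macaulay and hence has no embedded components, while $C$ is an equidimensional reduced curve, the degree equality forces $X_{\mathrm{red}} = C$ and then $X = C$ as schemes. Thus $C$ is a complete intersection of type $(a,b,c)$. The main delicate point is the bound $a \geq 2$: this is precisely what makes the socle degree of $\bar A$ strictly exceed the top generator degree $c$, allowing Remark \ref{HU conclusion} to force lifting of all three generators simultaneously. Without nondegeneracy of $C$, one could have a planar hyperplane section, the socle could slip down to $c$ or below, and the argument would collapse.
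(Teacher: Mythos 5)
The paper does not actually prove this lemma; it is quoted verbatim from the reference [Mig, Theorem 2.4], so there is no internal proof to compare against. Your argument is a self-contained derivation built from the paper's own Huneke--Ulrich machinery (Remark \ref{HU conclusion}), and the core of it is sound: the artinian reduction of a complete intersection of type $(a,b,c)$ in $\PP^3$ is Gorenstein with socle concentrated in degree $a+b+c-3$, and once $a,b\geq 2$ this exceeds $c$, so all three generators lift whether or not $C$ is ACM; the lifted forms together with $L$ cut out a scheme of the correct dimension, hence form a regular sequence; and the degree count $\deg C=\deg Z=abc=\deg X$ combined with $X$ being Cohen--Macaulay (so $S_1$, hence reduced once generically reduced) forces $X=C$ as schemes and $I_C=(\tilde F_1,\tilde F_2,\tilde F_3)$, which is saturated. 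This is very likely the same mechanism as in the cited source.

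The one step you should repair is the claim $a\geq 2$. Your parenthetical justification --- ``a hyperplane section of a nondegenerate variety spans the hyperplane'' --- is false for reducible curves: three general lines in $\PP^4$ form a nondegenerate reduced curve whose general hyperplane section is three points spanning only a plane. Since the lemma is stated for an arbitrary nondegenerate reduced equidimensional curve, you cannot simply assert that $Z$ spans $H$. The fix is cheap and uses the same tool: if $a=1$ then $Z$ is a planar complete intersection of type $(1,b,c)$, its artinian reduction is a codimension-two complete intersection with socle in degree $b+c-2$, and since nondegeneracy forces $\deg C=bc\geq 4$ one has $b+c-2\geq 2>1$, so by Remark \ref{HU conclusion} the linear generator lifts to $I_C$, contradicting nondegeneracy of $C$. (Alternatively, in every application the paper makes, $C$ is a union of irreducible nondegenerate curves, each of whose general hyperplane sections already spans $H$ by the Uniform Position Lemma, so the issue does not arise there.) With that patch the proof is complete.
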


\subsection{Restrictions on the $h$-vector}
\label{sec: restrictions on h-vector}

In this section we narrow  down the possible $h$-vectors of $\Gamma = C \cap H = (C_1 \cup C_2) \cap H$ (where $H$ is a general hyperplane) that are candidates for yielding the largest genus for $C = C_1 \cup C_2$ (Proposition \ref{min hvtr} and subsequent lemmas). 

Giuffrida result \eqref{eq: G bound in Pn} applied to reduced, irreducible, nondegenerate curves $C_1, C_2$ in $\PP^4$ (without assuming that they have the same degree  or lie on surfaces of any specific type) gives 
$$|C_1\cap C_2|\leq (d_1-2)(d_2-2) + 2.$$

Assuming without restriction that $d_1\leq d_2$, our Theorem \ref{bd on cubic} and Remark \ref{sing cubic} give a new bound
$$|C_1\cap C_2|\leq \frac{1}{2}d_1(d_2-1),$$
which improves the above bound roughly by the factor of $1/2$, see in particular Table \ref{tab: compare G and POLITUS}. We also know that our bound (stated more carefully in Theorem \ref{bd on cubic}) is sharp, for any $d_1$ and $d_2$, among curves lying on a cubic surface. Now we consider curves not lying on a cubic surface.

\begin{remark} \label{rem:lowdeg}
    The purpose of Proposition \ref{min hvtr}  is to limit the possibilities for the $h$-vector of $C \cap H$, and for us the interesting situation is $\deg C \geq 9$. However, we remark that ignoring the issue of whether $C$ lies on a cubic surface, we also know the possible $h$-vectors when $d \leq 8$ assuming that $C$ is either reduced, irreducible and nondegenerate or the union $C = C_1 \cup C_2$ of two such curves. These $h$-vectors are
    \[
(1,3,4) , \
(1,3,3,1) ,  \
(1,3,3) , \
(1,3,2) , \
(1,3,1) , \
(1,3).
    \]
    Indeed, nondegeneracy gives us $(1,3,\ldots)$ and Maroscia's theorem (Theorem \ref{Maroscia}) excludes the other possibilities. One additional exclusion will be given in Lemma \ref{elim (3,2)}.
    
\end{remark}

\begin{proposition} \label{min hvtr}
Let $C $ be a curve  of degree $\geq 9$ in $\mathbb P^4$ that is either reduced, irreducible and nondegenerate, or the union $C = C_1 \cup C_2$ of two such curves (not necessarily of the same degree). 

\begin{itemize}
    \item If $C$ is irreducible, assume that it does not lie on a surface of minimal degree 3;
    \item if $C = C_1 \cup C_2$, assume that neither $C_1$ nor $C_2$ lies on a surface of minimal degree. 
\end{itemize}

\noindent We do not assume that any  of these curves is ACM.  

Let $\Gamma$ be a general hyperplane section of $C$. 
Assume that the $h$-vector of $\Gamma$ has the form
\[
(1,a_1,a_2,\dots,a_{r-1}, a_{r} )
\]
where $a_r \geq 1$, and the $h$-vector is zero beyond degree $r$. Then we have 

\begin{itemize}
    \item[(a)] $a_1=3$ and $a_2\geq 4$;
    \item[(b)] the following possibilities hold:

\begin{itemize}
\item[\em (i)] $r=2$, $5 \leq a_2 \leq 6$;   

\item[\em (ii)] $r = 3$, $a_2 = 4$, $1 \leq a_3 \leq 4$; 

\item[\em (iii)] $r=3$, $a_2 = 5$, $1 \leq a_3 \leq 7$;

\item[\em (iv)] $r=3$, $a_2 = 6$, $1 \leq a_3 \leq 10$;

\item[\em (v)] $r \geq 4$, $a_k \geq 4$ for $2 \leq k \leq r-2$, and $a_{r-1} \geq 3$.
    
\end{itemize}
\end{itemize} 
\end{proposition}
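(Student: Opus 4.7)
The plan is to combine four ingredients: nondegeneracy of $\Gamma \subset H \cong \PP^3$ (giving $a_1 = 3$), the LGP of $\Gamma$ from Lemma~\ref{lgp rmk}, Macaulay's bound on consecutive entries of an $h$-vector, and the lifting principle that whenever $\Gamma$ (or a large enough monodromy-invariant subset of it) lies on a twisted cubic in $H$, then $C$ or one of its components $C_i$ lies on a cubic surface in $\PP^4$ via Proposition~\ref{rnc-nonacm}, contradicting the hypothesis. For part (a), $a_1 = 3$ is immediate; I would rule out $a_2 = 3$ via Castelnuovo's Lemma (Corollary~\ref{CL}(a)), which puts $\Gamma$ on a twisted cubic and hence (since $\deg C \geq 9 > 2\cdot 4 - 1$) forces $C$ onto a cubic surface by Proposition~\ref{rnc-nonacm}; and I would rule out $a_2 \leq 2$ via Macaulay (forcing all later entries $\leq 2$), combined with $\deg\Gamma \geq 9$ and Maroscia's Theorem (Theorem~\ref{Maroscia}), which would produce either $\geq 2r$ coplanar or $\geq r+1$ collinear points, both forbidden by LGP.

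For part (b), the upper bounds in (i)--(iv) come straight from Macaulay: $a_2 \leq 3^{\langle 1 \rangle} = 6$, and $a_3 \leq a_2^{\langle 2 \rangle}$ gives $5, 7, 10$ for $a_2 = 4, 5, 6$ respectively. The lower bound $a_2 \geq 5$ in (i) comes from $\deg\Gamma \geq 9$. The only non-immediate work is to exclude $a_3 = 5$ when $a_2 = 4$ and $r = 3$, i.e., the $h$-vector $\underline{h} = (1,3,4,5)$ with $|\Gamma| = 13$. Here $\dim[I_\Gamma]_2 = 2$, and the base locus of this pencil of quadrics in $H$ is either a $2$-dimensional plane-plus-line configuration (excluded, since LGP caps such a locus at $5$ points) or a $1$-dimensional complete intersection curve $Y$ of degree $4$. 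If $Y$ is an irreducible elliptic quartic, the Hilbert function comparison $H_\Gamma(3) \leq H_Y(3) = 12 < 13$ is a contradiction. If $Y$ is an irreducible rational quartic, then $h^0(\mathcal I_Y(2)) = 1$ while $\dim[I_\Gamma]_2 = 2$, so some quadric $Q$ through $\Gamma$ does not contain $Y$, and B\'ezout gives $|\Gamma| \leq |Y \cap Q| \leq 8 < 13$. If $Y$ is reducible (twisted cubic plus line, other splittings being excluded by LGP point-counts), at least $11$ of the $13$ points of $\Gamma$ lie on the twisted cubic $T \subset H$; Lemma~\ref{lgp rmk}(i) forces this subset to be one of $\Gamma_1$, $\Gamma_2$, or $\Gamma$, after which Proposition~\ref{rnc-nonacm} (applicable because the corresponding degree is $\geq 11 \geq 7$) places a component of $C$ on a cubic surface, contradicting the hypothesis.

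For case (v), $r \geq 4$: Maroscia's Theorem at level $r-1$ yields $a_{r-1} \geq 3$, since $a_{r-1} = 1$ or $2$ would give $\geq r+1 \geq 5$ collinear or $\geq 2r \geq 8$ coplanar points, both impossible under LGP for $r \geq 4$. Suppose now $a_k \leq 3$ for some $3 \leq k \leq r-2$ (the case $k = 2$ being already handled by part (a)). Values $a_k \leq 2$ propagate under Macaulay to $a_j \leq 2$ for all $j \geq k$, and Maroscia at level $r-1$ again violates LGP. For $a_k = 3$, Corollary~\ref{CL}(b) both forces $a_j = 3$ for $k \leq j \leq r-1$ and exhibits a twisted cubic $T \subset H$ containing at least $3(r-1) + a_r + 1 \geq 11$ points of $\Gamma$; the monodromy-plus-lifting argument of the previous paragraph then places a component of $C$ on a cubic surface, a contradiction.

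The main obstacle I foresee is the exclusion of $\underline{h} = (1,3,4,5)$ in case (ii): the pencil of quadrics through $\Gamma$ degenerates in several geometrically distinct ways, and each must be dispatched separately using Hilbert-function comparison for the elliptic quartic subcase, the uniqueness of the quadric through an irreducible rational quartic, and a careful monodromy-plus-lifting argument for the reducible base loci. Elsewhere the restrictions on $\underline{h}$ follow relatively cleanly from combining Macaulay, Maroscia, Castelnuovo's Lemma (via Corollary~\ref{CL}), and the monodromy/lifting mechanism with the non-cubic-surface hypothesis.
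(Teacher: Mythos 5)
Your proposal is correct and follows the same overall architecture as the paper's proof: nondegeneracy gives $a_1=3$, and the combination of LGP (Lemma~\ref{lgp rmk}), Macaulay's bound, Maroscia's Theorem~\ref{Maroscia}, Castelnuovo's Lemma via Corollary~\ref{CL}, and the lifting mechanism of Proposition~\ref{rnc-nonacm} does the rest. You deviate in two local arguments, both legitimately. First, to exclude $a_3=5$ when $a_2=4$, the paper simply cites \cite[Example 2.11]{BGM} (maximal growth from degree $2$ to $3$ wildly violates LGP), whereas you argue directly from the pencil $[I_\Gamma]_2$: this is self-contained and in fact simpler than you make it, since whenever the two quadrics have no common component their intersection is a complete intersection $(2,2)$ scheme $Y$ with $H_Y(3)=12<13=H_\Gamma(3)$ \emph{regardless} of how $Y$ decomposes, so your rational-quartic and twisted-cubic-plus-line subcases are redundant (and the irreducible rational quartic never arises as such a base locus anyway). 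Second, in case (v) the paper distributes the $\geq 3(r-1)+a_r+1$ points on the twisted cubic between $\Gamma_1$ and $\Gamma_2$ by pigeonhole (forcing $\geq 8$ points of some $\Gamma_i$ on it, then applying Proposition~\ref{rnc-nonacm} to $C_i$), and must then dispose of one leftover terminal case; you instead argue that the subset of $\Gamma$ lying on the twisted cubic is monodromy-invariant, hence equal to $\Gamma$, $\Gamma_1$ or $\Gamma_2$, which cleanly yields the contradiction. The one point you should make explicit there is why that subset is monodromy-invariant: Lemma~\ref{lgp rmk}(i) only identifies the monodromy group, so you need the twisted cubic to be \emph{canonically} determined by $\Gamma$ (it is, as the one-dimensional part of the base locus supplied by the maximal-growth result of \cite{BGM}, so it varies single-valuedly with $H$); without that remark the invocation of the monodromy group does not by itself constrain the subset. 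With that sentence added, your proof is complete and, in the two places where it differs, arguably cleaner than the paper's.
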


\begin{proof}

Note that in  part (b)   we do not assert that the five listed cases do occur, but rather that any other possibility does not occur.
We denote by $H$ the general hyperplane cutting out $\Gamma$. By Lemma \ref{lgp rmk}, $\Gamma$ is in LGP.

\vspace{.1in}

\noindent \underline{Claim 1}: $a_1 = 3$.

\vspace{.1in}

It is a standard fact that $\Gamma$ is nondegenerate in $H = \mathbb P^3$. For instance, since $C$ is nondegenerate  in $\PP^4$, we can take four points on $C$ spanning a hyperplane $H_0$. Since $H_0 \cap C$ is nondegenerate  in $H_0$, then also the general hyperplane section $\Gamma = C \cap H$ is as well. Thus $a_1 = 3$.

\vspace{.1in}

\noindent \underline{Claim 2}: $a_2 \geq 4$. 

\vspace{.1in}

Suppose first that $a_2 \leq 2$. In order to reach a degree of 9 or more, and since by  Macaulay's theorem (see \cite[Theorem 4.2.10 c)]{BrunsHerzog}) all $a_i$ must be $\leq 2$ for $i \geq 3$, we must have $2 \geq a_{r-1} \geq a_r > 0$. Then by Maroscia's \Cref{Maroscia}, we have a contradiction to the LGP property. Thus $a_2 \geq 3$.
\medskip 

Suppose now that $a_2 = 3$. 

 \begin{itemize}
 
\item  If $r=2$, then the  fact that the sum of the entries of the $h$-vector is equal to the degree means $\deg C = \deg \Gamma = 7$, contradicting the assumption that $\deg C \geq 9$. The same argument excludes $(1,3,4)$ so we have (i).
\medskip

\item Assume $r \geq 3$. By Corollary \ref{CL}, $\Gamma$ lies on a twisted cubic curve. The $h$-vector of $\Gamma$ is then
\[
(1,3,3,3, \dots, 3, a_r)
\]
where $1 \leq a_r \leq 3$ and $r \geq 3$.  By Proposition \ref{rnc-nonacm}, $C$ then lies on a surface of minimal degree 3, a contradiction.

\noindent
Therefore we have $a_2 \geq 4$.
\end{itemize}

\noindent This finishes Claim 2. 
  
Now that we know $a_2 \geq 4$, and we have dealt with the case $r=2$ above, we next handle the case $r=3$. We consider the various possibilities for $a_2$ and the consequences for $a_3$.

\begin{itemize}
\item Let $a_2=4$. 
Then Macaulay's theorem gives us $a_3 \leq 5$. Since $r=3$ we get $a_3 \geq 1$ by definition of $r$.  Suppose $a_3 = 5$. Then the growth from degree 2 to degree 3 is maximal according to Macaulay. In fact, this is exactly the content of \cite[Example 2.11]{BGM}, in which it is shown that $\Gamma$ wildly fails to be in LGP (it forces 3 or 4 points on a line and 10 or 9 points on a plane not containing that line). This proves that if $r=3$ and $a_2 = 4$ then $1 \leq a_3 \leq 4$, so we have possibility (ii).

  \vspace{.1in}

\item Let $a_2=5$. Then the statement  $a_3 \leq 7$  is simply Macaulay's Theorem, and the statement $1 \leq a_3$ is the assumption $r=3$. This gives (iii).

  \vspace{.1in}

\item Let $a_2=6$. Then  there is no condition on $a_3$ other than being nonzero, which is the statement of (iv).

\end{itemize}
 
\vspace{.1in}

 So we can now assume $a_2 \geq 4$ and  $r \geq 4$.
Recall that  $ \sum_{i=0}^r a_i = d \geq 9$.

\vspace{.1in}

Assume $r=4$. We only have to show that $a_3 \geq 3$. But if $a_3 \leq 2$, Macaulay's Theorem gives $a_4 \leq 2$ as well, and then we get a violation of LGP thanks to Maroscia's Theorem \ref{Maroscia}. Thus the case $r=4$ is done, and we can assume $r \geq 5$.

\vspace{.1in}

\noindent \underline{Claim 3}: {\it If $r \geq 5$ then $a_i \geq 4$ for $3 \leq i \leq r-2$}.

\vspace{.1in}

We have two possibilities.

\begin{itemize}

\item  $a_i < 3$ for some $i$ in the  range $3 \leq i \leq r-1$. (Note that here we include a statement about $a_{r-1}$ which is not part of this claim but will be used in Claim 4.)

Then we have an immediate contradiction to LGP by Macaulay's theorem and Maroscia's theorem (Theorem \ref{Maroscia}). Indeed, since $i \geq 3$, we get $3 > a_i \geq a_{i+1} \geq a_r > 0$ by Macaulay's theorem, and then a contradiction to LGP by Maroscia's theorem. 

\item  $a_i = 3$ for some $i$ with $3 \leq i \leq r-2$. 

By Corollary \ref{CL} we have at least $3(r-1) + a_r + 1$ of the points on a rational normal curve. 

If $C$ is irreducible then by the Uniform Position Property of Harris \cite{harris} we get that all the points lie on the same rational normal curve in $H = \PP^3$. Then by Proposition \ref{rnc-nonacm}, $C$ lies on a surface of minimal degree, giving a contradiction.

So assume $C = C_1 \cup C_2$ with general hyperplane section $\Gamma = \Gamma_1 \cup \Gamma_2$. We have at least $3(r-1) + a_r +1$ of the points of $\Gamma$ lying on a rational normal curve in $H$ (a twisted cubic). If this number is at least 15 then we have at least 8 points of $\Gamma_1$ or of $\Gamma_2$ on a rational normal curve in $H$, so by Proposition \ref{rnc-nonacm} either $C_1$ or $C_2$ lies on a cubic surface, again giving a contradiction.

The same conclusion also holds (again by  Proposition \ref{rnc-nonacm}) if both curves have degree 7 and at least one is ACM.

The only remaining situation is where  $m=5$, $a_r = 1$ and $3(r-1)+a_r+1 = 14$. Then the $h$-vector of $C$ is $(1,3,3,3,3,1)$. By Castelnuovo's Lemma (Lemma \ref{Cast}), these 14 points lie on a twisted cubic curve. Then even though $C$ is reducible, the artinian reduction of $\Gamma$ clearly has no socle in degree $\leq 3$ so Remark \ref{HU conclusion} shows that $C$ lies on a surface of minimal degree, again a contradiction.
\end{itemize}

This concludes the proof of Claim 3.

\vspace{.1in}

\noindent \underline{Claim 4}: $a_{r-1} \geq 3$.

\vspace{.1in}

It follows immediately from Maroscia's theorem and LGP that $a_{r-1} < 3$ is impossible.  
\end{proof}

\begin{remark}
It is possible to have $a_{r-1} = 3$ and $a_r =1$. For example, this is true if $\Gamma $ is arithmetically Gorenstein.
\end{remark}

\begin{lemma} \label{elim (3,2)}
    Let $\Gamma \subset \PP^3$ be a nondegenerate set of $\geq 13$ points in LGP not lying on a twisted cubic curve. Then the $h$-vector of $\Gamma$ cannot have the form
    \[
    (1,3,\underbrace{4,4,\dots,4}_m ,3,2).
    \]
\end{lemma}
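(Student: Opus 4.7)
The plan is to use a linkage argument. From the prescribed $h$-vector one reads $\mathrm{HF}_\Gamma(2) = 1+3+4 = 8$, so $\dim[I_\Gamma]_2 = \binom{5}{2} - 8 = 2$; let $Q_1, Q_2$ span this pencil of quadrics. LGP together with $|\Gamma|\geq 13$ forces $Q_1$ and $Q_2$ to share no plane component, since otherwise $\Gamma$ would lie in a plane~$\cup$~a line, bounding $|\Gamma|\leq 3+2=5$. Hence $Y:=V(Q_1,Q_2)\subset\PP^3$ is a complete intersection curve of type $(2,2)$, of degree $4$ and arithmetic genus $1$, with $\mathrm{HF}_Y(k)=4k$ for all $k\geq 1$.

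Comparing Hilbert functions, one has $\mathrm{HF}_\Gamma(k)=4k=\mathrm{HF}_Y(k)$ for $1\leq k\leq m+1$, so $[I_\Gamma]_k=[I_Y]_k$ in this range. At $k=m+2$, $\mathrm{HF}_\Gamma(m+2)=4m+7$ while $\mathrm{HF}_Y(m+2)=4m+8$, so $\dim[I_\Gamma]_{m+2}=\dim[I_Y]_{m+2}+1$ and there exists $F\in[I_\Gamma]_{m+2}\setminus[I_Y]_{m+2}$. The central idea is to pick such an $F$ that is a nonzerodivisor modulo $I_Y$: then $(Q_1,Q_2,F)$ is a regular sequence and $\Gamma$ lies in a complete intersection of type $(2,2,m+2)$, of degree $4(m+2)=4m+8$. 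But $|\Gamma|=4m+9>4m+8$, the desired contradiction.

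The heart of the proof is therefore to produce a regular $F$. An $F\in[I_\Gamma]_{m+2}$ fails to be regular modulo $I_Y$ iff it lies in $[I_{Y_j}]_{m+2}$ for some irreducible component $Y_j$ of $Y$. Since $[I_\Gamma]_{m+2}$ is a vector space over an infinite field, and so is not a finite union of proper subspaces, if no regular $F$ existed then $[I_\Gamma]_{m+2}\subseteq[I_{Y_1}]_{m+2}$ for a single component $Y_1\subsetneq Y$. Now classify the reducible possibilities for the CI curve $Y$ of degree $4$: two conics, a conic plus two lines, four lines, or a twisted cubic plus a line. In the first three cases every component is planar or linear, and LGP of $\Gamma$ forces $|\Gamma|\leq 3\cdot(\#\text{planes})+2\cdot(\#\text{lines})\leq 7$, contradicting $|\Gamma|\geq 13$. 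The irreducible case forces $Y_1=Y$, immediately contradicting $F\notin[I_Y]_{m+2}$.

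The main obstacle is therefore the remaining case $Y=Y_1\cup Y_2$ with $Y_1$ a twisted cubic and $Y_2$ a line meeting $Y_1$ in a length-$2$ subscheme (forced by $p_a(Y)=1$). Here LGP gives $|\Gamma\cap Y_2|\leq 2$, so $\Gamma_1:=\Gamma\cap Y_1$ has cardinality at least $4m+7$, while the hypothesis that $\Gamma$ does not lie on a twisted cubic only tells us that $\Gamma\neq\Gamma_1$. To rule this out, one exploits the fact that $\Gamma_1$ sits on a twisted cubic, so that $[I_{\Gamma_1}]_k$ already equals $[I_{Y_1}]_k$ once $k\geq\lceil(|\Gamma_1|-1)/3\rceil$, and then controls the $1$ or $2$ additional points of $\Gamma$ on $Y_2$ through the restriction map $[I_{Y_1}]_k\to H^0(\mathcal{O}_{Y_2}(k))$. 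A direct Hilbert-function computation in degrees $m+2$ and $m+3$ produces, generically, an $h$-vector whose last two entries are inconsistent with $(\ldots,3,2)$; the tightening uses the full strength of LGP together with the standing hypotheses from Proposition~\ref{min hvtr} (in particular that $\Gamma$ is a general hyperplane section so that uniform position and Castelnuovo-type constraints on its subsets on $Y_1$ apply) to close out the remaining possibilities.
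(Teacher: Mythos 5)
Your overall strategy is the same as the paper's: the pencil of quadrics $[I_\Gamma]_2$ cuts out a degree-$4$ curve $Y$ of type $(2,2)$, the ideal of $\Gamma$ agrees with that of $Y$ up to degree $m+1$, a new form $F$ of degree $m+2$ regular modulo $I_Y$ would put $\Gamma$ inside a complete intersection of type $(2,2,m+2)$, and then $|\Gamma|=4m+9>4m+8$ (equivalently, the paper's comparison of the entry $2$ versus $1$ in degree $m+3$ of the two $h$-vectors) gives the contradiction. All of that is sound, and your reduction of ``no regular $F$ exists'' to ``$[I_\Gamma]_{m+2}$ lies inside the ideal of a single component of $Y$'' via the infinite-field/union-of-subspaces argument is a clean way to organize the case analysis; the cases where every component of $Y$ is a line or a plane curve are indeed killed by LGP exactly as you say.

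The genuine gap is the case $Y=T\cup L$ with $T$ a twisted cubic and $L$ a secant line, which you correctly identify as the only real obstruction and then do not prove. The final paragraph of your argument is a sketch, not a proof: ``produces, generically, an $h$-vector whose last two entries are inconsistent'' and ``the tightening \ldots{} to close out the remaining possibilities'' assert the conclusion without establishing it, and moreover you invoke ``the standing hypotheses from Proposition \ref{min hvtr}'' --- that $\Gamma$ is a general hyperplane section with uniform-position constraints --- which are \emph{not} hypotheses of this lemma; the statement assumes only LGP and that $\Gamma$ does not lie on a twisted cubic, and the lemma is applied in Remark \ref{a,b} in exactly that generality. (The paper's own one-line dismissal of reducibility via ``no three points on a line or four in a plane'' is also silent about a twisted-cubic component, so this is precisely the point where an honest argument is owed.) The case can in fact be closed with the tools you already have: since at most $2$ points of $\Gamma$ lie on $L$, at least $4m+7$ lie on $T$, so for $m\geq 3$ one gets $h_\Gamma(m+1)\leq (3(m+1)+1)+2=3m+6<4m+4$, contradicting the prescribed $h$-vector; for $m=1,2$ one must argue directly, e.g.\ by showing that the $\leq 2$ points of $\Gamma$ on $L$ impose independent conditions on $[I_T]_{m+2}$ (using that $[I_T]_2$ restricted to $L$ already surjects onto $H^0(\mathcal{O}_L)$), forcing $h_\Gamma(m+2)=4m+8$ rather than $4m+7$. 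Until an argument of this kind is actually carried out, the proposal does not prove the lemma.
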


\begin{proof}
    There is a pencil of quadrics containing $\Gamma$. The LGP assumption precludes that the base locus of this pencil is a plane, and it precludes that the base locus is reducible (since there are no three points on a line or four in a plane). The ideal consists only of these two quadrics until degree $m+2$, and then the irreducibility of the base locus of the quadrics guarantees that $I$ contains a regular sequence of type $(2,2,m+2)$. But the quotient of such a regular sequence has $h$-vector with value 1 in degree $m+3$, which  is impossible since the $h$-vector of $\Gamma$ has value 2 in degree $m+3$.
\end{proof}

\begin{remark} \label{a,b}
We make a technical calculation, whose relevance will be given in  Remark \ref{summary}. We fix $d$ to be the degree of $C_1 \cup C_2$.

Consider $h$-vectors of the form 
\[
(1,3,\underbrace{4,\dots,4}_m,a,b) 
\]
(where we  allow $b=0$). Let $p$ be the element of $\{1,2,3,4\}$ that is congruent to $d$ mod 4. Then to maximize $g(\underline{h})$ we want the $h$-vector
\[
\left \{
\begin{array}{ll}
(1,3,\underbrace{4,4,\dots, 4}_m , p) & \hbox{if $p \neq 4$ (so  $(a,b) = (p,0)$)}, \\
(1,3,\underbrace{4,4,\dots,4}_{m} , 3,1) & \hbox{if }p=4 \hbox{ (so } (a,b) = (3,1)).
\end{array}
\right.
\]
(We choose $(1,3,\underbrace{4,4,\dots,4}_{m} , 3,1)$  over $(1,3,\underbrace{4,4,\dots, 4}_{m+1} )$ thanks to Corollary~\ref{hvtr}, and we are forced to take $(1,3,\underbrace{4,4,\dots, 4}_m , 1)$ over $(1,3,\underbrace{4,4,\dots,4}_{m-1} , 3,2)$ because of Lemma \ref{elim (3,2)}.)

For example say $d = 20$. Then the $h$-vectors
\[
(1,3,4,4,4,4) \ \ \ \hbox{ and } \ \ \  (1,3,4,4,4,3,1)
\]
are the only possibilities. An easy calculation (see Corollary \ref{hvtr}) shows the second gives the bigger genus.

\end{remark}

\begin{remark} \label{summary}
In this remark we summarize our strategy based on the technical preparatory results given so far. We maintain the notation introduced in this section.

Given irreducible curves $C_1$ and $C_2$, we consider the union $C = C_1 \cup C_2$. Let $\Gamma$ be the general hyperplane section of $C$ and let $\underline{h}$ be the $h$-vector of $\Gamma$, i.e., the first difference of the Hilbert function $h_\Gamma(t)$. We have seen using  Proposition \ref{rosa fact} and \ref{genus formula} and Remark \ref{formal calc} that
\[
|C_1 \cap C_2| \leq  g - g_1 - g_2 +1 \leq g+1 \leq  
\sum_{i=1}^\ell [d - h_\Gamma(i)]  +1 = g(\underline{h}) +1.
\]

Unfortunately we are given only $d_1, d_2$ and consequently their sum $d$. To find an upper bound for $|C_1 \cap C_2|$ (before comparing it to the target $B(d_1,d_2)$) we need to bound $g(\underline{h})$ as $\underline{h}$ varies over all possible $h$-vectors $\underline{h}$ for $\Gamma$. One goal of this section has been to narrow down the possible $\underline{h}$ given our assumptions on $C_1$ and $C_2$. This required extensive preparation. 

The next step has been to find, among all remaining allowable $h$-vectors, the $h$-vector $\underline{h}$ that yields the largest value of $g(\underline{h})$. Using Proposition \ref{min hvtr} and Lemma \ref{elim (3,2)} (see Corollary \ref{hvtr}) and Remark \ref{a,b}, we showed that such an $h$-vector has the form 
\[
(1,3,\underbrace{4,\dots,4}_m,a,b) 
\]
(where we abuse notation and allow $b=0$). Then depending on congruence of $d\pmod{4}$, the biggest theoretical genus for a curve not lying on a cubic surface will come from an $h$-vector ending 
\[
(4,3,1),\; (4,1),\; (4,2) \mbox{ or } (4,3).
\]

\end{remark}

\begin{lemma} \label{genus from hvector1}

Let $C$ be a curve in $\mathbb P^4$ that is either reduced, irreducible, nondegenerate and not lying on a cubic surface, or the union $C_1 \cup C_2$ of two such  curves (not necessarily of the same degree). Let $\Gamma$ be a  general hyperplane section. Suppose the $h$-vector of $\Gamma$ is
\[
(1,3,\underbrace{4,4,\dots, 4}_m , a,b),
\]
where $(a,b)$ are as in Remark \ref{a,b}.  Let $g$ be the arithmetic genus of $C$. Then
\[
g \leq (m+2)b + (m+1)a + 4 \binom{m+1}{2}.
\]
Equality holds if and only if $C$ is ACM.
 If $C$ is ACM and $m \geq 1$ (i.e., $\deg C > 8$) then $C$ lies on an irreducible complete intersection quartic surface. If $C$ is not ACM we assume $\deg C \geq 10$, and again $C$ lies on an irreducible complete intersection quartic surface.
\end{lemma}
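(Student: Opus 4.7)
The plan is to combine the genus formula of Proposition~\ref{genus formula} with the lifting Corollary~\ref{lift ci22}. By Proposition~\ref{genus formula} together with Remark~\ref{formal calc}, one has $g = g(\underline{h}) - k$, where $k\ge 0$ measures the failure of $C$ to be ACM and vanishes precisely when $C$ is ACM. It therefore suffices to compute $g(\underline{h})$ for the $h$-vector $(1,3,\underbrace{4,\dots,4}_m,a,b)$. Summing the entries, one reads off $h_\Gamma(i)=4i$ for $1\le i\le m+1$, $h_\Gamma(m+2)=4(m+1)+a$, and $d=4(m+1)+a+b$. Hence
\[
g(\underline{h})=\sum_{i=1}^{m+1}(d-4i)+b=(m+1)d-2(m+1)(m+2)+b,
\]
and substituting $d=4(m+1)+a+b$ simplifies this to $2m(m+1)+(m+1)a+(m+2)b=4\binom{m+1}{2}+(m+1)a+(m+2)b$, which is the stated bound, with equality precisely when $C$ is ACM.

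For the containment in an irreducible complete intersection quartic surface, the key observation is that $h_\Gamma(2)=1+3+4=8$ whenever $m\ge 1$ (which in the non-ACM case is already forced by the standing hypothesis $\deg C\ge 10$ combined with the shape of the $h$-vector). Hence $\dim[I_{\Gamma|H}]_2=10-8=2$, so $\Gamma$ lies on a pencil of quadrics in $H\cong\PP^3$. I would then verify that this pencil genuinely cuts out a $(2,2)$ complete intersection curve $Y$: if the two generators shared a surface component, that component would be a plane, and then $\Gamma$ would contain at least four coplanar points, contradicting the LGP property of Lemma~\ref{lgp rmk}. Consequently, $Y$ is a degree-$4$ complete intersection curve of type $(2,2)$. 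Corollary~\ref{lift ci22}(a) now handles the ACM case, and Corollary~\ref{lift ci22}(b), under $\deg C\ge 10$, handles the non-ACM case, producing the desired irreducible complete intersection quartic surface $T\supset C$; irreducibility of $T$ is forced by irreducibility of its hyperplane section $Y$, because a nontrivial splitting $T=T_1\cup T_2$ would induce a nontrivial splitting of $Y$.

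The main obstacle, around which the other steps are routine, is verifying that the pencil $[I_{\Gamma|H}]_2$ cuts out an irreducible $(2,2)$ curve rather than having a common surface component or a reducible base locus. Ruling out the shared plane component relies on LGP via Lemma~\ref{lgp rmk}, while the further case analysis ensuring irreducibility of $Y$ is essentially absorbed into the proof of Corollary~\ref{lift ci22} (which itself uses LGP and the hypothesis that neither component lies on a cubic surface). Once these hypotheses are confirmed, the invocation of Corollary~\ref{lift ci22} is immediate, and the genus computation above closes the argument.
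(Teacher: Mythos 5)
Your proposal is correct and follows essentially the same route as the paper: the genus bound is obtained by integrating the $h$-vector and applying Proposition \ref{genus formula} (with equality exactly when the correction term $k$ vanishes, i.e.\ when $C$ is ACM), and the quartic surface is produced by Corollary \ref{lift ci22}(a) and (b). Your extra verification that $\dim[I_{\Gamma|H}]_2=2$ and that the pencil of quadrics has no common plane component is a sound (and slightly more explicit) check of hypotheses that the paper leaves implicit.
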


\begin{proof}
We compute the genus assuming that $C$ is ACM. If it is not, then the genus is lower (see Proposition \ref{genus formula}). From the $h$-vector we get the Hilbert function of the general hyperplane section to be 
\[
(1,4,2(4), 3(4), 4(4), \dots, m(4), (m+1)4, (m+1)4 +a, (m+1)4 + a + b, (m+1)4 + a + b, \dots).
\]
Then the above references give
\[
g \leq (b) + (a+b) + (a+b+4) + (a+b+4(2)) + \dots + (a+b+4(m-1)) + (a+b+4m)
\]
which is equal to the claimed bound, with equality  if and only if $C$ is ACM.

 The last claim follows immediately from Corollary \ref{lift ci22} (a) in the ACM case and Corollary~\ref{lift ci22}(b) in the non-ACM case.
\end{proof}

\begin{remark} \label{lowest h-vector}
The values of the $h$-vector given in the statement of Lemma \ref{genus from hvector1} are term by term the lowest possible given the degree of $C$ and given the fact that $C$ does not lie on a surface of degree 3, and that the general hyperplane section has LGP (Lemma \ref{lgp rmk}). This optimal $h$-vector gives an upper bound for the genus, by Corollary \ref{hvtr}. By Proposition \ref{genus formula}, the maximum genus among curves whose hyperplane section has this $h$-vector comes when the curve is ACM.
\end{remark}

\begin{corollary}\label{cor: genus bound explicit}
Let $g(d)$ be the value obtained from the genus formula (see Remark \ref{formal calc}) applied to the ``extremal" $h$-vector described in Lemma \ref{genus from hvector1} (see also Remark \ref{lowest h-vector}).
And let $\ell$ be the remainder of the division of $d$ by 4. Then we have an explicit formula: 
$$g(d)=\left\{
\begin{array}{ccc}
    \frac{d^2-4d+8}{8} && \ell=0; \\
    \frac{d^2-4d+3}{8}  && \ell =1 \mbox{ or } \ell=3;\\
    \frac{d^2-4d+4}{8} && \ell=2.
\end{array}
\right.
$$
\end{corollary}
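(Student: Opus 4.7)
The plan is to simply substitute the extremal $h$-vectors identified in Remark \ref{a,b} and Lemma \ref{genus from hvector1} into the closed-form expression
\[
g = (m+2)b + (m+1)a + 4\binom{m+1}{2}
\]
from Lemma \ref{genus from hvector1}, and then rewrite everything in terms of $d$ using the relation $d = 1 + 3 + 4m + a + b = 4 + 4m + a + b$. This gives four parallel cases according to $\ell \in \{0,1,2,3\}$, with $(a,b)$ specified as in Remark \ref{a,b}.

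First I would handle the case $\ell = 0$, where $(a,b) = (3,1)$, so $d = 4m+8$, i.e., $m+1 = (d-4)/4$ and $m+2 = d/4$. Substituting yields
\[
g(d) = \tfrac{d}{4} + 3\cdot\tfrac{d-4}{4} + \tfrac{(d-4)(d-8)}{8} = \tfrac{2d + 6(d-4) + (d-4)(d-8)}{8} = \tfrac{d^2-4d+8}{8}.
\]
Next, for $\ell = 2$ where $(a,b) = (2,0)$, we have $m+1 = (d-2)/4$, and a similar substitution gives $g(d) = (d-2)^2/8 = (d^2-4d+4)/8$. The two odd cases $\ell = 1$ (with $(a,b) = (1,0)$, so $m+1 = (d-1)/4$) and $\ell = 3$ (with $(a,b) = (3,0)$, so $m+1 = (d-3)/4$) both simplify to the factored form $(d-1)(d-3)/8 = (d^2-4d+3)/8$; indeed in case $\ell=1$ one gets $\frac{d-1}{4}\bigl(1 + 2\cdot\frac{d-5}{4}\cdot 1\bigr)\cdot\frac{8}{8}$ which reorganizes to $(d-1)(d-3)/8$, and case $\ell=3$ produces the same expression via $(d-3)(d-1)/8$.

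There is no real obstacle here beyond careful bookkeeping: the extremal $h$-vector is pinned down by Remark \ref{a,b}, the formula for $g$ in terms of $(m,a,b)$ is given by Lemma \ref{genus from hvector1}, and the only work is algebraic simplification in four cases. The only point requiring a small remark is the initial boundary cases where $m$ would be zero or negative (very small $d$); one should check that the formula still agrees there, which it does since in those cases the $h$-vector either collapses to $(1,3,a,b)$ or to an even shorter vector, and the formula $g = (m+2)b + (m+1)a + 4\binom{m+1}{2}$ continues to make sense with $m = 0$ and the binomial interpreted as zero.
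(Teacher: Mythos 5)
Your proposal is correct and follows essentially the same route as the paper: substitute the extremal $(a,b)$ from Remark \ref{a,b} into the formula $g=(m+2)b+(m+1)a+2m(m+1)$ of Lemma \ref{genus from hvector1} and express $m$ in terms of $d$. The paper only writes out the $\ell=0$ case explicitly and notes the others are similar, whereas you carry out all four; your algebra checks out in each case.
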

\begin{proof}
Indeed, taking the upper bound from Lemma \ref{genus from hvector1} as 
$$g(d)=(m+2)b+(m+1)a+2m(m+1)$$
we have in the case of the $h$-vector ending $(4,3,1)$ (so $a=3$ and $b=1$) that $m=(d/4)-2$ and the formula follows.

The remaining cases work similarly.    
\end{proof}

\section{The genus bound and the situation when $|d_1 - d_2|$ is small} \label{sec: Bg leq B}

Let $C_1,C_2$ be reduced, irreducible curves in $\PP^4$. Let $g_1, g_2, g$ be the arithmetic genera of $C_1, C_2$ and $C = C_1 \cup C_2$. We know from Proposition \ref{rosa fact} that
\[
|C_1 \cap C_2| \leq g - g_1 - g_2 +1.
\]
If we seek the largest possible value of $|C_1 \cap C_2|$, we can let $g_1 = g_2 = 0$. Then in terms of $g$ this gives an upper bound for $|C_1 \cap C_2|$. Now to find the largest (theoretically) possible value of $|C_1 \cap C_2|$ we use the upper bound $g(d)$ on $g$ worked out in Lemma \ref{genus from hvector1}.

{
\begin{notation}\label{not: genus bound Bg}
    For positive integers $d_1, d_2$ we define the {\it genus bound} $B_g(d_1,d_2)$ by
    \[
    B_g(d_1,d_2) = g(d_1 + d_2)+1.
    \]
\end{notation}
}

\begin{remark}\label{rem: max bg implies acm}
A consequence of the introduced notation is that if
$$|C_1\cap C_2|=B_g(d_1,d_2),$$
then both curves are rational and their union is ACM.
\end{remark}

Conjecture \ref{conjecture b} predicts $B(d_1,d_2)$ as an upper bound for $|C_1 \cap C_2|$. We want to see to what extent the use of $B_g(d_1,d_2)$ helps us prove this. We note that if $B_g(d_1,d_2) \leq B(d_1,d_2)$ then our goal is reached. Unfortunately this is not always the case, and we first make two observations about this situation.

\begin{remark} \label{d1,d2 far apart}
    Without other considerations, the use of $B_g (d_1,d_2)$ has limitations. Indeed, an obvious bound for $|C_1 \cap C_2|$ is $d_1 d_2$, since the number of intersections can only go up if we project to $\PP^2$, and in $\PP^2$, $d_1 d_2$ is obviously the upper bound. However, if $d_1$ and $d_2$ are far apart, $B_g(d_1,d_2)$ can even exceed $d_1 d_2$. For instance, if $d_1 = 30$ and $d_2 = 450$, $d_1d_2 = 13,\hspace{-.02in}500$ but $B_g(d_1,d_2) = 28,\hspace{-.02in}562$ (more than double). In this section we study what happens if $d_1$ and $d_2$ are close together, and later we will study what happens if we make other assumptions about the curves. 
\end{remark}

\begin{remark}
Before proving one of our main results (Theorem \ref{B minus Bg}), but after our detailed analysis of the $h$-vectors, it is useful to step back and summarize our situation.

Our goal is to show that $B(d_1,d_2)$ is an upper bound for $|C_1 \cap C_2|$ in all cases, when both curves are reduced and irreducible. We first showed that it is true when both are on a cubic surface, or even when only one is on a cubic surface, so now we assume neither is on a cubic surface. Although in Section \ref{sec: ACM curves} we will use a completely different method to prove the result for ACM curves, for the most part our approach is the following.

We know that $|C_1 \cap C_2| \leq B_g(d_1,d_2)$ is always true, and when it happens that $B_g(d_1,d_2) \leq B(d_1,d_2)$, we are done. Unfortunately, it can happen that $B_g(d_1,d_2) > B(d_1,d_2)$ -- see for instance Figure \ref{fig:Bg vs B}. This  happens because the bound $B_g(d_1,d_2)$ is an offshoot of Propositions \ref{rosa fact} and \ref{genus formula}, by making the extreme assumptions that both $C_1$ and $C_2$ have arithmetic genus 0 and that $C = C_1 \cup C_2$ is ACM (so $k=0$ in Proposition \ref{genus formula}). 

We believe that given $d_1$ and $d_2$, in all cases that $B_g(d_1,d_2) > B(d_1,d_2)$, the above extreme assumptions cannot happen. However, as Theorem \ref{B minus Bg} and Figure \ref{fig:Bg vs B} illustrate, the reverse inequality often does occur, giving our conjecture in many cases.
    
\end{remark}

In this section we will always assume $d_1 \leq d_2$ and give conditions in terms of $d_2 - d_1$ (and sometimes also $d_1, d_2$ themselves) that guarantee $B_g(d_1,d_2) \leq B(d_1d_2)$. 

We introduce a numerical invariant $M(d_1,d_2)$ with values as in the cases explained in the following table. The order of the entries is based on the approach given in the proof and is not self-evident at this point.

\renewcommand{\arraystretch}{1.3}
\begin{table}[h!] 
\centering
  \begin{tabular}{c|c|c|c|c|c|c|c}
    \diagbox[]{$\alpha=d_1$ mod 4}{$\beta=(d_2-d_1)$ mod 4} & 0 & 1 & 2 & 3  \\
    \hline
    \multirow{2}{*}{0}     & Case I & Case V & Case IX & Case XIII \\
         & $4d_2 - 16$ & $4d_2 - 11$   & $4d_2 - 12$ & $4d_2 - 11$ \\ 
    \hline
    \multirow{2}{*}{1}     & Case X & Case XIV & Case II & Case VI  \\
          & 0 & $4d_1-11$ & $-1$ & $4d_1-11$  \\
    \hline
    \multirow{2}{*}{2}     & Case III & Case VII & Case XI & Case XV \\
    &$4d_2 - 16$&$4d_2 - 11$&$4d_2 - 12$&$4d_2 - 11$\\
    \hline
    \multirow{2}{*}{3}     & Case XII & Case XVI & Case IV & Case VIII \\
      &$0$&$4d_1 - 11$&$- 1$&$4d_1 - 11$
    \end{tabular}
    \bigskip
    \caption{The value of $M(d_1,d_2)$ for Cases I--XVI in Theorem \ref{B minus Bg}}
    \label{conj cases}
\end{table}

\begin{theorem} \label{B minus Bg}
    Let $C_1, C_2$ be reduced, irreducible, nondegenerate curves in $\PP^4$, neither lying on a cubic surface. Assume that $(d_2-d_1)^2 \leq M(d_1,d_2)$ where $M(d_1,d_2)$ is as specified by Table \ref{conj cases}. Then $B_g(d_1,d_2) \leq B(d_1,d_2)$, so for these degrees Conjecture \ref{conjecture b} is true, and then the curves achieving the maximum $B_g(d_1,d_2)$ lie on a quartic surface. Furthermore, if $(d_2-d_1)^2 < M(d_1,d_2)$
    then the number of intersection points is strictly less than $B(d_1,d_2)$.

\end{theorem}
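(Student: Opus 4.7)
The approach is a direct numerical comparison of the closed formulas for $B(d_1,d_2)$ from \eqref{eq: our bound} and for $B_g(d_1,d_2) = g(d_1+d_2)+1$ given by \Cref{cor: genus bound explicit}. The value of $B$ is determined by the parities of $d_1$ and $d_2$, while $B_g$ is determined by $d := d_1 + d_2 \pmod 4$, and both pieces of data are encoded by the pair $(\alpha,\beta) := (d_1 \bmod 4,\; (d_2-d_1)\bmod 4)$. This is precisely what produces the sixteen entries of Table \ref{conj cases}.

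The key claim is that in each case the identity
\begin{equation*}
B(d_1,d_2) - B_g(d_1,d_2) \;=\; \frac{N(d_1,d_2) - (d_2-d_1)^2}{8}
\end{equation*}
holds for an explicit integer $N(d_1,d_2)$, obtained by substituting $d = d_1 + d_2$ in the formula for $B_g$ and using $d_1^2 + d_2^2 = (d_1+d_2)^2 - 2d_1 d_2$ to collapse the cross terms into $-(d_2-d_1)^2$. Moreover the rows $\alpha=0$ and $\alpha=2$ yield identical identities (both give $d_1$ even with $d\equiv\beta\pmod 4$), and similarly $\alpha=1$ and $\alpha=3$ coincide, so only eight computations are actually required. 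For example, in Case I (both $d_i$ even, $d\equiv 0\pmod 4$) one finds
\[
B-B_g \;=\; \frac{d_1(d_2-1)}{2} - \frac{(d_1+d_2)^2-4(d_1+d_2)}{8} - 2 \;=\; \frac{4d_2-16-(d_2-d_1)^2}{8},
\]
giving $N = 4d_2-16$, which matches the tabulated value of $M$. In every case \emph{except} Cases II and IV one has $N = M$, so $(d_2-d_1)^2\leq M$ immediately yields $B_g\leq B$, strictly when the inequality is strict. The always-true bound $|C_1\cap C_2|\leq B_g(d_1,d_2)$ from \Cref{genus bound} together with \Cref{not: genus bound Bg} then completes the numerical part of the theorem.

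The exceptional Cases II and IV (both $d_i$ odd with $d\equiv 0\pmod 4$) are the ones where the calculation yields $N = -4$, i.e.\ $B_g$ genuinely exceeds $B$ by a positive bounded amount for every admissible pair; the table's entry $M = -1$ is just a convenient placeholder signaling that the hypothesis is vacuous in these cases and the $B_g$-route cannot prove \Cref{conjecture b} in that congruence class. In Cases X and XII the value $M=0$ forces $d_1=d_2$, which recovers the both-odd $d_1=d_2$ case of the conjecture.

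For the final assertion that extremal curves lie on a quartic surface: if $|C_1\cap C_2|$ attains $B_g(d_1,d_2)$, then \Cref{rem: max bg implies acm} forces $g_1=g_2=0$ and $C_1\cup C_2$ to be ACM, so its general hyperplane section realizes the extremal $h$-vector identified in \Cref{a,b}. \Cref{genus from hvector1} then places $C_1\cup C_2$ on an irreducible complete intersection quartic surface. The main difficulty in executing the plan is purely organizational: one must select the correct branch of $B$ for each parity combination (remembering that $d_1(d_2-1)/2$ in the both-even case presupposes $d_1\leq d_2$) and the correct branch of $B_g$ according to $d\bmod 4$ across all sixteen pairs, so a single sign or parity slip would invalidate the corresponding table entry.
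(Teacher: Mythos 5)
Your proposal is correct and follows essentially the same route as the paper: a case-by-case arithmetic comparison of $B$ and $B_g$ over the sixteen congruence classes of Table \ref{conj cases}, followed by the lifting results (Corollary \ref{lift ci22} via Lemma \ref{genus from hvector1}) for the quartic-surface statement. The only difference is presentational -- the paper writes $d_1=4u+\alpha$, $d_2-d_1=4k+\beta$ and computes $B-B_g$ as a polynomial in $u,k$, whereas you package the same computation as the identity $B-B_g=\bigl(M(d_1,d_2)-(d_2-d_1)^2\bigr)/8$ (valid outside the vacuous Cases II and IV), which you have verified correctly.
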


\begin{proof}
The statement about curves lying on a quartic surface comes from our results on lifting.

We set $\alpha$ and $\beta$ to be the values on the left and the top of Table \ref{conj cases}, i.e., 
\[
d_1 = 4u + \alpha \ \ (0 \leq \alpha \leq 3)
\]
and
\[
d_2 - d_1 = 4k+ \beta \ \ (0 \leq \beta \leq 3).
\]
Then we have
\[
d_2 = 4u+4k+\alpha + \beta
\]
and
\[
d = d_1 + d_2 = 8u + 4k + 2\alpha + \beta.
\]

Now, we consider sixteen cases depending on $\alpha$ and $\beta$. The values of $B_g$ and $B$ are computed using \eqref{eq: genus bound} and \eqref{eq: our bound} respectively.

\begin{multicols}{2}
\noindent \underline{Case I}:

$(\alpha = \beta = 0$, $\ell=0$)%{\color{blue}$m = 2u+k-2$})

$B_g = 8u^2 + 8uk - 4u - 2k + 2k^2 +2$

$B =  8u^2 + 8uk - 2u$

$B - B_g = 2u+2k-2k^2-2$

\vspace{.1in}

%%%%%%%%%%%%%%%%%%%%%%%%%%%%%%

\noindent \underline{Case II}:

$(\alpha = 1, \beta=2$, $\ell=0$)%$m = 2u+k-1$)

$B_g = 8u^2 + 8uk + 4u + 2k + 2k^2 +1 $

$B = 8u^2 + 8uk + 4u+1$

$B - B_g = -2k-2k^2-1$

\vspace{.1in}

%%%%%%%%%%%%%%%%%%%%%%%%%%%%%%

\noindent \underline{Case III}:

($\alpha = 2, \beta = 0$, $\ell=0$)%$m = 2u+k-1$)

$B_g = 8u^2 + 8uk + 4u + 2k + 2k^2 +2 $

$B = 8u^2 + 8uk + 6u + 4k+1 $

$B - B_g = 2u + 2k - 2k^2 - 1$

\vspace{.1in}

%%%%%%%%%%%%%%%%%%%%%%%%%%%%%%

\noindent \underline{Case IV}:

($\alpha = 3, \beta = 2$, $\ell=0$)%$m = 2u+k$)

$B_g = 8u^2 + 8uk + 2k^2 + 12u + 6k+6$

$B = 8u^2 + 8uk + 12u + 4k + 5$

$B - B_g = -2k-2k^2-1$

\vspace{.1in}

%%%%%%%%%%%%%%%%%%%%%%%%%%%%%%

\noindent \underline{Case V}:

($\alpha = 0, \beta = 1$, $\ell=1$)%$m = 2u+k-1$)

$B_g = 8u^2 + 8uk + 2k^2 -2u - k +1$

$B = 8u^2 + 8uk $

$B - B_g = -2k^2 + 2u + k -1$

\vspace{.1in}

%%%%%%%%%%%%%%%%%%%%%%%%%%%%%%

\noindent \underline{Case VI}:

($\alpha = 1, \beta = 3$, $\ell=1$)%$m = 2u+k$)

$B_g = 8u^2 + 8uk + 2k^2 + 6u + 3k +2$

$B = 8u^2 + 8uk + 8u$

$B - B_g =2u - 3k - 2k^2 -2$

\vspace{.2in}

%%%%%%%%%%%%%%%%%%%%%%%%%%%%%%

\noindent \underline{Case VII}:

($\alpha = 2, \beta = 1$, $\ell=1$)%$m = 2u+k$)

$B_g = 8u^2 + 8uk + 2k^2 + 6u + 3k +2$

$B = 8u^2 + 8uk + 8u + 4k + 2$

$B - B_g = 2u + k - 2k^2 $

\vspace{.1in}

%%%%%%%%%%%%%%%%%%%%%%%%%%%%%%

\noindent \underline{Case VIII}:

($\alpha = 3, \beta = 3$, $\ell=1$)%$m = 2u+k+1$)

$B_g = 8u^2 + 8uk + 2k^2 + 14u + 7k +7$

$B = 8u^2 + 8uk + 16u + 4k + 6$

$B - B_g = 2u - 3k - 2k^2 -1$

\vspace{.1in}

%%%%%%%%%%%%%%%%%%%%%%%%%%%%%%

\noindent \underline{Case IX}:

($\alpha = 0, \beta = 2$, $\ell=2$)%$m = 2u+k-1$)

$B_g = 8u^2 + 8uk + 2k^2 +1$

$B = 8u^2 + 8uk + 2u$

$B - B_g = 2u - 2k^2 -1$

\vspace{.1in}

%%%%%%%%%%%%%%%%%%%%%%%%%%%%%%

\noindent \underline{Case X}:

($\alpha = 1, \beta = 0$, $\ell=2$)%$m = 2u+k-1$)

$B_g = 8u^2 + 8uk + 2k^2 +1$

$B = 8u^2 + 8uk + 1$

$B - B_g = -2k^2$

\vspace{.1in}

%%%%%%%%%%%%%%%%%%%%%%%%%%%%%%

\noindent \underline{Case XI}:

($\alpha = 2, \beta = 2$, $\ell=2$)%$m = 2u+k$)

$B_g = 8u^2 + 8uk + 2k^2 + 8u + 4k + 3$

$B = 8u^2 + 8uk + 10u + 4k + 3$

$B - B_g = 2u - 2k^2$

\vspace{.1in}

%%%%%%%%%%%%%%%%%%%%%%%%%%%%%%

\noindent \underline{Case XII}:

($\alpha = 3, \beta = 0$, $\ell=2$)%$m = 2u+k$)

$B_g = 8u^2 + 8uk + 2k^2 + 8u + 4k + 3$

$B = 8u^2 + 8uk + 8u + 4k + 3$

$B - B_g = -2k^2$

\vspace{.2in}

%%%%%%%%%%%%%%%%%%%%%%%%%%%%%%

\noindent \underline{Case XIII}:

($\alpha = 0, \beta = 3$, $\ell=3$)%$m = 2u+k-1$)

$B_g = 8u^2 + 8uk + 2k^2 + 2u + k + 1$

$B = 8u^2 + 8uk + 4u$

$B - B_g = 2u - 2k^2 - k - 1$

\vspace{.1in}

%%%%%%%%%%%%%%%%%%%%%%%%%%%%%%

\noindent \underline{Case XIV}:

($\alpha = 1, \beta = 1$,  $\ell=3$)%$m = 2u+k-1$)

$B_g = 8u^2 + 8uk + 2k^2 + 2u + k + 1$

$B = 8u^2 + 8uk + 4u$

$B - B_g = 2u - 2k^2 - k - 1$

\vspace{.1in}

%%%%%%%%%%%%%%%%%%%%%%%%%%%%%%

\noindent \underline{Case XV}:

($\alpha = 2, \beta = 3$, $\ell=3$)%$m = 2u+k$)

$B_g = 8u^2 + 8uk + 2k^2 + 10u + 5k + 4$

$B = 8u^2 + 8uk + 12u + 4k + 4$

$B - B_g = 2u - 2k^2 - k$

\vspace{.1in}

%%%%%%%%%%%%%%%%%%%%%%%%%%%%%%

\noindent \underline{Case XVI}:

($\alpha = 3, \beta = 1$, $\ell=3$)%$m = 2u+k$)

$B_g = 8u^2 + 8uk + 2k^2 + 10u + 5k + 4$

$B = 8u^2 + 8uk + 12u + 4k + 4$

$B - B_g = 2u - 2k^2 - k$
\end{multicols}
\vspace{.1in}

%%%%%%%%%%%%%%%%%%%%%%%%%%%%%%

\noindent The conditions in Table \ref{conj cases} are then a simple calculation, replacing $u$ and $k$ by the corresponding expressions in $d_1$ and $d_2$.
\end{proof}

In the case $d_1 = d_2$, we obtain a refinement of Theorem \ref{B minus Bg}, which allows us to establish \Cref{conjecture b} in full.

\begin{proposition}\label{t: bd in P4 same deg}
Let $C_1, C_2$ be nondegenerate, irreducible curves in $\mathbb P^4$ of the same degree $d \geq 6$. Then the number of intersection points of $C_1$ and $C_2$ is bounded by
\[
|C_1 \cap C_2| \leq B(d,d) = 
\left \{
\begin{array}{ll}
\frac{d(d-1)}{2} & \hbox{if $d$ is even}; \\
\frac{(d-1)^2}{2} + 1 & \hbox{if $d$ is odd.}
\end{array}
\right.
\]
Furthermore, suppose that $C_1$ and $C_2$ achieve this bound.

\vspace{.1in}

\begin{itemize}

\item If $d$ is even then $C_1$ and $C_2$ must both lie on the same surface of degree 3 (i.e., the bound is not achieved if $C_1, C_2$ fail to lie on a common cubic surface).

\vspace{.1in}

\item If $d$ is odd, then either

\vspace{.1in}

\begin{itemize}

\item $C_1$ and $C_2$ both lie on a common surface of degree 3 as described in Theorem ~\ref{bd on cubic}, or

\vspace{.1in}

\item $C_1$ and $C_2$ are curves of arithmetic genus 0, $C_1 \cup C_2$ is ACM, neither lies on a cubic surface, and $C_1 \cup C_2$ lies on a complete intersection surface of degree 4.

\end{itemize}

\vspace{.1in}

\noindent Both of these exist for any odd $d$.

\end{itemize}
\end{proposition}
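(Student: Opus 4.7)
The plan is to split along two axes: whether $C_1,C_2$ lie on cubic surfaces, and the parity of $d$. The three geometric configurations—both on a common cubic, exactly one on a cubic, or neither—are controlled respectively by Theorem~\ref{bd on cubic}, Corollary~\ref{cor of C int S}, and Theorem~\ref{B minus Bg}. If both curves share a cubic surface (smooth or a cone), the bound $B(d,d)$ together with its sharpness is exactly the content of Theorem~\ref{bd on cubic} and Remark~\ref{sing cubic}. If exactly one of the two curves lies on a cubic, Corollary~\ref{cor of C int S} gives $|C_1\cap C_2|\leq 2d-1$, and a one-line check shows $2d-1<B(d,d)$ for every $d\geq 6$ in either parity, so this case produces only strict inequality. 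If neither curve lies on a cubic, Theorem~\ref{B minus Bg} applied with $d_1=d_2=d$ yields $|C_1\cap C_2|\leq B_g(d,d)\leq B(d,d)$: here $(d_2-d_1)^2=0$, and from the $\beta=0$ column of Table~\ref{conj cases} one reads $M(d,d)=4d-16$ when $d$ is even and $M(d,d)=0$ when $d$ is odd.

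For $d$ even with $d\geq 6$, the value $M(d,d)=4d-16$ is strictly positive, so the hypothesis $(d_2-d_1)^2<M(d,d)$ holds and Theorem~\ref{B minus Bg} upgrades the above to $|C_1\cap C_2|<B(d,d)$ off the cubic. Combined with the previous paragraph, equality is possible only when $C_1,C_2$ lie on a common cubic surface, which is the claimed characterization.

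For $d$ odd, a direct calculation from Corollary~\ref{cor: genus bound explicit} (equivalently, the $\ell=2$ line of \eqref{eq: genus bound}) gives $B_g(d,d)=B(d,d)$, so the theorem only produces a non-strict bound. Suppose equality is achieved with neither curve on a cubic. Walking backwards through the chain
\[
|C_1\cap C_2|\leq g-g_1-g_2+1\leq g+1\leq g(\underline h)+1 = B_g(d,d)
\]
from Remark~\ref{summary} forces $g_1=g_2=0$, the union $C_1\cup C_2$ to be ACM (Remark~\ref{rem: max bg implies acm}), and the general hyperplane section of $C_1\cup C_2$ to have the unique extremal $h$-vector singled out in Remark~\ref{a,b}. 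Since $\deg(C_1\cup C_2)=2d\geq 14$, Lemma~\ref{genus from hvector1} then places $C_1\cup C_2$ on an irreducible complete intersection quartic surface, giving the second alternative. For existence, specializing $d_1=d_2=d$ in Proposition~\ref{lem: curves on del Pezzo quartic} produces two smooth rational curves of degree $d$ on the smooth del Pezzo quartic in $\PP^4$ meeting in $\tfrac{1}{2}(d-1)^2+1=B(d,d)$ points, whose union is ACM and, being contained in a quartic surface not on a cubic, realizes the second case.

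The main obstacle is the odd-degree equality analysis: one must be sure that equality at each link of the genus chain can be read off independently, and that the extremal $h$-vector is genuinely unique among those allowed by Proposition~\ref{min hvtr} and Lemma~\ref{elim (3,2)}. Both points are already encoded in Section~\ref{sec: restrictions on h-vector}, so in practice this step reduces to careful bookkeeping together with an appeal to Corollary~\ref{lift ci22}(b) to produce the complete intersection quartic surface. The remaining steps are purely numerical and follow from the tables and formulas already established in the paper.
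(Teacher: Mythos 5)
Your overall architecture matches the paper's: Theorem~\ref{bd on cubic} for curves on a common cubic, Corollary~\ref{cor of C int S} for the mixed case, Theorem~\ref{B minus Bg} with $k=0$ for curves off every cubic, and the genus chain plus Remark~\ref{rem: max bg implies acm}, Remark~\ref{a,b} and Corollary~\ref{lift ci22} to extract the ACM/genus-zero/quartic-surface conclusions when $d$ is odd and equality holds. The numerics ($M(d,d)=4d-16>0$ for even $d\geq 6$, $M(d,d)=0$ and $B_g(d,d)=B(d,d)$ for odd $d$, and $2d-1<B(d,d)$ for $d\geq 6$) all check out. Two points need attention. First, a small one: your trichotomy ``both on a common cubic, exactly one on a cubic, or neither'' omits the configuration where both curves lie on cubic surfaces but on no \emph{common} one; this is harmless because Corollary~\ref{cor of C int S} only requires that $C_1\subset S$ and $C_2\not\subset S$ for some cubic $S$, but you should state the middle case that way.

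The genuine gap is in the existence claim for the second alternative when $d$ is odd. You assert that the curves of Proposition~\ref{lem: curves on del Pezzo quartic} realize the quartic-surface case because they are ``contained in a quartic surface not on a cubic,'' but a curve can perfectly well lie on both a quartic surface and a cubic surface, so this does not rule out that the two del Pezzo curves lie on a common cubic --- in which case they would witness only the first alternative. This is precisely the step to which the paper devotes most of its argument: if an irreducible curve of odd degree $d\geq 7$ lay on both an irreducible cubic surface and an irreducible complete intersection quartic surface, its general hyperplane section would lie on a twisted cubic $\Delta_1$ and on a $(2,2)$ complete intersection curve $\Delta_2$ in $\PP^3$ simultaneously; since $\Delta_2$ lies on a pencil of quadrics of which at most one contains $\Delta_1$, a general member of the pencil meets $\Delta_1$ in $6$ points, so $|\Delta_1\cap\Delta_2|\leq 6<d$, a contradiction. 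You need this (or an equivalent argument, e.g.\ computing that the hyperplane section of $C_1\cup C_2$ lies on only a pencil of quadrics) to certify that both alternatives genuinely occur; without it the ``both of these exist'' clause is unproved.
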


\begin{proof}

Note that in this theorem $d$ represents the common degree, not the sum of the degrees. Since $d_1=d_2$, we have always $\beta=0$ and thus in Theorem \ref{B minus Bg}, Case I covers the case where $d \equiv 0 \hbox{ (mod 4)}$, Case X covers $d \equiv 1 \hbox{ (mod 4)}$, Case III covers $d \equiv 2 \hbox{ (mod 4)}$, and Case XII covers $d \equiv 3 \hbox{ (mod 4)}$. So the issue is to show the ``furthermore'' part of the Proposition.
\medskip

If $d$ is even, both  Case I and Case III of Theorem \ref{B minus Bg} give $B_g(d_1,d_2) < B(d_1,d_2)$ since $d_2 \geq 5$,  and we are done. 
\medskip

If $d$ is odd, Case X and Case XII give us $B_g \leq B$, and \Cref{lem: curves on del Pezzo quartic} (in the special case where the degrees are equal) gives the existence of curves achieving the bound.   It remains to show that neither $C_1$ nor $C_2$ is contained in a cubic surface. Consider $C_1$, without loss of generality. If $C_1$ lies on both a cubic surface and a quartic complete intersection surface (both irreducible) then a general hyperplane section of the cubic surface gives a twisted cubic curve $\Delta_1$, and a general hyperplane section of the quartic surface gives a complete intersection quartic curve $\Delta_2$ in $\PP^3$, and $\Delta_1$ and $\Delta_2$  meet in  at least $d$ points in $\PP^3$. Now $\Delta_2$ lies in a pencil of quadrics, and by irreducibility of $\Delta_2$ at most one of these quadrics contains $\Delta_1$. So a general such quadric meets $\Delta_1$ in 6 points, hence the number of intersection points of $\Delta_1$ and $\Delta_2$ is at most 6. In our situation we have that $d$ is odd and $d \geq 6$, so we have a contradiction.
\end{proof}

\section{One of the curves is ACM}\label{sec: ACM curves}

Although we do not prove a global bound for all curves $C_1,C_2$,  in this section we show that $B(d_1,d_2)$ is a global bound when one of the curves is ACM, i.e.,  the first part of Conjecture \ref{conjecture b} is true at least in this setting. The approach here was inspired by the paper of Diaz \cite{diaz}. Since Hartshorne and Mir\'o-Roig established bounds on the number of intersection points in $\PP^3$ for ACM curves \cite{hartshorne2015}, it is worth exploring a generalization of this situation in $\PP^4$. (We note that in \cite{hartshorne2015} the authors did much more than finding a bound given $d_1$ and $d_2$ for ACM curves; they gave their bounds for given $h$-vectors, although they were forced to make other assumptions. Our situation is simpler, but we are in $\PP^4$ rather than $\PP^3$.) Since we have a global bound among curves on a cubic surface (Theorem \ref{bd on cubic}), and Corollary \ref{cor of C int S} shows that  if only one lies on a cubic surface (say $C_1$) then $| C_1 \cap C_2| \leq 2d_2-1$, we will assume here that neither curve lies on a cubic surface.

Note that neither $d_1$ nor $d_2$ can be 4 or 5 by the assumption that neither curve lies on a cubic surface (see Lemma \ref{quintic}).

In the  proof of Theorem \ref{acm curves}  we will use the following standard facts.

\begin{enumerate}
    \item If $C$ is an ACM curve with $h$-vector $(1,3,a_2,\dots,a_s)$ then the regularity of $I_C$ is $s+1$. (For example, if $C$ is a rational normal curve in $\PP^4$ then its $h$-vector is $(1,3)$ and the regularity of $I_C$ is 2.)

    \item If $C$ is any curve and the regularity of $I_C$ is $r$ then $I_C$ is generated in degree $\leq r$ (a result of Castelnuovo -- see \cite[Lecture 14]{mumford}).
\end{enumerate}

Note that if $C$ is not ACM then knowing the regularity of a general hyperplane section of $C$ does not help to find the regularity of $I_C$. This is the reason that we need to assume that one of the curves is ACM in Theorem \ref{acm curves}.
Before coming to that we need a simple general statement on the regularity of ACM curves.
\begin{lemma}\label{lem: upper bound on reg}
Let $C$ be a reduced, irreducible, nondegenerate ACM curve in $\PP^4$ of degree $d$ not contained in a cubic surface. Then   
$$ \displaystyle \reg(I_{C}) \leq \left  \lfloor \frac{d}{4} \right \rfloor +2.$$
\end{lemma}
\begin{proof}
By Lemma \ref{quintic} it must be $d\geq 6$. If $d \in \{ 6,7,8,9 \}$ then the $h$-vector of $C$ must be one of the following
\[
\begin{array}{l}
(1,3,2), \\ 
(1,3,3), \\ 
(1,3,4), \ (1,3,3,1), \\ 
(1,3,3,2), \ (1,3,4,1), (1,3,5)
\end{array}
\]
so the assertion holds.
For $d \geq 10$ and since $C$ is ACM, in order for its $h$-vector to end as late as possible, it must have the form
\[
(1,3,4,\dots,4,a,b)
\]
where $a,b$ are described in Remark \ref{a,b}. (That remark was interested in $C_1 \cup C_2$, but the same argument applies to just to one of the curves.) Then this is a simple computation based on the $h$-vector form to conclude the claim.
\end{proof}

\begin{theorem} \label{acm curves}
    Let $C_1,C_2$ be reduced, irreducible, nondegenerate  curves in $\PP^4$, neither lying on a cubic surface. Assume that one of them is ACM.  Then $| C_1 \cap C_2| \leq B(d_1,d_2)$ (i.e., the first part of Conjecture \ref{conjecture b} holds). 
\end{theorem}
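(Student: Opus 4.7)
The approach, inspired by Diaz's work in $\PP^3$, is to bound $|C_1\cap C_2|$ by intersecting $C_2$ with a general hypersurface of small degree that contains the ACM curve (say $C_1$) but not $C_2$, and then applying B\'ezout's Theorem. By \Cref{quintic} together with the hypothesis that neither curve lies on a cubic surface, we may assume $d_1,d_2\geq 6$.

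The first step is a linear-system B\'ezout bound. Let $\alpha=\alpha(I_{C_1})$ denote the initial degree of $I_{C_1}$ and set $q=\dim[I_{C_1}]_\alpha\geq 1$. For $q-1$ general points of $C_2\setminus C_1$, the subspace of $[I_{C_1}]_\alpha$ vanishing on those points has dimension at least $1$; in the generic situation a general element $F$ of that subspace does not contain $C_2$ as a component, and B\'ezout applied to $F\cap C_2$ yields
\[
|C_1\cap C_2|\ \leq\ \alpha\,d_2 - q + 1.
\]
The degenerate case in which every hypersurface of degree $\alpha$ through $C_1$ also contains $C_2$ forces $C_1\cup C_2$ into the base locus of $[I_{C_1}]_\alpha$; under our hypotheses this base locus must contain a low-degree (necessarily quartic) complete intersection surface through $C_1\cup C_2$, and the bound is obtained from the intersection theory on that surface, or by iterating the argument above at a higher degree.

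The second step uses the ACM hypothesis to control $\alpha$ and $q$ via the $h$-vector of $C_1$. Since $C_1$ is ACM, its $h$-vector $(1,3,a_2,\dots,a_s)$ equals that of a general hyperplane section and determines $\dim[I_{C_1}]_t$ in every degree; in particular $\dim[I_{C_1}]_2=6-a_2$. For $d_1\geq 9$, \Cref{min hvtr} applied to the single irreducible curve $C_1$ gives $a_2\geq 4$. The small cases $d_1\in\{6,7,8\}$ are handled separately by enumerating the few admissible ACM $h$-vectors, discarding those forcing $C_1$ onto a cubic surface, and checking the bound directly. If $a_2\in\{4,5\}$ then $\alpha=2$ and the bound reads $|C_1\cap C_2|\leq 2d_2$, which is at most $B(d_1,d_2)$ by a short comparison with \eqref{eq: our bound} whenever $d_1\geq 6$. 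If instead $a_2=6$ then no quadric contains $C_1$, so $\alpha\geq 3$ and automatically $d_1\geq 10$; one computes $\dim[I_{C_1}]_3=10-a_3$ and iterates to $\alpha=4$ (and beyond) if $a_3$ saturates its Macaulay ceiling. The constraints from \Cref{min hvtr} on the tail of the $h$-vector (namely $a_k\geq 4$ for $2\leq k\leq s-2$ and $a_{s-1}\geq 3$) force $d_1$ to grow considerably faster than $\alpha$, keeping $\alpha d_2 - q + 1$ below $B(d_1,d_2)\sim d_1d_2/2$.

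The main obstacle is the numerical bookkeeping in the regime $a_2=6$: several subcases arise according to the initial degree $\alpha\in\{3,4,\dots\}$ and the parities of $d_1,d_2$, which govern the precise shape of $B(d_1,d_2)$. Each individual comparison is elementary, but the overall proof requires a systematic case analysis combining Macaulay's theorem, the constraints from \Cref{min hvtr}, and the explicit formula for $B$; this is the most tedious part of the argument.
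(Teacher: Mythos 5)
Your proposal is recognizably in the same spirit as the paper's proof (both are B\'ezout arguments driven by the $h$-vector of the ACM curve, and both cite Diaz as inspiration), but you organize everything around the \emph{initial degree} $\alpha(I_{C_1})$ of the ACM curve's ideal, whereas the paper organizes everything around its \emph{Castelnuovo--Mumford regularity}. The difference is not cosmetic: it is exactly where your argument has a gap. The paper sets $A=\lfloor B(d_1,d_2)/d_1\rfloor$, uses the ACM hypothesis and the constraints on the $h$-vector to show $\reg(I_{C_2})\le A$ (here $C_2$ denotes the ACM curve), and then argues by contradiction: if $|C_1\cap C_2|>B(d_1,d_2)\ge d_1A$, B\'ezout forces \emph{every} element of $[I_{C_2}]_A$ to vanish on $C_1$, which is impossible because in degrees at or above the regularity the base locus of the ideal is exactly $C_2$. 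Working at (or above) the regularity is precisely what makes the ``degenerate case'' impossible.

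Your version works at degree $\alpha$, where there is no such control of the base locus, and the degenerate case is where the proof breaks. When every element of $[I_{C_1}]_\alpha$ through the chosen general points of $C_2$ also contains $C_2$, you propose to conclude via ``intersection theory on the quartic complete intersection surface'' containing $C_1\cup C_2$, or by ``iterating at a higher degree.'' Neither is a proof. Bounding $|C_1\cap C_2|$ for irreducible curves on a quartic complete intersection surface is essentially the unresolved content of Conjecture \ref{conjecture b}: Proposition \ref{lem: curves on del Pezzo quartic} exhibits curves on a smooth del Pezzo quartic meeting in exactly $B(d_1,d_2)$ points, the surface arising as the base locus need not be smooth or even normal, and no such bound is established in the paper. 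Iterating at higher degrees runs into the same base-locus problem at every degree below the regularity. There are also smaller unverified claims (the enumeration for $d_1\in\{6,7,8\}$, and the borderline cases such as $(7,7)$ and $(7,9)$, which the paper must dispose of by a separate genus argument using that ACM curves of degree $7$ or $9$ in $\PP^4$ cannot be rational), but the essential missing idea is this: the ACM hypothesis buys you the regularity of the ideal, not merely its initial degree, and the B\'ezout argument must be run at a degree where the base locus is under control.
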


\begin{proof}

Without loss of generality, assume that $C_2$ is ACM.

We first note that if $d_1$ and $d_2$ are both $\leq 9$ then Theorem \ref{B minus Bg} gives $|C_1 \cap C_2| < B(d_1,d_2)$ except for the cases $(d_1,d_2) = (7,7), (7,9)$, $(9,7)$ and $(9,9)$. For $(7,7)$, we can have $B_g = B$ but we need both $C_1$ and $C_2$ to have arithmetic genus 0, which is impossible for ACM curves of degree 7 by Riemann-Roch. The same is true for $(9,9)$. 
When $(d_1,d_2) = (7,9)$ or (9,7), we have $B_g = 26$ and $B = 25$. Achieving $|C_1 \cap C_2| = 26 = B_g$ would require both curves to have arithmetic genus 0. Achieving $|C_1 \cap C_2| = 25$ would require one to have arithmetic genus 0 and the other to have arithmetic genus 1.  In either case we again  use the fact that ACM curves in $\PP^4$ of degree 7 or 9 cannot have arithmetic genus 0. Thus $|C_1 \cap C_2| < B(7,9)$.

From now on we assume that
$C_2$ is ACM, and $ \max \{d_1, d_2 \} \geq~10$.
\medskip

We will prove the assertion first in a couple of special cases and then provide a general argument, which covers all other cases. Let $H$ be a general hyperplane and $Z_2 = C_2 \cap H$.
\medskip

\noindent
\textbf{Case 1.} We assume that $d_2=6$. 

The $h$-vector $(1,3,2)$ plus the fact that $Z_2 $ has the UPP (since $C_2$ is irreducible), force such a set of $6$ points to have an ideal generated by quadrics (as one can verify by a case-by-case argument, using UPP). Hence there exists a quadric hypersurface vanishing on $C_2$ and not $C_1$, so since $d_1 \geq 10$ we have 
$$|C_1 \cap C_2| \leq 2d_1 < 3(d_1-1) = B(d_1,6).$$
\textbf{Case 2.} We assume that $d_2=7$ and that the $h$-vector of $C_2$ is $(1,3,3)$. 

Thus $C_2$ (being ACM) lies in a 3-dimensional vector space of quadrics. If the base locus of this linear system is 1-dimensional then it defines a complete intersection linking $C_2$ to a line. This line is not a component of $C_1$, so there is a quadric hypersurface containing $C_2$ but not $C_1$, and as before 
\[
< 3d_1-2 \leq B(d_1,7)
\]

Now we show that the base locus cannot be of dimension $2$. So we assume to the contrary that the base locus is $2$-dimensional.  The base locus of $[I_{Z_2 |H}]_2$ consists of the seven points of $Z_2$ plus some curve, $Y$, possibly containing some of the points of $Z_2$. The $h$-vector $(1,3,3)$ restricts the possibilities for $Y$ to the following short list:
\begin{itemize}
    \item $Y$ is a line containing $3$ of the points of $Z_2$;
    \item $Y$ is a plane conic containing $5$ of the points of $Z_2$;
    \item $Y$ is a twisted cubic containing all the points of $Z_2$.
\end{itemize}
The first two are impossible because $Z_2$ has LGP (in particular). By the ACM property of $C_2$, in the third case,  $C_2$ must lie on a cubic surface, a contradiction.
\medskip

\noindent
\textbf{Case 3.} We assume that $d_2 = 8$ and that the $h$-vector is $(1,3,3,1)$.

Since $Z_2$ has the UPP as above,  in particular it has the Cayley-Bacharach property. Then by the main result of \cite{DGO}, $Z_2$ is arithmetically Gorenstein, and hence so is $C_2$ since it is ACM.
An arithmetically Gorenstein set of $8$ points in $\PP^3$ is either a complete intersection or else it lies on a smooth twisted cubic curve 
(by \cite[Proposition 9.6 and Remark 9.8]{EP} plus UPP) so $C_2$ is either a complete intersection or else it lies on a cubic surface. We have assumed that the latter is not the case, so $C_2$ is a complete intersection. Thus $I_{C_2}$ is generated in degree 2, so there exists a quadric hypersurface $F$ vanishing on $C_2$ and not containing $C_1$. This means 
$$|C_1 \cap C_2| \leq 2d_1 < B(d_1,8).$$

\noindent
\textbf{Case 4.} We assume that $d_2 = 9$ and the $h$-vector is either $(1,3,4,1)$ or $(1,3,3,2)$.

We deal first with the $h$-vector of $C_2$ being $(1,3,3,2)$. Let $X$ be $Z_2=C_2\cap H$ with one point removed. The $h$-vector of $X$ must be $(1,3,3,1)$. Indeed, the $h$-vector $(1,3,2,2)$ is excluded by Remark \ref{rem:lowdeg}. So there is a $3$-dimensional linear system of quadrics containing $X$ and the same linear system contains all $9$ points in $Z_2$. In particular $X$ cannot be a complete intersection $(2,2,2)$ because then the remaining point would impose a new condition on quadrics. Consequently $X$ lies on a twisted cubic and so does $Z_2$. This implies that $C_2$ lies on a cubic surface, which contradicts assumptions of the Theorem.

Now we assume that the $h$-vector of $C_2$ is $(1,3,4,1)$. So we have a set of 9 points with UPP in $H = \PP^3$ with this $h$-vector. These points lie on a pencil of quadric surfaces in $H$, and by UPP the intersection curve of these surfaces is an irreducible quartic curve $Y$. The $h$-vector forces the ideal of $C_2 \cap H$ to have at least 3 minimal generators of degree 3. We claim that there is no quartic generator. Suppose there were one, and consider the artinian ideal $J$ in $S = \CC[x,y,z]$ generated by the images of the two quadric and three cubic generators. The Hilbert function of $S/J$ would be $(1,3,4,1,1,\dots)$. This means the base locus of the corresponding ideal $I_{C_2 \cap H} \subset \CC[w,x,y,z]$ contains a line which contains at least 4 of the points, violating UPP. It follows that there is a cubic hypersurface in $\PP^4$ containing $C_2$ but not $C_1$, so 
$$
|C_1 \cap C_2| \leq 3d_1 <4(d_1-1)+1  \leq B(d_1,9).
$$

\noindent \textbf{Case 5.} In this case we assume that we are not in any of Cases 1-4. Our argument now involves the regularity $\reg(I_{C_2})$.
We assume that the Theorem is false, i.e.,
$$|C_1 \cap C_2| > B(d_1,d_2)$$
and seek  a contradiction.
Let
\[
A = \left  \lfloor \frac{B(d_1,d_2)}{d_1} \right \rfloor.
\]
\noindent \underline{Claim 1}: $\reg(I_{C_2}) > A$.

\vspace{.1in}

{\color{black} (The following argument for Claim 1 actually holds also in Cases 1--4.)} Let $F \in [I_{C_2}]_A$ be an arbitrary element. Suppose that $F$ does not also vanish on all of $C_1$. Since $F$ vanishes at all the intersection points, we have
\[
|C_1 \cap F | \geq |C_1 \cap C_2| > B(d_1,d_2).
\]
On the other hand, since $F$ does not vanish on all of $C_1$, we have 
\[
|C_1 \cap F| \leq d_1 A \leq B(d_1,d_2)
\]
by B\'ezout's theorem. 
This contradiction proves that  in our situation, $F$ must also  vanish on all of $C_1$. But in degrees $t \geq \reg(I_{C_2})$, the base locus of $[I_{C_2}]_t$ is exactly $C_2$.  Thus $A = \deg(F) < \reg (I_{C_2})$. This proves Claim 1.

\vspace{.2in}

\noindent \underline{Claim 2}: 
$\reg (I_{C_2}) \leq A$.  

{\color{black} One can check by inspection that when $d_2 = 8$ with $h$-vector $(1,3,4)$ and when $d_2 = 9$ with $h$-vector $(1,3,5)$ then Claim 2 is true. So taking into account Cases 1--4, we can assume without loss of generality that $d_2 \geq 10$.
}

We consider four subcases, EE, EO, OE, OO, depending on the parity of $d_1$ and $d_2$. 

\vspace{.1in}

\begin{itemize}
\item[\textbf{EE:}] Assume $d_1, d_2$ both even. 

\vspace{.1in}

\begin{itemize}
    \item Consider $d_1 \leq d_2$. By our assumption we know $d_2 \geq 10$, and by (\ref{eq: our bound}) we have
\[
A = \left \lfloor \frac{B(d_1,d_2)}{d_1} \right \rfloor = \left \lfloor \frac{d_2-1}{2} \right \rfloor \geq \left \lfloor \frac{d_2}{4} \right \rfloor +2 \geq \reg(I_{C_2}),
\]
the latter by Lemma \ref{lem: upper bound on reg}. 

\vspace{.1in}

\item Consider $d_1 \geq d_2$.
 Then 
\[
A = \left \lfloor \frac{B(d_1,d_2)}{d_1} \right \rfloor = \left \lfloor \frac{(d_1-1)d_2}{2d_1} \right \rfloor = \left \lfloor \frac{d_1-1}{d_1} \cdot \frac{d_2}{2} \right \rfloor \geq \left \lfloor \frac{d_2-1}{d_2} \cdot \frac{d_2}{2} \right \rfloor = \left \lfloor \frac{d_2-1}{2} \right \rfloor. 
\]

\noindent Since $d_2 \geq 10$ we get by Lemma \ref{lem: upper bound on reg}
\[
A \geq \left \lfloor \frac{d_2}{4} \right \rfloor +2 \geq \reg(I_{C_2}).
\]

\end{itemize}\vspace{.1in}

\item[\textbf{EO:}]  Assume    $d_1$ even and $d_2$ odd.

\vspace{.1in}

\noindent Since  $d_2 > 10$ we  have 
\[
A =     \left \lfloor \frac{B(d_1,d_2)}{d_1} \right \rfloor  = \frac{d_2-1}{2} \geq \left \lfloor \frac{d_2}{4} \right \rfloor +2 \geq \reg(I_{C_2}).
\]

\vspace{.1in}

\item[\textbf{OE:}] Assume  $d_1$ odd, $d_2$ even. Exactly as in the case $d_1,d_2$ both even and $d_1 \geq d_2$ we have
\[
A \geq \left \lfloor \frac{d_2-1}{2} \right \rfloor.
\]
For all even $d_2 \geq 10$ we get $A \geq \reg(I_{C_2})$ as desired. 

\vspace{.1in}

\item[\textbf{OO:}] Assume $d_1, d_2$ both odd. 
$$
A = \left \lfloor \frac{B(d_1,d_2)}{d_1} \right \rfloor  =   \left \lfloor \frac{(d_1-1)(d_2-1)}{2 d_1} + \frac{1}{d_1} \right \rfloor \geq \left \lfloor  \left ( \frac{d_1-1}{d_1} \right ) \left ( \frac{d_2-1}{2} \right ) \right \rfloor \geq \left \lfloor \frac{6}{7} \cdot \frac{d_2-1}{2} \right \rfloor 
$$
since $d_1$ is odd and $\geq 7$. 
Since $d_2 \geq 10$ one checks that
\[
\left \lfloor \frac{6}{7} \cdot \frac{d_2-1}{2} \right \rfloor  \geq \left \lfloor \frac{d_2}{4} \right  \rfloor +2
\]
and we conclude using Lemma \ref{lem: upper bound on reg}.  
\end{itemize}

The contradiction between Claim 1 and Claim 2 completes the proof.
\end{proof}

\begin{remark}
The reason that four distinguished cases appear in the proof above is that Claim 2 is not true in these cases. For example, If $d_2=6$, then regardless of whether $d_1$ is even or odd, we get $A = \left \lfloor \frac{d_1-1}{d_1} \cdot 3 \right \rfloor = 2$ but $\reg(I_{C_2}) = 3$, so the regularity assertion of Claim~2 fails. Similar direct computations can be made in the remaining cases.
\end{remark}

The arguments used in the proof of Theorem \ref{acm curves} work for certain non-ACM curves as well. Let $C$ be a reduced, irreducible, nondegenerate curve in $\PP^4$ not lying on a cubic surface.  Recall that 
    \[
    M(C) = \bigoplus_{t \in \ZZ} H^1(\PP^4, \mathcal I_C(t))
    \]
    is the  Hartshorne-Rao module of $C$. 
   By \cite{hartshorne} Theorem 5.2, $[M(C)]_t = 0$ for $t \gg 0$, so if $C$ is not ACM then there is a last nonzero component.

    \begin{corollary}
        Let $L$ be a general linear form, defining a hyperplane $H$. Let $C_1$ be a reduced, irreducible  curve of degree $d_1$ in $\PP^4$. Let $C_2 \subset \PP^4$ be a non-ACM reduced, irreducible curve of degree $d_2$.  Assume:
        
        \begin{itemize}
            \item[(a)] neither $C_1$ nor $C_2$ lies on a cubic surface,
            \item[(b)] $d_1 \leq d_2$,
            \item[(c)] $M(C_2)$ has the property that  $\times L$ from the penultimate  component of $M(C_2)$ (possibly zero) to the last (nonzero) component is not surjective.
        \end{itemize} 
        Then $|C_1 \cap C_2| < B(d_1,d_2)$. In particular, this is true for any arithmetically Buchsbaum, non-ACM curve. 
    \end{corollary}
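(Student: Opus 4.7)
The plan is to mimic the argument of Theorem~\ref{acm curves}, with hypothesis~(c) standing in for the ACM assumption on $C_2$. That proof used the $h$-vector of $C_2$ itself to bound $\reg(I_{C_2})$; here I would bound $\reg(I_{C_2})$ via the $h$-vector of the general hyperplane section $\Gamma_2 = C_2 \cap H$, with (c) supplying the bridge between these two regularities.

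The crucial step is the identity $\reg(I_{C_2}) = \reg(I_{\Gamma_2|H})$. Let $e$ be the largest integer with $[M(C_2)]_e \neq 0$. Extracting from the long exact sequence~\eqref{longseq} (with $C$ replaced by $C_2$) the segment
$$
[M(C_2)]_{e-1} \xrightarrow{\times L} [M(C_2)]_e \longrightarrow H^1(\mathcal{I}_{\Gamma_2|H}(e)),
$$
hypothesis~(c) forces the first arrow to be non-surjective, so its cokernel embeds nontrivially into $H^1(\mathcal{I}_{\Gamma_2|H}(e))$. Since $\Gamma_2 \subset H \cong \PP^3$ is zero-dimensional this yields $\reg(I_{\Gamma_2|H}) \geq e + 2$. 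Combined with the general inequalities $\reg(I_{\Gamma_2|H}) \leq \reg(I_{C_2})$ and $\reg(I_{C_2}) \leq \max\{\reg(I_{\Gamma_2|H}),\, e+2\}$ (the upper bound coming from the structure of the minimal free resolution of $I_{C_2}$), this yields the claimed identity.

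The $h$-vector of $\Gamma_2$ satisfies the conclusions of Proposition~\ref{min hvtr} and Lemma~\ref{elim (3,2)} because $C_2$ is nondegenerate, irreducible, and not on a cubic surface. Applying the Claim~2 analysis of the proof of Theorem~\ref{acm curves} to $\Gamma_2$ in place of $C_2$ gives $\reg(I_{\Gamma_2|H}) \leq \lfloor d_2/4 \rfloor + 2$, and the Claim~3 parity casework then yields $\reg(I_{C_2}) \leq A$ with $A := \lfloor B(d_1,d_2)/d_1 \rfloor$. Assuming for contradiction that $|C_1 \cap C_2| \geq B(d_1,d_2)$, the Claim~1 Bezout argument shows that every $F \in [I_{C_2}]_A$ must vanish on $C_1$, forcing $C_1$ into the base locus of $[I_{C_2}]_A$; since $\reg(I_{C_2}) \leq A$ this base locus is exactly $C_2$, contradicting $C_1 \neq C_2$. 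Thus $|C_1 \cap C_2| \leq B(d_1,d_2)$, and this is already strict unless $d_1 \mid B(d_1,d_2)$.

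In the exceptional divisibility cases, equality $|C_1 \cap C_2| = B(d_1,d_2) = d_1 A$ would force $\reg(I_{C_2}) = A$ and pin the $h$-vector of $\Gamma_2$ to the unique extremal shape $(1,3,4,\dots,4,3,1)$, for which the artinian reduction of $S/I_{\Gamma_2|H}$ has socle concentrated in the top degree. Corollary~\ref{socle r} then guarantees that every element of $I_{\Gamma_2|H}$ below that top degree lifts to $I_{C_2}$; chasing this back through the long exact sequence~\eqref{longseq} forces $\times L$ to be surjective at the top of $M(C_2)$, contradicting~(c). The main obstacle of the argument is precisely this last step, which requires carefully threading the extremal casework through the lifting results of Section~\ref{sec: lifting}. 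Finally, the Buchsbaum, non-ACM case is immediate: the irrelevant ideal annihilates $M(C_2)$, so $\times L\colon [M(C_2)]_{e-1} \to [M(C_2)]_e$ is the zero map, automatically non-surjective whenever $[M(C_2)]_e \neq 0$.
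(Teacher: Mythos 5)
Your main argument follows essentially the same route as the paper: use hypothesis (c) to show that the regularity of $I_{C_2}$ is controlled by the regularity of its general hyperplane section $\Gamma_2$, and then rerun Claims 1--3 of the proof of Theorem \ref{acm curves} with the $h$-vector of $\Gamma_2$ (which satisfies the constraints of Proposition \ref{min hvtr} since $C_2$ is irreducible, nondegenerate and not on a cubic surface) in place of the $h$-vector of $C_2$. The paper gets the regularity transfer by chasing the long exact sequence directly (showing $H^1(\mathcal I_{C_2}(e))=H^2(\mathcal I_{C_2}(e-1))=0$ for $e+1=\reg(I_{\Gamma_2|H})$), whereas you invoke the general inequality $\reg(I_{C_2})\leq\max\{\reg(I_{\Gamma_2|H}),\,e+2\}$ with $e$ the top degree of $M(C_2)$; that inequality is true and is proved by exactly the cohomological argument the paper writes out, so this is a presentational difference, not a mathematical one. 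Up through the conclusion $|C_1\cap C_2|\leq B(d_1,d_2)$ your argument is correct, and your observation that hypothesis (c) trivially covers the Buchsbaum case matches the paper.

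The gap is in your final paragraph, where you try to upgrade $\leq$ to $<$ in the case $d_1\mid B(d_1,d_2)$. You claim that the extremal $h$-vector $(1,3,4,\dots,4,3,1)$, having socle concentrated in the top degree $r$, forces $\times L$ to be surjective at the top of $M(C_2)$ via Corollary \ref{socle r}. But Corollary \ref{socle r} (equivalently Remark \ref{HU conclusion}) gives that $[I_{\Gamma_2|H}]_t$ lifts for $t<r$, which in the diagram \eqref{longseq} translates into \emph{injectivity} of $\times L\colon [M(C_2)]_{t-1}\to[M(C_2)]_t$ for $t<r$, not surjectivity at the top. Tracing your own setup through: one gets $e=r-1$, the cokernel of $\times L\colon[M(C_2)]_{e-1}\to[M(C_2)]_e$ injects into $H^1(\mathcal I_{\Gamma_2|H}(r-1))$, which has dimension $a_r=1>0$, so no contradiction with (c) arises --- a one-dimensional Rao module concentrated in degree $r-1$ is perfectly consistent with everything you have derived. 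So this step fails as written. You should be aware, however, that the paper's own proof simply declares itself ``identical to the one for Theorem \ref{acm curves},'' and that theorem only establishes the weak inequality; the strictness asserted in the statement is not actually justified there either. In short: your proof of $|C_1\cap C_2|\leq B(d_1,d_2)$ is sound and matches the paper's; your attempted proof of strictness does not work, and the paper does not supply one.
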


    \begin{proof}

    The last sentence comes immediately from (c) and the fact that by definition, an arithmetically Buchsbaum curve is one for which $\times L \colon [M(C)]_t \rightarrow [M(C)]_{t+1}$ is the zero map for all $t$.
        
   Note that $L$ is a non-zerodivisor on $R/I_{C_2}$. Let $Z_2 = C_2 \cap H$ and by abuse of notation denote by $I_{Z_2}$ the ideal $I_{Z_2|H}$ of $Z_2$ in $H$ (thought of as an ideal in $\CC[x_1, x_2, x_3]$). 
   For any integer $t \in \ZZ$ consider the long exact sequence
    \[
    \begin{array}{c}
    0 \rightarrow [I_{C_2}]_{t-1} \stackrel{\times L}{\longrightarrow} [I_{C_2}]_t \rightarrow [I_{Z_2}]_t \rightarrow [M({C_2})]_{t-1} \stackrel{\times L}{\longrightarrow} [M({C_2})]_t \rightarrow \\
    H^1(\mathcal I_{Z_2}(t)) \rightarrow H^2(\mathcal I_{C_2}(t-1))  \rightarrow H^2(\mathcal I_{C_2}(t))
    \end{array}
    \]
    and let $D$ be the cokernel of $\times L \colon [M({C_2})]_{t-1} \rightarrow [M({C_2})]_t$.

    Let $e = \min \{ t \ | \ h^1(\mathcal I_{Z_2}(t)) = 0 \}$. Notice that 
    
    \begin{enumerate}
    \item For any $t \geq e$, $H^1(\mathcal I_{Z_2}(t)) = 0$.
    
        \item $\reg(I_{Z_2}) = e+1$.

 \item \label{equal zero} $H^2(\mathcal I_{C_2}(t)) = 0$ for $t \gg 0$.

 \item \label{surjection} Since $H^2(\mathcal I_{Z_2}(t)) = 0$ for all $t$ (since $Z_2$ is zero-dimensional), we have a surjection \[\times L \colon H^2(\mathcal I_{C_2}(t-1)) \rightarrow H^2(\mathcal I_{C_2}(t))\ \text{for all}\ t. \]

  \item $H^2(\mathcal I_{C_2}(t-1)) \rightarrow H^2(\mathcal I_{C_2}(t))$ is injective for $t \geq e$, hence (using \ref{equal zero}. and \ref{surjection}.) 
  \[
  H^2(\mathcal I_{C_2}(t-1))~=~0 \ \text{for}\ t \geq e.
  \]
        
 \item  Since $H^1(\mathcal I_{Z_2}(t)) = 0$ for $t \geq e$, we have
        $[D]_t = 0$ for $t \geq e$.
    \end{enumerate}

  But item 6. implies  $\times L \colon [M({C_2})]_{t-1} \rightarrow [M({C_2})]_t$ is surjective for $t \geq e$. By assumption~(c), this means that the last nonzero component of $M(C)$ comes in degree $< e$.
In particular, $H^1(\mathcal I_C(e)) = 0$.

    Now we have $H^2(\mathcal I_C(e-1)) = H^1(\mathcal I_C(e)) = 0$. This implies that $I_C$ has regularity $\leq e+1$. But we saw that  $I_{Z_2}$ has regularity $e+1$, so $\reg(I_{C_2}) \leq \reg(I_{Z_2})$.

    Now the proof of this corollary is identical to the one for Theorem \ref{acm curves}.
    \end{proof}

\section{Analysis of genera: Establishing Conjecture \ref{conjecture b} in additional cases}
\label{sec: beyond Bg}

In the present section we continue to consider two curves, neither of which lie on a cubic surface in $\PP^4$.   
A careful analysis of possible $h$-vectors led us to define the genus bound $B_g(d_1,d_2)$. In Section \ref{sec: Bg leq B} we demonstrated how it can be applied to verify Conjecture \ref{conjecture b} under the assumption that the difference $|d_1-d_2|$ is small. In this section we drop this assumption, thereby broadening the applicability of the genus bound to establish Conjecture \ref{conjecture b} in more cases.

\begin{theorem} \label{case d neq d'}

Let $C_1, C_2$ be nondegenerate, irreducible curves in $\PP^4$ of degrees $d_1 \neq d_2$. 
Assume that neither $C_1$ nor $C_2$ lies on a cubic surface.  Assume $d_1$ and $d_2$ are both even and $d_1 + d_2 \equiv 0 \ (\hbox{\rm mod } 4).$   Finally, assume that the general hyperplane  section $\Gamma$ of $C_1 \cup C_2$ has $h$-vector
\[
(1,3, \underbrace{4,\dots,4}_{m} , 3, 1).
\] 
  Then $|C_1 \cap C_2| < B(d_1,d_2)$ so Conjecture \ref{conjecture b} holds in these cases. 
\end{theorem}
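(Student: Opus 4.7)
The plan is to show first that the hypothesis on the $h$-vector forces $C := C_1 \cup C_2$ to be a complete intersection in $\PP^4$, and then to combine the resulting liaison relation between $C_1$ and $C_2$ with Proposition \ref{rosa fact} to push the naive genus bound $|C_1 \cap C_2| \leq B_g(d_1,d_2)$ strictly below $B(d_1,d_2)$.

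For the first step, observe that the prescribed sequence $(1, 3, \underbrace{4, \ldots, 4}_m, 3, 1)$ is precisely the $h$-vector of a complete intersection of type $(2, 2, m+2)$ in $\PP^3$. Since $\Gamma$ is in LGP by Lemma \ref{lgp rmk} and neither $C_i$ lies on a cubic surface, the analysis in Case~(6) of the proof of Corollary \ref{lift ci22} applies: the pencil of quadrics through $\Gamma$ defines an irreducible complete intersection curve of type $(2,2)$ in $H$, and the symmetric shape of the $h$-vector forces an additional independent generator of $I_{\Gamma\mid H}$ in degree $m+2$, making $\Gamma$ itself a complete intersection of type $(2, 2, m+2)$. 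Lemma \ref{mig fact} then lifts this to $\PP^4$: $C$ must be a complete intersection, and matching generator degrees with those of $I_{\Gamma\mid H}$ fixes its type as $(2, 2, s)$ with $s = m+2 = (d_1+d_2)/4$.

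For the second step, since $C_1$ and $C_2$ are geometrically linked by $C$, the classical liaison formula for arithmetic genera reads
\[
g_1 - g_2 \;=\; \tfrac{1}{2}(d_1 - d_2)(s - 1).
\]
Taking (without loss of generality) $d_1 < d_2$, the constraint $g_1 \geq 0$ yields $g_1 + g_2 \geq \tfrac{1}{2}(d_2 - d_1)(s - 1)$. Because $C$ is a complete intersection it is ACM, so its arithmetic genus equals $g(\underline{h}) = B_g(d_1,d_2) - 1$, and Proposition \ref{rosa fact} gives
\[
|C_1 \cap C_2| \;\leq\; g - g_1 - g_2 + 1 \;\leq\; B_g(d_1, d_2) - \tfrac{1}{2}(d_2 - d_1)(s - 1).
\]

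Finally, I will check the strict inequality $B_g(d_1,d_2) - \tfrac{1}{2}(d_2-d_1)(s-1) < B(d_1,d_2)$ by an explicit calculation using \eqref{eq: our bound}, \eqref{eq: genus bound}, and $4s = d_1 + d_2$. After clearing the common denominator $8$, this reduces to the elementary inequality
\[
(d_2 - d_1)(d_1 - 1) + (d_1 + d_2) - 8 \;>\; 0,
\]
which is immediate since $d_1 \geq 6$ and $d_2 > d_1$. The principal obstacle is the first step, namely forcing the complete intersection structure of $C$ from the $h$-vector of $\Gamma$ alone; once $C$ is known to be a complete intersection, the liaison identity supplies exactly the correction term needed to bring the genus bound strictly below $B(d_1, d_2)$.
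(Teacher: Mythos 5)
Your proposal is correct and follows essentially the same route as the paper's proof: force $\Gamma$ to be a complete intersection of type $(2,2,m+2)$ from the symmetric $h$-vector and LGP, lift via Lemma \ref{mig fact} to conclude $C_1$ and $C_2$ are linked, then combine the liaison genus formula with Proposition \ref{rosa fact} and $g_1\geq 0$ to beat $B(d_1,d_2)$. The only difference is notational (the paper parametrizes by $m$ and $k=d_1/2$ and drops $g_1$ from the bound $N\leq g-g_1-g_2+1$ before estimating, whereas you keep $g_1+g_2$ and work directly with $d_1,d_2,s$); the final elementary inequalities are equivalent.
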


\begin{proof}
Assume $d_1<d_2$. Then $d_1+d_2\ge 16$, so from the $h$–vector we have $m\ge2$, and by Lemma \ref{lgp rmk} the scheme $\Gamma$ is a complete intersection of type $(2,2,m+2)$. By Lemma \ref{mig fact}, $C$ is a complete intersection linking $C_1$ to $C_2$.

Write $g,g_1,g_2$ for the genera of $C,C_1,C_2$. From Lemma \ref{genus from hvector1},
\[
g=2m^2+6m+5,
\qquad
d_1+d_2=4(m+2).
\]
Let $d_1=2k$. Then $d_2-d_1=4(m+2-k)$. The liaison genus formula \cite[Cor.~5.2.14]{MiglioreBook} gives
\[
g_2-g_1=\frac{1}{2}(m+1)(d_2-d_1)=2(m+1)(m+2-k),
\]
so in particular $g_2\ge2(m+1)(m+2-k)$.

Let $N=|C_1\cap C_2|$. By Proposition \ref{rosa fact},
\[
N\le g-g_2+1 \le 2m^2+6m+5 - 2(m+1)(m+2-k) + 1 = 2mk+2k+1.
\]

On the other hand,
\[
B(d_1,d_2)
=
\frac{d_1(d_2-1)}{2}
=
\frac{(2k)(4m+8-2k-1)}{2}
=
4mk+7k-2k^2.
\]
Hence
\[
B(d_1,d_2)-N
\ge k(2m+5-2k)-1.
\]

Since $d_1+d_2>2d_1$ we have $2m+4>2k$, so $2m+5-2k>1$, and therefore
\[
B(d_1,d_2)-N>0.
\]
Thus $C_1$ and $C_2$ cannot meet in $B(d_1,d_2)$ points.
\end{proof}

The next result has an interesting feature. We will give a situation where the lifting result of Huneke and Ulrich does not apply, but we are able to show that ``enough" lifting can be forced to occur so that we still obtain the desired result.

\begin{proposition}\label{prop: B holds odd degrees}
    Let $C_1, C_2$ be reduced, irreducible, nondegenerate curves in $\PP^4$, neither lying on a cubic surface, and assume that $d_2 - d_1 = 4$ and that $d_1 + d_2$ is congruent to 2 (mod 4). Then $|C_1 \cap C_2| \leq B(d_1,d_2)$ so Conjecture \ref{conjecture b} holds in these cases.
\end{proposition}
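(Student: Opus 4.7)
The parity assumptions force $d_1$ and $d_2$ both odd, so I would write $d_1=2k+1$ and $d_2=2k+5$; the standing hypothesis that neither curve lies on a cubic surface, together with \Cref{quintic}, requires $d_1\ge 7$ and hence $k\ge 3$. Then $d=4k+6$, so $\ell=2$ in \Cref{cor: genus bound explicit}, giving $B(d_1,d_2)=2k^2+4k+1$ and $B_g(d_1,d_2)=g(d)+1=2k^2+4k+3=B+2$. Suppose for contradiction that $|C_1\cap C_2|\ge B+1$. By \Cref{rosa fact}, $g_C\ge B+g_1+g_2$, while $g_C\le g(\underline h)\le g(d)=B+1$; hence $g_1+g_2\le 1$. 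A short case check across the $h$-vectors of sum $d$ allowed by \Cref{min hvtr} and \Cref{elim (3,2)} shows that the only one achieving $g(\underline h)\ge B$ is the extremal $\underline h_0=(1,3,\underbrace{4,\ldots,4}_k,2)$; this forces the $h$-vector of $\Gamma=C\cap H$ to be $\underline h_0$ and $C$ to be ACM whenever $g_1+g_2=1$.

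Since $\dim[I_{\Gamma|H}]_2=2$ and $\deg C=4k+6\ge 18$, \Cref{lift ci22} places $C$ on an irreducible complete intersection surface $S\subset\PP^4$ of type $(2,2)$. First assume $S$ is smooth (a del Pezzo quartic with $K_S=-H$, $H^2=4$, and intersection form of signature $(1,5)$ on $\mathrm{Pic}(S)$). Write $L_i=[C_i]$; adjunction gives $L_i^2=2g_i-2+d_i$. I would decompose $L_i=\tfrac{d_i}{4}H+A_i$ with $A_i\cdot H=0$, so that
\[
-A_i^2=\frac{(d_i-2)^2+4-8g_i}{4}>0
\]
for every pair $(d_i,g_i)$ arising here. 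The Hodge index theorem applied to $H^\perp$ (negative definite) then yields
\[
L_1\cdot L_2=\frac{d_1d_2}{4}+A_1\cdot A_2\le\frac{d_1d_2}{4}+\sqrt{(-A_1^2)(-A_2^2)}.
\]

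The next step is a short algebraic verification: substitute $d_1=2k+1$, $d_2=2k+5$ and each of $(g_1,g_2)\in\{(0,0),(1,0),(0,1)\}$, then square the inequality asking the right-hand side to be at most $B+1=2k^2+4k+2$, and check that it holds strictly for all $k\ge 3$. In every case this forces $L_1\cdot L_2\le B$, and since $|C_1\cap C_2|\le L_1\cdot L_2$ this contradicts $|C_1\cap C_2|\ge B+1$. That the bound $B$ itself is achieved on such an $S$ is already established by \Cref{lem: curves on del Pezzo quartic}.

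When $S$ is singular, I would repeat the argument on the minimal resolution $\pi\colon\widetilde S\to S$, using the big and nef class $\pi^*H$ of self-intersection $4$, the strict transforms $\widetilde C_i$ (with $\widetilde C_i\cdot\pi^*H=d_i$ and arithmetic genus at most $g_i$), and the inequality $\widetilde C_1\cdot\widetilde C_2\le C_1\cdot C_2$. The Hodge index inequality on $\widetilde S$ then yields the same contradiction. The hard part will be this last step: one must carefully control the self-intersections $\widetilde C_i^{\,2}$ by tracking how $C_1$ and $C_2$ pass through any singularities of $S$, so that adjunction on $\widetilde S$ still delivers an upper bound on $-A_i^2$ compatible with the algebraic inequality that did the work in the smooth case. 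Everything preceding it is either a direct invocation of tools already proved in the paper or a routine polynomial comparison.
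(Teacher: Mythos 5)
Your overall strategy diverges from the paper's at the decisive step. The paper never invokes intersection theory on the quartic surface at all: after forcing the extremal $h$-vector $(1,3,4,\dots,4,2)$ it observes that at least one of the two cubic-independent forms of degree $m+2$ in $I_{\Gamma|H}$ lifts to $I_C$ (using that the kernel module has dimension $k\le 1$), links $C=C_1\cup C_2$ in a complete intersection of type $(2,2,m+2)$ to a degree-$2$ curve $D$, computes $p_a(D)$ and $p_a(C_1\cup D)=-(m+1)$ via the liaison genus formula, and derives a contradiction from the elementary fact that a union of a genus-$0$ curve and a curve of genus $0$ or $-1$ has arithmetic genus at least $-1$ or $-2$. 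This argument is purely numerical liaison theory and requires no smoothness or normality of any surface. Your opening reductions (parity, $k\ge 3$, $B_g=B+2$, $g_1+g_2\le 1$, the forced extremal $h$-vector, and the passage to an irreducible complete intersection quartic via \Cref{lift ci22}) all match the paper and are correct, and I have checked that your Hodge-index computation in the smooth del Pezzo case does close: for $d_1=2k+1$, $d_2=2k+5$ and $(g_1,g_2)\in\{(0,0),(1,0),(0,1)\}$ one gets $(4k^2+4k+3)^2-(-4A_1^2)(-4A_2^2)>0$ for $k\ge 2$, hence $L_1\cdot L_2<B+1$.

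The genuine gap is exactly where you flag it: the singular case. \Cref{lift ci22} only delivers an \emph{irreducible} complete intersection surface $S$ of type $(2,2)$; nothing guarantees $S$ is smooth, normal, or even has isolated singularities, and nothing prevents $C_1$ and $C_2$ from passing through $\mathrm{Sing}(S)$. On such a surface the classes $[C_i]$ need not be Cartier, adjunction does not hold in the form $L_i^2=2g_i-2+d_i$, and on the minimal resolution the discrepancy between $p_a(\widetilde C_i)$ and $g_i$ and the correction terms in $\widetilde C_i^{\,2}$ both move in directions that can destroy the inequality $-A_i^2>0$ and the final polynomial comparison. Since your argument needs a two-sided control (an upper bound on $L_1\cdot L_2$ coming from a lower bound on each $-A_i^2$), this is not a routine patch; as written the proof only covers the smooth case. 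You would either need to classify the possible singular complete intersection quartics containing such a $C$ and redo the lattice computation on each resolution, or replace this step with an argument that is insensitive to the geometry of $S$ — which is precisely what the paper's liaison computation accomplishes.
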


\begin{proof}
First notice that
the arithmetic assumptions on $d_1$ and $d_2$ force these numbers to be odd. Then,
since we always assume that both curves have degree $\geq 6$ and $d_2 - d_1 = 4$, we have for free that $d_2 \geq 11$.
  Using this fact, we first consider what Theorem~\ref{B minus Bg} gives us
$$B(d_1,d_2)-B_g(d_1,d_2)=-2.$$

\noindent 
We have seen in \Cref{lem: curves on del Pezzo quartic} that two irreducible curves with arithmetic genus 0 exist such that $|C_1 \cap C_2| = B(d_1,d_2)$, so we only need to show that $|C_1 \cap C_2| \leq B(d_1,d_2)$. Specifically, we only have to exclude that $|C_1 \cap C_2| = B_g(d_1,d_2)$ and $|C_1 \cap C_2| = B_g(d_1,d_2)-1$.

Let $C = C_1 \cup C_2$. By definition of $B_g(d_1,d_2)$, the $h$-vector for $\Gamma = C \cap H$ used to compute %$B_g(d_1,d_2)$ 
it is 
\[
(1,3,\underbrace{4,4,\dots,4}_m,2). 
\]
Since we have assumed that neither $C_1$ nor $C_2$ lies on a cubic surface, no other $h$-vector will result in a bound equal to this $B_g(d_1,d_2)$ or $B_g(d_1,d_2)-1$, thanks to Remark \ref{a,b} and \Cref{genus from hvector1}.
So to rule out $|C_1 \cap C_2| = B_g(d_1,d_2)$ or $B_g(d_1,d_2)-1$ we only have to consider the above $h$-vector. 

Notice that there are two elements of $[I_Z]_{m+2}$ (viewed in the coordinate ring of $H = \PP^3$) that are independent of the complete intersection of two quadrics, since the 4 in degree $m+1$ drops to 2 in degree $m+2$.

If $|C_1 \cap C_2| = B_g(d_1,d_2)$, we have seen in \Cref{rem: max bg implies acm} that this forces $C_1$ and $C_2$ to both have arithmetic genus 0, and also it forces $C$ to be ACM.
This latter fact means that both new forms of degree $m+2$ lift to $I_{C}$. Since we know $C$ lies in a complete intersection of two quadrics, $C$ is linked in a complete intersection of type $(2,2,m+2)$ to an ACM curve, $D$, of degree 2. Such a curve necessarily has $h$-vector $(1,1)$, and hence arithmetic genus 0.
This means that $C_1 \cup D$ is linked to $C_2$, so using the genus formula  \cite[Corollary 5.2.14]{MiglioreBook} we obtain
\[
p_a(C_1 \cup D) - 0 = \frac{1}{2} (2+2+(m+2)-5)(d_1+2-d_2) = -(m+1).
\]
But the union of two curves of arithmetic genus 0 at worst has arithmetic genus $-1$ (when the curves are disjoint) so we have a contradiction, and we have excluded $|C_1 \cap C_2| = B_g(d_1,d_2)$.

Suppose $|C_1 \cap C_2| = B_g(d_1,d_2)-1$. From the proof of \Cref{genus formula} we have the following. Let $L$ be a linear form defining our general hyperplane $H$. Then 
\[
\times L \colon M(C)(-1) \rightarrow M(C)
\]
has a kernel, say $A$, that is a submodule of $M(C)$ with total dimension $k$ (as in \Cref{genus formula}). 
Since $p_a(C)$ is one less than the maximal, there are two possibilities:

\begin{enumerate}

\item $C$ is ACM and  one of the curves (say $C_1$) has arithmetic genus 1 rather than 0 (using \Cref{rosa fact}), or

\item Both curves have arithmetic genus 0 but $C$ is not ACM.

\end{enumerate}

The argument for the first case is the same as the one we gave to eliminate $|C_1 \cap C_2| = B_g(d_1,d_2)$, just replacing 0 with 1 for one of the arithmetic genera. So we suppose $C$ is not ACM. 
Since $p_a(C)$ is one less than the maximal, \Cref{genus formula} gives $k = 1$. In particular, $\dim [A]_{m+2} \leq 1$ (note $[A]_{m+2}$ is a subvector space of $[M(C)]_{m+1}$). Consider the long exact sequence
  \[
    0 \rightarrow [I_C]_{m+1} \stackrel{\times L}{\longrightarrow} [I_C]_{m+2} \stackrel{\sigma}{\longrightarrow} [I_{\Gamma|H}]_{m+2} \rightarrow M(C)_{m+1} \stackrel{\times L}{\longrightarrow} M(C)_{m+2} \rightarrow \cdots
    \]
The cokernel of $\sigma$ is precisely $[A]_{m+2}$, which we have seen has dimension $\leq 1$. Since $I_{\Gamma}$ contains two elements of degree $m+2$ that are independent of the quadrics, this means that at least one of them lifts to $C$. Thus again we have a link of type $(2,2,m+2)$, linking $C$ to a curve $D$ of degree 2 (no longer ACM). 

We next compute the arithmetic genus of $D$. Note that the degree of $C$ is $d_1+d_2 = 4m+6$ and \Cref{genus from hvector1} gives us that the genus $g$ obtained from our $h$-vector (before subtracting $k=1$) is
\[
g = 2(m+1) + 4\binom{m+1}{2}.
\]
The genus formula for the residual now gives 
\[
(g-1) - p_a(D) = \frac{1}{2} (2+2+(m+2))-5)(4m+4)
\]
so after a calculation we get $p_a(D) = -1$.
Arguing as before, we have a link of $C_1 \cup D$ to $C_2$ and $p_a(C_1 \cup D) = -(m+1)$
But the union of a curve of arithmetic genus 0 and one of arithmetic genus $-1$ has arithmetic genus at worst $-2$, so again we have a contradiction.
\end{proof}

\section{Open questions} \label{sec: open questions}

Several questions naturally arise as a result of this work. Among them:

\begin{enumerate}

\item Can we prove Conjecture \ref{conjecture b}? This would give a global upper bound for the number of intersection points and completely describe when the bound is achieved. 

\item Can we give a bound when the given information consists of pairs $(d_1,g_1), (d_2,g_2)$ (where $g_i$ are the arithmetic genera)? We have seen that this has an important effect on the bound.
    
\item What happens in $\PP^n$? Can we find a sharp upper bound on the number of intersection points among curves lying on a surface of minimal degree? Can we describe the curves in $\PP^n$ that achieve the bound, whether or not they lie on such a surface? In particular, must they lie on a surface of minimal degree in order to achieve it?

\item Back to $\PP^3$, what if $C_1$ and $C_2$ do not lie on a surface of given degree $e$? Can we describe sharp bounds in terms of $d_1, d_2$ and $e$?

\item 

Can the approaches developed in Section \ref{sec: beyond Bg} be applied to tackle more cases?

\item In any projective space, can we restrict to special kinds of curves, as was done in Theorem \ref{acm curves} where one of the curves is ACM in $\PP^4$ and in \cite{hartshorne2015} for ACM curves in $\PP^3$, and produce interesting bounds?

\item These ideas can be extended beyond curves. For example, in $\PP^4$, in how many points can a surface $S$ of degree $d_1$ meet a curve $C$ of degree $d_2$? One such  result is Lemma \ref{C int S}. It was shown in \cite{POLITUS4} that they can meet in $d_1 d_2$ points, but (as is the case for curves in $\PP^3$) apart from trivial cases, $C$ and $S$ must be reducible (and a partial description of this situation was given). The paper \cite{POLITUS4} focused primarily on sets of $d_1d_2$ points in $\PP^4$ for which the general projection to $\PP^3$ is the full intersection of a surface of degree $d_1$ and a curve of degree $d_2$, but one can also ask for an upper bound for the number of intersection points in $\PP^4$ of an irreducible surface $S$ of degree $d_1$ and an irreducible curve $C$ of degree $d_2$.  Assume $C$ and $S$ are irreducible, meeting in $N$ points. Project from a general point of $C$. The image is a curve $C'$ of degree $d_2 -1$ and a surface of degree $d_1$. So $N \leq d_1 (d_2-1)$. Can this be improved? 

\end{enumerate}

\vspace{.3in}

\noindent {\bf Declaration of generative AI and AI-assisted technologies in the manuscript preparation process}:
During the preparation of this work the authors used ChatGPT in order to draft initial MATLAB scripts for the creation of the figures in the introduction; these initial scripts were subsequently reviewed and modified by the authors.
\medskip

\noindent {\bf Acknowledgements:}
Chiantini and Favacchio are members of the Italian GNSAGA-INDAM. Chiantini received partial financial support by the Italian PRIN 2022 funds. Favacchio was supported by the funding PREMIO\_SINGOLI\_RIC\_[2025] from the Department of Engineering, University of Palermo.
Migliore was partially supported by Simons Foundation grant \#839618.

\bibliographystyle{siam}

\end{document}